\numberwithin{equation}{section}
\newcommand{\eps}{\varepsilon}
\newtheorem{theorem}{Theorem}[section]
\newtheorem{lemma}{Lemma}[section]
\newtheorem{corollary}{Corollary}[section]
\newtheorem{proposition}{Proposition}[section]
\newtheorem{conjecture}{Conjecture}[section]
\title{Uniqueness of traveling fronts in premixed flames with stepwise ignition-temperature kinetics and fractional reaction order.   }
\author{ Amanda Matson
\thanks{Department of Mathematical Sciences, 
Kent State University,
 Kent, OH 44242, USA. E-mail: {\tt amatson2@kent.edu }}
\and  Claude-Michel Brauner
\thanks{Institut de Math\'ematiques de Bordeaux UMR CNRS 5251, Universit\'e de Bordeaux, 33405 Talence, France.  E-mail:
{\tt claude-michel.brauner@u-bordeaux.fr}}
\and Peter  V. Gordon
\thanks{Department of Mathematical Sciences, 
Kent State University,
 Kent, OH 44242, USA. E-mail: {\tt gordon@math.kent.edu}}
 }
\begin{document}
\maketitle

\begin{abstract}
In this paper, we consider a reaction-diffusion system describing the propagation of flames under the assumption of ignition-temperature kinetics and fractional reaction order.  
It was shown in \cite{CMB21} that this system admits a traveling front  solution. In the present work,  we show that this  traveling front  is unique up to translations.
We also study some qualitative properties of this solution using the combination of  formal asymptotics and numerics.  Our findings allow conjecture that the velocity of the propagation
of the flame front is a decreasing function of all of the parameters of the problem: ignition temperature, reaction order and an inverse of the Lewis number.
\end{abstract}

\medskip

\noindent {\bf Keywords:} Reaction-diffusion systems, traveling front solution, uniqueness of solution, qualitative dependency of solution on parameters.

\bigskip

\noindent {\bf AMS subject classifications:} 35K57, 35C07, 34B08, 34E05, 80A25.

\section{Introduction}

The canonical constant density approximation model of flame propagation in one dimensional formulation  reads \cite{BNS85,ZBLM,Will,Law}:
\begin{eqnarray}\label{eq:i1}
\left\{ 
\begin{array}{l}
T_{\bar t}=T_{\bar x\bar x}+\Omega(T,C), \\
C_{\bar t}=\Lambda C_{\bar x \bar x}-\Omega(T,C),
\end{array}
\right.
\end{eqnarray}
where $T$ and $C$ are appropriately normalized temperature and concentration of the deficient reactant, $\bar x\in \mathbb{R},$~ $\bar{t}>0$ are normalized  spatiotemporal coordinates, $\Lambda>0$ is an inverse of  the Lewis number and $\Omega$ is the reaction rate.
The reaction rate is typically specified as: 
\begin{eqnarray}\label{eq:i2}
\Omega(T,C):=
\left\{
\begin{array}{llll}
C^{\alpha} F(T) & T\ge \theta & \mbox{and}&  C>0,\\
0 & T<\theta  & \mbox{and/or} & C=0,
\end{array}
\right.
\end{eqnarray}
where $\alpha\ge0$ is the reaction order, $\theta \in (0,1)$ is the ignition temperature  and $F(T)$ is a positive non-decreasing function that characterizes the enhancement of chemical reaction with the increase 
of the temperature.

 The studies of system \eqref{eq:i1} trace back to pioneering  works of Frank-Kamenetskii,  Semenov and Zeldovich  in 1930's and 1940's  \cite{ZBLM,FK,Sem}. This system was then  analyzed by mathematicians and physicists alike. 
 The substantial body of   literature  concerning  system \eqref{eq:i1} is dedicated to the analysis of traveling front solutions,  that is special solutions of the form:
 \begin{eqnarray}\label{eq:i3}
 T(\bar x, \bar t):= u(\xi), \quad C(\bar x,\bar t):= v(\xi), \quad \xi=\bar x+c \bar t,
 \end{eqnarray} 
 where $c>0$ is an a-priori unknown speed of propagation.
 In a context of combustion, such solutions represent flame fronts  propagating  with a constant speed from burned state far to the right to unburned state far to the left.

When $\alpha\ge 1$  system \eqref{eq:i1}, after substitution of an ansatz \eqref{eq:i3}, reduces to the following system of ODE's on a real line:
\begin{eqnarray}\label{eq:i4}
u_{\xi\xi}-cu_{\xi}+\Omega(u,v)=0, \quad \Lambda v_{\xi\xi}-cv_{\xi}-\Omega(u,v)=0, \quad \xi\in \mathbb{R},
\end{eqnarray}
complemented with boundary like conditions:
\begin{eqnarray}\label{eq:i5}
u \to 1, \quad v\to 0 \quad \mbox{as} \quad \xi \to \infty,\quad \mbox{and} \quad
u \to 0, \quad v\to 1 \quad \mbox{as} \quad \xi \to -\infty.
\end{eqnarray}
 Conditions \eqref{eq:i5} prescribe the steady temperature and reactant concentration far ahead ($\xi\to- \infty$) and far behind  ($\xi \to \infty$) the flame front.  

Since solutions of  \eqref{eq:i4}, \eqref{eq:i5} are translationally invariant, we fix translations by imposing a constraint:
\begin{eqnarray}\label{eq:i6}
u(\xi_{ign})=\theta,
\end{eqnarray}
where $\xi_{ign}$ is an arbitrary  fixed number.

The constraint \eqref{eq:i6}  fixes the position of an
 {\it ignition interface},  the unique  position where the temperature is equal to the ignition temperature $\theta$.  Hence, when crossing the ignition interface, the reaction rate jumps from some positive value to zero while preserving continuity of the temperature and concentration of reactant as well as
 their fluxes. 
  Consequently, the mixture ahead of the ignition interface ($\xi<\xi_{ign}$) is in a non-reactive state,  whereas at and behind the ignition interface  ($\xi\ge \xi_{ign}$),
 the chemical reaction takes place.  
  We note that  uniqueness of  the ignition  interface follows from  the monotonicity of any solution of problem \eqref{eq:i4},\eqref{eq:i5} that results from the fact that $\Omega(u,v)\ge0$ and can be directly verified.
 
 In view of the discussion above, system \eqref{eq:i4} is equivalent  to the following one: 
 \begin{eqnarray}\label{eq:i7}
 \left\{
\begin{array}{lll}
u_{\xi\xi}-cu_{\xi}=0, &  \Lambda v_{\xi\xi}-cv_{\xi}=0, & \xi\in (-\infty,\xi_{ign}),\\
u_{\xi\xi}-cu_{\xi}+v^{\alpha}F(u)=0, & \Lambda v_{\xi\xi}-cv_{\xi}-v^{\alpha}F(u)=0, & \xi\in (\xi_{ign} ,\infty),
\end{array}
\right.
\end{eqnarray}
complemented with conditions of continuity of a solution and its first derivatives when crossing the interface: 
\begin{eqnarray}\label{eq:i7a}
[u]=[v]=[u_{\xi}]=[v_{\xi}]=0 \quad \mbox{at} \quad \xi=\xi_{ign}.
\end{eqnarray}
Here and below $[\cdot]$ stands for a jump of the quantity. A sketch of a traveling front solution for system \eqref{eq:i1} with the reaction order $\alpha\ge 1$ is depicted on Figure \ref{f:1}.
\begin{figure}[h]
\centering \includegraphics[width=4.in]{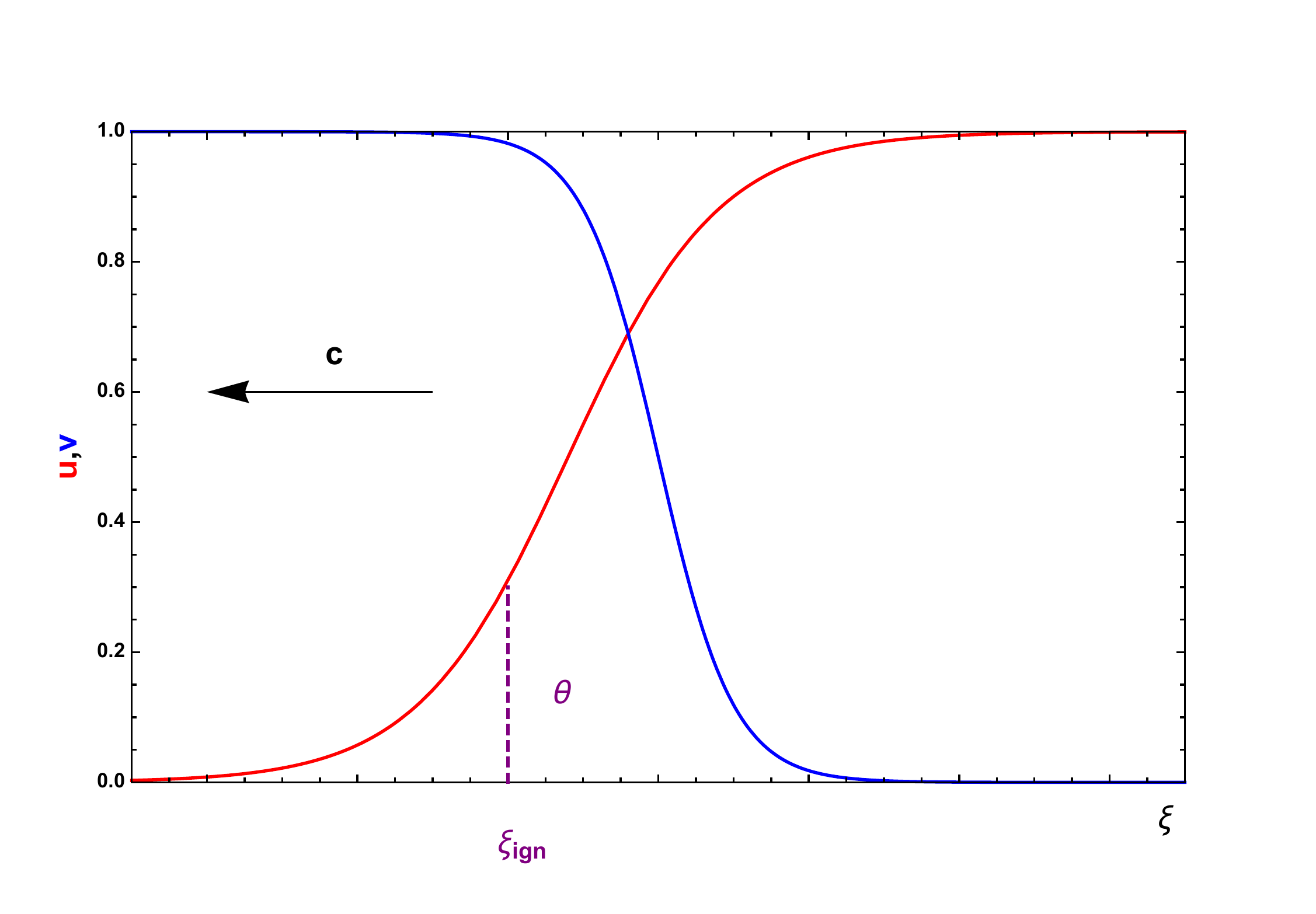}
\caption{Sketch of a traveling front solution for system \eqref{eq:i1} with $\alpha\ge 1$. Flame (red) and deficient  reactant (blue) fronts propagating from burned ($(u,v)=(1,0)$) to  unburned ($(u,v)=(0,1)$)
 states at $\xi\to \infty$ and $\xi\to -\infty$ respectively
with the constant speed $c>0$. The arrow indicates  the direction of propagation.The position of an ignition interface where the temperature is equal to an ignition one ($u=\theta$) is indicated by $\xi_{ign}$. }
\label{f:1} 
\end{figure}

System \eqref{eq:i4}, \eqref{eq:i5}, \eqref{eq:i6} (equivalently system \eqref{eq:i5}, \eqref{eq:i6}, \eqref{eq:i7}, \eqref{eq:i7a}) was extensively studied in the literature.  In the special case when $\Lambda=1,$ this system  reduces to a single equation. 
This equation  is well understood by now, details can be found in many books and review articles on the subject, for example  \cite{ ZBLM, Xin_rev, GK_book,V3 }.
In particular, it is well known that in this case \eqref{eq:i4}, \eqref{eq:i5}, \eqref{eq:i6}  admits a unique traveling front solution for any $\Lambda\in (0,\infty)$ fixed.
The general case when  $\Lambda\ne 1$ is substantially more complex, and the complete  understanding of this case is still lacking.   Review of many relevant results concerning this general case can be found in \cite{V3,Volpert}.
In particular, it was shown in \cite{BNS85} that when the reaction rate $\alpha$ is a positive  integer  then  system \eqref{eq:i4}, \eqref{eq:i5}, \eqref{eq:i6}  admits
a traveling front solution. Moreover, when $\alpha=1$ and  $\Lambda \in (0,1]$
this solution is unique  \cite{Kan63}. The question whether uniqueness of a solution holds for $\alpha=1$ and $\Lambda>1$ is still open.

When $0<\alpha<1,$  non-linear term \eqref{eq:i2} becomes non-Lipschitz at $C=0.$ Consequently,  the system that describes traveling fronts for \eqref{eq:i1} changes substantially. After substitution of an ansatz \eqref{eq:i3} into \eqref{eq:i1} this system reads:
\begin{eqnarray}\label{eq:i7b}
\left\{
\begin{array}{lll}
u_{\xi\xi}-cu_{\xi}=0, & \Lambda v_{\xi\xi}-cv_{\xi}=0, & \xi\in (-\infty,\xi_{ign}),\\
u_{\xi\xi}-cu_{\xi}+v^{\alpha}F(u)=0, & \Lambda v_{\xi\xi}-cv_{\xi}-v^{\alpha}F(u)=0, & \xi\in (\xi_{ign},\xi_{tr}),\\
u_{\xi\xi}-cu_{\xi}=0, & v=0,& \xi\in (\xi_{tr},  \infty),
\end{array}
\right.
\end{eqnarray}
where $\xi_{tr}>\xi_{ign}$ is an a-priori unknown constant which we refer to as a position of  {\it trailing interface}. The trailing interface, in the context of combustion, indicates  the leftmost point where 
entire reactant available for the reaction is  fully consumed.  As a result of it, as can be easily checked, the temperature at the trailing interface as well as at all points to the right  of it is equal to
the temperature of the fully reacted mixture ($u=1, \xi\ge \xi_{tr}$).   Hence,   the distance between the ignition and trailing interfaces $R=\xi_{tr}-\xi_{ign}$ is the length of the reaction zone.

In view of the discussion above, the last equation in \eqref{eq:i7b} can be solved explicitly and the solution reads:
\begin{eqnarray}\label{eq:i8}
u=1, & v=0,& \xi\in (\xi_{tr},\infty).
\end{eqnarray}
Hence,  system \eqref{eq:i7b} reduces to
\begin{eqnarray}\label{eq:i9}
\left\{
\begin{array}{lll}
u_{\xi\xi}-cu_{\xi}=0, & \Lambda v_{\xi\xi}-cv_{\xi}=0, & \xi\in (-\infty,\xi_{ign}),\\
u_{\xi\xi}-cu_{\xi}+v^{\alpha}F(u)=0, & \Lambda v_{\xi\xi}-cv_{\xi}-v^{\alpha} F(u)=0, & \xi\in (\xi_{ign},\xi_{tr}),
\end{array}
\right.
\end{eqnarray}
complemented with boundary conditions at the trailing interface:
\begin{eqnarray}\label{eq:i10}
u=1, \quad u_{\xi}=0, \quad v=v_{\xi}=0 \quad \mbox{at} \quad \xi=\xi_{tr},
\end{eqnarray}
that manifest continuity of the temperate and reactant and their fluxes when crossing the trailing  interface.
Problem \eqref{eq:i9}, \eqref{eq:i10} should be considered with constraint \eqref{eq:i6}, continuity conditions at the ignition interface \eqref{eq:i7a} and boundary like conditions
far ahead of the ignition interface:
\begin{eqnarray}\label{eq:i11}
u \to 0, \quad v\to 1 \quad \mbox{as} \quad \xi \to- \infty.
\end{eqnarray} 
We note that the main principle difference of a system describing traveling front solution for system \eqref{eq:i1} with $\alpha\ge1$ and $0<\alpha<1$ is that the latter involves a trailing interface which position is a-priori unknown.
We also note that any solution of  \eqref{eq:i6}, \eqref{eq:i7a},  
 \eqref{eq:i9},\eqref{eq:i10}, \eqref{eq:i11} is monotone so for any such solution positions of both ignition and trailing interfaces are uniquely defined. The sketch of the traveling front solution for this system is depicted in Figure \ref{f:2}.
\begin{figure}[h!]
\centering \includegraphics[width=4.in]{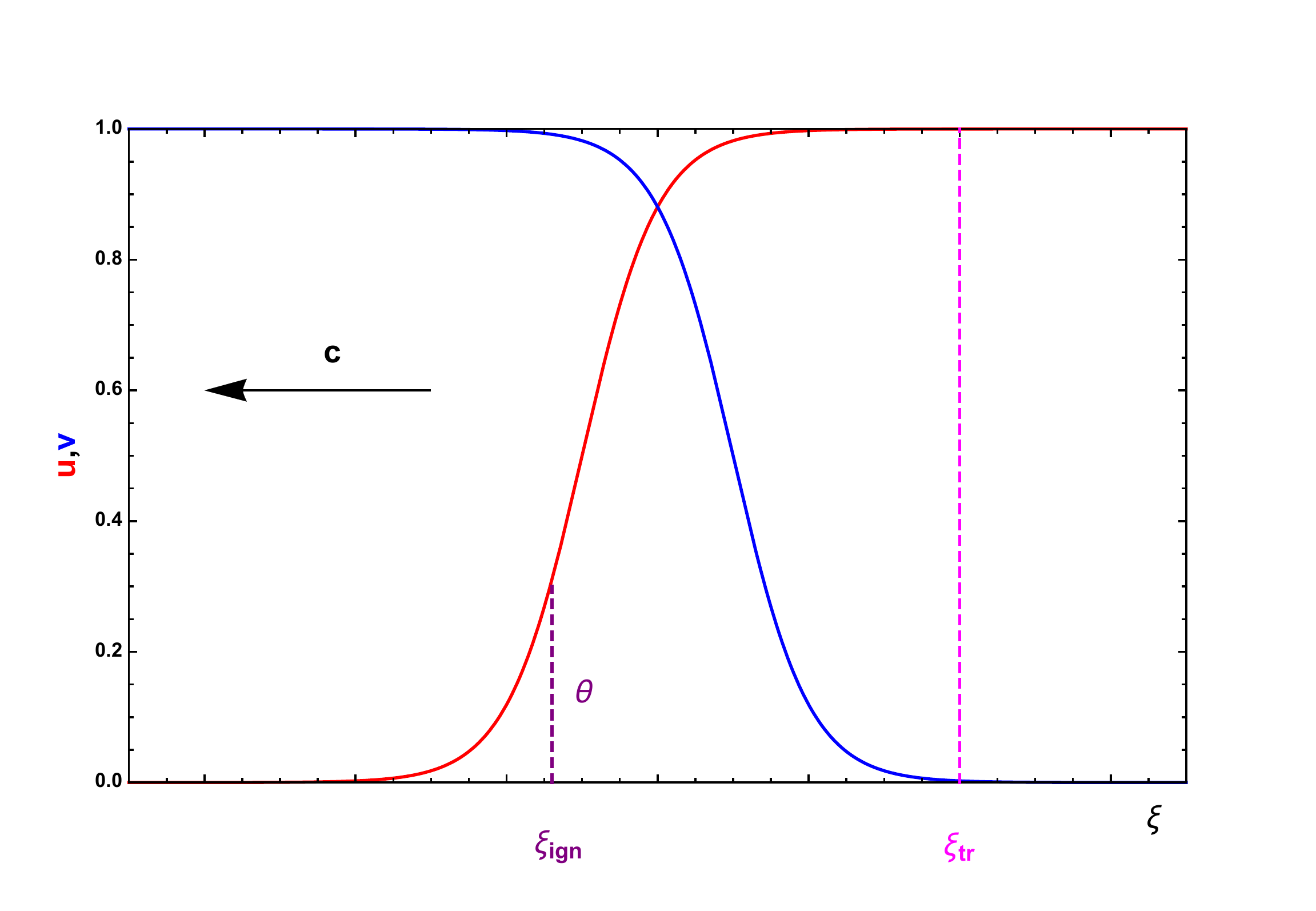}
\caption{Sketch of a traveling front solution for system \eqref{eq:i1} with $0<\alpha< 1$. Flame (red) and deficient  reactant (blue) fronts propagating from burned ($(u,v)=(1,0)$) to an unburned ($(u,v)=(0,1)$)
 states at $\xi\to \infty$ and $\xi\to -\infty$ respectively
with the constant speed $c>0$. The arrow indicates  the direction of propagation.The position of an ignition interface where the temperature is equal to an ignition one ($u=\theta$) is indicated by $\xi_{ign}.$
The position of the trailing interface, the point to the right of which temperature and concentration of deficient  are identically one and zero ($u=1,v=0$), respectively is indicated by  $\xi_{tr}.$
   }
\label{f:2} 
\end{figure}

Despite the fact that reactions  of fractional order ($0<\alpha<1$) are common in various combustion applications,  for example in  high pressure  combustion systems (see e.g \cite{Law}), 
problem \eqref{eq:i6}, \eqref{eq:i7a},  
 \eqref{eq:i9},\eqref{eq:i10}, \eqref{eq:i11} has not received appropriate attention in mathematical literature partially due to the fact that the non-Lipschitz reaction rate creates multiple technical difficulties for the analysis.
To the best of our knowledge, a recent  paper \cite{CMB21} is the only mathematical paper  where this system  was analyzed.  
In \cite{CMB21} the authors considered \eqref{eq:i6}, \eqref{eq:i7a},   \eqref{eq:i9}, \eqref{eq:i10}, \eqref{eq:i11} with a stepwise ignition-temperature kinetics, that is with  $F(T)$ being a Heaviside step-function
\begin{eqnarray}\label{eq:i12}
F(T):=
\left\{
\begin{array}{llll}
1  & T\ge \theta, \\
0 & T<\theta.  
\end{array}
\right.
\end{eqnarray}
The particular choice of stepwise ignition-temperature kinetics, from perspective of applications, is justified by the fact that such kinetic is apparently the most appropriate approximation of overall reaction kinetics
for certain hydrogen-oxygen/air and ethylene-oxygen mixtures as evident from multiple theoretical and numerical studies of detailed  reaction mechanism in such mixtures see \cite{SW} and references therein.

Under the assumption of  stepwise ignition-temperature kinetics,  system \eqref{eq:i9} simplifies to :
\begin{eqnarray}\label{eq:i13}
\left\{
\begin{array}{lll}
u_{\xi\xi}-cu_{\xi}=0, & \Lambda v_{\xi\xi}-cv_{\xi}=0, & \xi\in (-\infty,\xi_{ign}),\\
u_{\xi\xi}-cu_{\xi}+v^{\alpha}=0, & \Lambda v_{\xi\xi}-cv_{\xi}-v^{\alpha}=0, & \xi\in (\xi_{ign},\xi_{tr}).
\end{array}
\right.
\end{eqnarray}

In \cite{CMB21} it was shown that problem \eqref{eq:i6}, \eqref{eq:i7a},  \eqref{eq:i13}, \eqref{eq:i10}, \eqref{eq:i11}   admits a solution, however, the question of multiplicity of solution was not addressed there.
In this paper, we prove that a solution constructed in  \cite{CMB21} is unique and discuss some qualitative properties of the solution.

 The paper is organized as follows. In section 2, we set up the stage by giving some preliminaries and  stating the  main result. 
 In section 3, we give a proof of necessary lemmas and propositions used in the proof of the main result.
 In the last section, we discuss certain properties of traveling front solutions and present results of numerical simulations. In particular, we give some formal arguments 
 based on asymptotic and numerical studies of the problem that the velocity of propagation of the flame front  is a decreasing function of  the ignition temperature $\theta$, inverse of the Lewis number $\Lambda$ and
 the reaction order $\alpha$.

\section{Preliminaries and the statement of the main result.}

In this section, we give an alternative formulation of problem \eqref{eq:i6}, \eqref{eq:i7a},  \eqref{eq:i13}, \eqref{eq:i10}, \eqref{eq:i11},  and state the main result of this paper.
Let us first note that in a view of translational invariants of traveling fronts, we can fix the position  of trailing interface $\xi_{tr}$ and treat $\xi_{ign}$ as a-priori unknown.
Hence, we set
$\xi_{tr}=0$ and $\xi_{ign}=-R$ where $R>0$ is a-priori unknown. With this alteration, the problem describing traveling front solutions for \eqref{eq:i1} with $\alpha\in(0,1)$ and step-wise reaction kinetics
discussed in the introduction  reads: 
\begin{eqnarray}\label{eq:1}
\left\{
\begin{array}{llll}
u_{\xi\xi}-cu_{\xi}=0, & \Lambda v_{\xi\xi}-c v_{\xi} =0, & v>0, & \xi\in(-\infty,-R),\\
u_{\xi\xi}-cu_{\xi}=-v^{\alpha}, & \Lambda v_{\xi\xi}-c v_{\xi} =v^{\alpha}, & v>0 ,& \xi\in(-R,0),
\end{array}
\right.
\end{eqnarray}
This system is complemented  with boundary like conditions far ahead of the combustion front: 
\begin{eqnarray}\label{eq:4}
 u\to 0 \quad v\to 1 \quad \quad \mbox{as} \quad  \xi \to-\infty, 
 \end{eqnarray}
continuity  conditions for a solution and its first derivatives at the ignition interface:
\begin{eqnarray}\label{eq:2}
[u]=[v]=[u_{\xi}]=[v_{\xi}] =0\quad \mbox{at} \quad \xi=-R, 
\end{eqnarray}
boundary condition at the ignition interface:
\begin{eqnarray}\label{eq:3}
u=\theta, \quad \mbox{at} \quad \xi=-R, 
\end{eqnarray}
and boundary conditions at the trailing interface
\begin{eqnarray}\label{eq:5}
u=1, \quad u_{\xi}=0, \quad  v=v_{\xi}=0  \quad \mbox{at} \quad \xi=0
\end{eqnarray}
Straightforward computations show that for any solution  of problem \eqref{eq:1} satisfying \eqref{eq:2} and \eqref{eq:3} has:
\begin{eqnarray}\label{eq:6}
u(\xi)=\theta \exp(c(\xi+R)), \quad  \xi\le -R,
\end{eqnarray}
and thus
\begin{eqnarray}
u=\theta, \quad u_{\xi}=c\theta \quad \mbox{at} \quad \xi=-R.
\end{eqnarray}

Combining elementary computations above, we conclude that  any solution of \eqref{eq:1}--\eqref{eq:5} on an  interval $[0,R]$ must verify the following boundary value problem:
\begin{eqnarray}\label{eq:7}
\left\{
\begin{array}{llll}
u_{\xi\xi}-cu_{\xi}=-v^{\alpha}, & \Lambda v_{\xi\xi}-c v_{\xi} =v^{\alpha}& v>0 ,&  \xi\in(-R,0),\\
u=\theta, & u_{\xi}=c\theta & & \xi=-R,\\
u=1& u_{\xi}=0,& v=v_{\xi}=0, & \xi=0.
\end{array}
\right.
\end{eqnarray}
Therefore, the necessary condition for problem \eqref{eq:1}--\eqref{eq:5} to have a solution is the existence  of solution for problem \eqref{eq:7}.
We note that the existence of solution of  \eqref{eq:7} does not guarantee existence of solution for \eqref{eq:1}--\eqref{eq:5}. Indeed, one can easily verify  that
for any solution of  \eqref{eq:1}--\eqref{eq:5} we must have
\begin{eqnarray}
v(\xi)=1-a\exp\left(\frac{c}{\Lambda}(\xi+R)\right)\quad \xi\in(-\infty,-R),
\end{eqnarray}
for some constant $a\in(0,1)$. This constant, as follows from continuity conditions  \eqref{eq:2}, has to be chosen  from the overdetermined system:
\begin{eqnarray}
1-a=\lim_{\xi\to -R^+}v(\xi), \quad -ca=\Lambda\lim_{\xi\to -R^+}v_{\xi}(\xi),
\end{eqnarray}
which is not necessarily consistent. However, \cite[Theorem 1.1]{CMB21} guarantees that for any set of relevant parameters ( $\theta\in(0,1)$, $\alpha\in(0,1)$, $\Lambda\in(0,\infty)$) system \eqref{eq:1}--\eqref{eq:5} 
admits a solution. In view of this result,  problem \eqref{eq:7} admits a solution and uniqueness of the solution for problem \eqref{eq:7} implies uniqueness of the solution for \eqref{eq:1}--\eqref{eq:5}. 
The main result of this section is as follows:

\begin{theorem} \label{t:main}
Let $\theta\in(0,1)$, $\alpha\in(0,1)$, $\Lambda\in(0,\infty)$. Then, problem \eqref{eq:7} admits a unique solution.
\end{theorem}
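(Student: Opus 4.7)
My approach is to reduce problem \eqref{eq:7} to a scalar transcendental equation and establish strict monotonicity. First, I flip orientation by setting $\eta = -\xi$ and $w = 1 - u$, rewriting \eqref{eq:7} on $[0, R]$ as
\begin{equation*}
w'' + cw' = v^\alpha, \qquad \Lambda v'' + cv' = v^\alpha,
\end{equation*}
with $w(0) = w'(0) = v(0) = v'(0) = 0$ and $w(R) = 1 - \theta$, $w'(R) = c\theta$. Adding the two equations and integrating from $0$ yields the first integral $w' + cw = \Lambda v' + cv$. This first integral is the key algebraic device; it compresses the two boundary conditions at $\eta=R$ into one.

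Second, I exploit scaling invariance. Setting $v(\eta; c) = c^{-2/(1-\alpha)} \hat v(c\eta)$ and $w(\eta; c) = c^{-2/(1-\alpha)} \hat w(c\eta)$ reduces the problem to two universal profiles depending only on $\Lambda, \alpha$, solving $\Lambda \hat v'' + \hat v' = \hat v^\alpha$ and $\hat w'' + \hat w' = \hat v^\alpha$ on $[0, \infty)$ with zero initial data. The non-trivial branch of $\hat v$ is singled out by the local behavior $\hat v \sim A\tau^{2/(1-\alpha)}$ near $\tau = 0$ (standard singular-IVP analysis). A direct integration yields $J_1(T) := \hat w(T) + \hat w'(T) = \Lambda \hat v'(T) + \hat v(T) = \int_0^T \hat v^\alpha\,dr$. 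Using the first integral, both boundary conditions at $\eta = R$, after setting $T := cR$, collapse to the single scalar equation $G(T) = \theta$, where $G(T) := \hat w'(T)/J_1(T)$. Once $T$ is fixed, the pair $(c,R)$ is recovered unambiguously as $c = J_1(T)^{(1-\alpha)/2}$ and $R = T/c$. Uniqueness of the solution to \eqref{eq:7} is therefore equivalent to uniqueness of $T>0$ with $G(T)=\theta$.

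The remaining and most substantial task is to show that $G: (0, \infty) \to (0, 1)$ is strictly decreasing. Asymptotic analysis using $\hat v \sim A\tau^{2/(1-\alpha)}$ near $0$ and $\hat v \sim [(1-\alpha)\tau]^{1/(1-\alpha)}$ at infinity gives $G(0^+) = 1$ and $G(\infty) = 0$, so existence from \cite{CMB21} is consistent with the framework. Strict monotonicity of $G$ reduces to the pointwise inequality $\hat w(T)\hat v^\alpha(T) < \hat w'(T) J_1(T)$, equivalently, $\phi(T) := \hat w(T)/\hat w'(T)$ is strictly increasing. I would first establish log-concavity of $\hat v$ via a phase-plane argument on $\hat v'/\hat v$: at any of its critical points the second derivative is strictly negative (using $\alpha<1$), so no local minimum is possible and $\hat v'/\hat v$ is forced to be strictly decreasing from $+\infty$ to $0$. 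Log-concavity of $\hat v$ is inherited by $\hat v^\alpha = J_1'$, and I would then combine this with the explicit representations $\hat w'(T) = \int_0^T \hat v^\alpha(r) e^{r - T}\,dr$ and $\hat w(T) = \int_0^T \hat v^\alpha(r)(1 - e^{r-T})\,dr$ to deduce the required inequality, either via a correlation/Chebyshev-type argument or by ruling out a first zero of $P(T) := \hat w' J_1 - \hat w \hat v^\alpha$ (noting $P(0)=0$, $P'(0^+)>0$, and that at any putative first zero $T_0$ the sign of $P'(T_0)$ must be nonpositive, yielding a contradiction). The global pointwise inequality is the main technical obstacle I expect to face.
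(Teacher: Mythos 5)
Your reduction is, up to the reflection $\tau=-x$ and relabeling, the same as the paper's: your profile $\hat v$ is the solution $w$ of \eqref{eq:16}, your $J_1(T)=\int_0^T\hat v^\alpha$ is the denominator in \eqref{eq:17}, your representation $\hat w'(T)=\int_0^T\hat v^\alpha(r)e^{r-T}\,dr$ is its numerator, your $G$ is exactly the function $\phi$ of Proposition \ref{p:main}, and the recovery of $(c,R)$ from the unique root of $G(T)=\theta$ is precisely the paper's proof of Theorem \ref{t:main}. Your route to log-concavity of $\hat v$ is a genuine and attractive variant: arguing that at any critical point of $h=\hat v'/\hat v$ one has $h''=(\alpha-1)\hat v^{\alpha-1}h/\Lambda<0$, so every critical point is a strict local maximum, which together with $h\to+\infty$ as $\tau\to 0^+$ forces $h'\le 0$. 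The paper instead proves the contrapositive (Lemma \ref{l:1}) by comparison with an explicit linear ODE, showing that $w''w>(w')^2$ at one point would keep $w$ bounded away from zero and contradict $w(0)=0$; both yield \eqref{eq:a4}, and yours is arguably more direct.

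The genuine gap is the step you yourself flag: the passage from log-concavity to $P(T)=\hat w'J_1-\hat w\hat v^\alpha>0$, i.e.\ to strict monotonicity of $G$, is not carried out, and this is the heart of the matter. Neither of your two suggested strategies is executed, and the first-zero argument does not close as stated: writing $f=\hat v^\alpha$, at a putative first zero $T_0$ one has $\hat w'J_1=\hat wf$, whence $P'(T_0)=f\hat w'-\hat wf'$, and its positivity is equivalent to $f^2>f'\int_0^{T_0}f$ --- a further consequence of log-concavity that requires its own proof (it does hold, e.g.\ via a Gronwall-type argument on $Q=f^2-f'\int_0^Tf$ using $f''f\le (f')^2$, but you have not supplied this lemma). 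The ``correlation-type argument'' you mention in passing is exactly how the paper closes the gap: it writes $\phi(x)=\int_0^\infty\chi(x,y)e^{-y}\,dy$ with $\chi(x,y)=\int_{-x}^{y-x}f\big/\int_{-x}^{0}f$ and shows each $\chi(\cdot,y)$ is non-increasing, and strictly decreasing on $[y,\infty)$, by integrating the log-concavity inequality $f(s+y-x)/f(y-x)\le f(s-x)/f(-x)$ (Lemma \ref{l:2}). So your skeleton is correct and completable, but as written the decisive monotonicity is asserted rather than proved.
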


Let us now show  that the proof of   Theorem \ref{t:main} reduces to the analysis of  the solution of a single second order ODE. 
First,  let us observe that integration of the equation for temperature $u$ in \eqref{eq:7} and integration of the same equation multiplied by  $\exp(-c\xi)$ taking into account boundary conditions give:
\begin{eqnarray}\label{eq:9}
c=\int_{-R}^0 v^{\alpha}(\xi)d\xi,
\end{eqnarray}
\begin{eqnarray}\label{eq:10}
c\theta=\int_{-R}^0 v^{\alpha}(\xi) \exp(-c(\xi+R)) d\xi.
\end{eqnarray}
Next we introduce the following rescaling:
\begin{eqnarray}\label{eq:11}
v(\xi)= A \tilde w(x),
\end{eqnarray}
where
\begin{eqnarray}\label{eq:12}
x:= c \xi, \quad A:= c^{-\frac{2}{1-\alpha}}.
\end{eqnarray}
Substituting   \eqref{eq:11}, \eqref{eq:12}
into the equation for concentration of reactant $v$ in \eqref{eq:7} and taking into account boundary conditions, we have:
\begin{eqnarray}\label{eq:13}
\left\{
\begin{array}{lll}
 \Lambda \tilde w^{\prime\prime}-\tilde w^{\prime} -\tilde w^{\alpha}=0, & \tilde w>0, &x\in (-cR,0),\\[2mm]
\tilde  w=0, & \tilde w^{\prime}=0,&  x=0,
\end{array}
\right.
\end{eqnarray}
where the prime denotes the derivative with respect to the variable $x$.

Consider now the first equation of \eqref{eq:13} extended on the half line, that is, the following problem:
\begin{equation}\label{eq:16}
\left\{
\begin{array}{lr}
\Lambda  w^{\prime\prime}- w^{\prime} - w^{\alpha}=0, & x<0,\\
w(0)=0, & w^{\prime}(0)=0.
\end{array}
\right.
\end{equation}
Properties of \eqref{eq:16} were studied in great detail in \cite[Section 4]{CMB21} via a topological approach, see also appendix below.
In particular, the following result comes directly from  Lemmas 4.7 and 4.8 of \cite{CMB21}, see appendix for more details.
\begin{lemma}\label{l:w}
There exists a unique function $w$ positive on $(-\infty,0)$   that verifies \eqref{eq:16}. 
\end{lemma}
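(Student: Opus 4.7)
The plan is to treat \eqref{eq:16} as an initial value problem at the singular endpoint $x = 0$, where the Cauchy data $w(0) = w'(0) = 0$ does not uniquely determine a solution because $w \mapsto w^\alpha$ fails to be Lipschitz at $w = 0$. The trivial solution $w \equiv 0$ always exists, so the task is to identify a \emph{unique} nontrivial positive branch of \eqref{eq:16} on $(-\infty,0)$.

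For existence, I would start from a self-similar power-law approximation near $x = 0^-$. Substituting $w(x) = K(-x)^{\beta}$ into \eqref{eq:16} and balancing $\Lambda w''$ against $w^\alpha$ (the $w'$ term is formally of lower order as $x \to 0^-$) yields $\beta = 2/(1-\alpha)$ and
\begin{equation*}
K = \left( \frac{2\Lambda(1+\alpha)}{(1-\alpha)^2} \right)^{1/(1-\alpha)}.
\end{equation*}
This pins down the leading-order profile of any nontrivial positive solution. To produce an actual solution with this behavior, I would set $w(x) = K(-x)^\beta(1 + \varphi(x))$, substitute into \eqref{eq:16} to obtain a singular linear-plus-nonlinear equation for $\varphi$, and solve it on a small interval $(-\delta,0)$ by a contraction mapping argument in a weighted function space enforcing $\varphi(x) \to 0$ as $x \to 0^-$. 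Extension to all of $(-\infty,0)$ then follows from standard ODE theory: on any region $\{w \ge \eta > 0\}$ the nonlinearity is Lipschitz, and elementary phase-plane estimates prevent $w$ from returning to $0$ or from escaping to infinity in finite time as $x$ decreases.

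For uniqueness, the strategy is to show that \emph{every} positive solution of \eqref{eq:16} on $(-\infty,0)$ must satisfy $w(x) = K(-x)^\beta(1 + o(1))$ and $w'(x) = -K\beta(-x)^{\beta-1}(1 + o(1))$ as $x \to 0^-$ with the same constant $K$. I would introduce the change of variables $s = -\log(-x)$ and $z(s) = w(x)/\bigl(K(-x)^\beta\bigr)$, which turns \eqref{eq:16} into an autonomous planar system whose only equilibrium in the physically relevant region is $z \equiv 1$. A crude two-sided comparison of $w$ with suitable constant multiples of $(-x)^\beta$ (super- and sub-solutions of \eqref{eq:16}) first traps the trajectory in a compact piece of the $(z, z')$-plane, and then a Lyapunov or monotonicity argument forces convergence to that equilibrium. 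Once any two positive solutions $w_1, w_2$ are known to share these asymptotics, picking an $x_0 < 0$ sufficiently close to $0$ and invoking Lipschitz uniqueness of \eqref{eq:16} on $\{w > 0\}$ from $x_0$ leftward identifies them on all of $(-\infty,0)$. The main obstacle is precisely this last step: upgrading the easy two-sided bound $c_1 (-x)^\beta \le w(x) \le c_2 (-x)^\beta$ to the sharp asymptotic with the correct constant $K$ is where the non-Lipschitz structure of the reaction term genuinely enters, and it is the technical heart of the argument.
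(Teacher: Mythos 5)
The paper does not actually prove this lemma from scratch: it imports it from \cite[Lemmas 4.7 and 4.8]{CMB21}, where the statement is obtained by a phase-plane/topological argument (a weak Poincar\'e--Bendixson theorem applied to the planar system \eqref{eq:cl2}, yielding the unique stable orbit $p^{\star}(q)$ of Proposition \ref{p:cl}). Your local-analysis route -- self-similar leading profile at the degenerate endpoint, contraction mapping for the correction, asymptotically autonomous reduction in $s=-\log(-x)$ for uniqueness -- is therefore a genuinely different strategy, and a reasonable one. But as written it has one concrete error and one genuine gap. The error: your constant $K$ is the reciprocal of the correct one. Balancing $\Lambda w''\approx w^{\alpha}$ with $w=K(-x)^{\beta}$, $\beta=2/(1-\alpha)$, gives $\Lambda K\beta(\beta-1)=K^{\alpha}$, i.e. $K^{1-\alpha}=\tfrac{(1-\alpha)^2}{2\Lambda(1+\alpha)}$, consistent with the paper's asymptotic formula \eqref{eq:21}, not $K^{1-\alpha}=\tfrac{2\Lambda(1+\alpha)}{(1-\alpha)^2}$.

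The gap is in the final uniqueness step. Knowing that two positive solutions $w_1,w_2$ both satisfy $w_i(x)=K(-x)^{\beta}(1+o(1))$ and $w_i'(x)=-K\beta(-x)^{\beta-1}(1+o(1))$ as $x\to 0^-$ does \emph{not} give $w_1(x_0)=w_2(x_0)$ and $w_1'(x_0)=w_2'(x_0)$ at any actual point $x_0<0$, so you cannot "invoke Lipschitz uniqueness from $x_0$ leftward"; two distinct solutions could share the leading asymptotics and still disagree everywhere. The real mechanism is hiding in the structure you set up but did not use: in the variables $s=-\log(-x)$, $z=w/(K(-x)^{\beta})$, the linearization of the limiting autonomous system at $z=1$ has characteristic polynomial $\lambda^2+(1-2\beta)\lambda+\beta(\beta-1)(1-\alpha)$, whose roots both have positive real part since $\beta>2$. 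Thus $z=1$ is a \emph{repeller} as $s\to+\infty$; this is what forces at most one orbit of the full (exponentially perturbed) system to converge to it, and it is also why a "Lyapunov or monotonicity argument forcing convergence to that equilibrium" cannot work as stated -- generic trapped trajectories are pushed away from a source, so you must instead rule out other $\omega$-limit sets (no periodic orbits, by Bendixson--Dulac, since the divergence is $2\beta-1>0$) and exclude the equilibrium $z=0$ via a lower barrier. Note also that the "easy" two-sided bound is not so easy on the upper side: the elementary estimate \eqref{eq:wup} only gives $w\lesssim(-x)^{1/(1-\alpha)}=(-x)^{\beta/2}$, which is far weaker than $(-x)^{\beta}$ near the origin, so trapping $z$ in a compact set requires a bootstrap you have not supplied. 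Until these steps are filled in, the proposal is an outline rather than a proof; the paper sidesteps all of this by citing the topological construction in \cite{CMB21}.
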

It is clear that $w$ is smooth as long as it does not vanish. The function $w$ and its derivative can be extended by $0$ up to $0^-$. More precisely, it follows from \cite[Lemmas 6.5 \& 6.6]{CMB21} that the H\"older regularity of the function $w$ near $x=0$ reads:
$w\in C^{\infty}(-\infty,0))\cap C^{2+[\beta],\beta-[\beta]}(-\infty,0])$, $\beta=\frac{2\alpha}{1-\alpha}$, $0<\alpha<1$.	Moreover, $w$ is decreasing on $(-\infty,0]$.

As a consequence,  for an arbitrary $c>0$,  $R>0$ there  is a unique decreasing positive solution to \eqref{eq:13} and $\tilde w(x)=w(x)$  on $(-cR,0].$ 

In view of observations above we note that
in terms of rescaled variables  \eqref{eq:9}, \eqref{eq:10} read:
\begin{eqnarray}\label{eq:14}
c^{\frac{2}{1-\alpha}}=\int_{-cR}^0  w^{\alpha}(s)ds,
\end{eqnarray}
\begin{eqnarray}\label{eq:15}
 \theta c^{\frac{2}{1-\alpha}}=\int_{-cR}^0  w^{\alpha}(s) \exp(-(s+cR)) ds,
\end{eqnarray}
which in particular imply
\begin{eqnarray}
\theta=\frac{\int_{-cR}^0 w^{\alpha}(s) \exp(-(s+cR)) ds}{\int_{-cR}^0 w^{\alpha}(s) ds}.
\end{eqnarray}
Introduce the following function:
\begin{eqnarray}\label{eq:17}
\phi(x):=\frac{\int_{-x}^0 w^{\alpha}(s) \exp(-(s+x)) ds}{\int_{-x}^0 w^{\alpha}(s) ds}, \quad x\in(0,\infty).
\end{eqnarray}
The key observation that allows us to prove the main Theorem \ref{t:main} is the following. 
\begin{proposition}\label{p:main}
Let $w$ be the solution of \eqref{eq:16} and set
\begin{eqnarray}\label{eq:17}
\phi(x):=\frac{\int_{-x}^0 w^{\alpha}(s) \exp(-(s+x)) ds}{\int_{-x}^0 w^{\alpha}(s) ds}, \quad x\in(0,\infty).
\end{eqnarray}
The function $\phi$ defined by \eqref{eq:17} is strictly decreasing on $(0,\infty)$ and
\begin{eqnarray}
\lim_{x\to0}\phi(x)=1, \quad \lim_{x\to \infty} \phi(x)=0.
\end{eqnarray}
\end{proposition}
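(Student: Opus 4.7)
\emph{Overall plan.} I would first dispatch the two boundary limits with simple estimates, and then reduce strict monotonicity, via one integration by parts, to a convexity statement for an explicit transform of $w$.

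\emph{Limits.} The sandwich $e^{-x}\le e^{-(s+x)}\le 1$ on $s\in[-x,0]$ gives $e^{-x}\le \phi(x)\le 1$, hence $\phi(0^+)=1$. For $x\to\infty$, introduce $W(y):=w(-y)$ on $[0,\infty)$; the equation for $w$ becomes $\Lambda W''+W'=W^\alpha$, so $V:=W'$ satisfies the linear ODE $\Lambda V'+V=W^\alpha$. If $W$ were bounded, its monotone limit $L$ would make the RHS tend to $L^\alpha>0$, and the explicit formula for $V$ would force $V(y)\to L^\alpha>0$, integrating to a contradiction with boundedness of $W$. So $W(y)\to\infty$. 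The crude bound $N(x)\le W^\alpha(x)$ (since $w^\alpha$ is maximized on $[-x,0]$ at $s=-x$ and $\int_{-x}^0 e^{-(s+x)}ds\le 1$), together with $D(x)\ge W(x)$, then gives $\phi(x)\le W(x)^{\alpha-1}\to 0$.

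\emph{Reducing monotonicity to a pointwise ratio.} Setting $h(y):=W^\alpha(y)$ and $D(y):=W(y)+\Lambda W'(y)$, the ODE yields both $D(y)=\int_{-y}^0 w^\alpha(s)\,ds$ and $D'(y)=h(y)$. One integration by parts in the numerator of $\phi$ and the substitution $t=x-y$ produce
\[
1-\phi(x)=\int_0^x \frac{D(x-t)}{D(x)}\,e^{-t}\,dt,
\]
and differentiating (the boundary term at $t=x$ vanishes because $D(0)=0$) gives
\[
-\phi'(x)\,D(x)^2=\int_0^x \bigl[h(x-t)\,D(x)-D(x-t)\,h(x)\bigr]\,e^{-t}\,dt.
\]
The integrand is strictly positive for $0<t<x$ precisely when $h(x-t)/D(x-t)>h(x)/D(x)$, so everything reduces to proving that $y\mapsto h(y)/D(y)$ is strictly decreasing on $(0,\infty)$.

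\emph{Strict convexity of $W^{1-\alpha}$ --- the main obstacle.} Using $D'=h$, the inequality $(h/D)'<0$ rearranges to $h'D<h^2$, and after expansion and substitution of the ODE it is equivalent to $WW''-\alpha(W')^2>0$, i.e.\ to strict convexity of $F:=W^{1-\alpha}$. Multiplying the $W$-equation by $(1-\alpha)W^{-\alpha}$ produces the transported equation
\[
\Lambda F''+F'+\frac{\Lambda\alpha\,(F')^2}{(1-\alpha)\,F}=1-\alpha,
\]
and the leading balance $\Lambda W''\sim W^\alpha$ near $y=0$ gives $W(y)\sim C_0y^{2/(1-\alpha)}$, whence $F(y)\sim C_0^{1-\alpha}y^2$ and $F''(0^+)>0$. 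Proving $F''>0$ \emph{globally} on $(0,\infty)$ is what I expect to be the main difficulty: far from $0$ the far-field asymptotic $W\sim K y^{1/(1-\alpha)}$ makes $F$ asymptotically linear, so $F''\to 0$ and leading-order cancellations erase any naive sign comparison. The saving observation is that differentiating the $F$-equation once cancels every term containing $F''$ at a putative zero $y^*$ of $F''$ and leaves
\[
\Lambda F'''(y^*)=\frac{\Lambda\alpha\,(F'(y^*))^3}{(1-\alpha)\,F(y^*)^2}>0,
\]
since $F(y^*),F'(y^*)>0$. Taking $y^*$ to be the first zero of $F''$ on $(0,\infty)$, continuity forces $F'''>0$ in a two-sided neighbourhood of $y^*$, so $F''$ is strictly increasing through $y^*$; this forces $F''<0$ immediately to the left of $y^*$, contradicting $F''>0$ on $[0,y^*)$. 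Hence $F''>0$ throughout $(0,\infty)$, so $h/D$ is strictly decreasing, and $\phi$ is strictly decreasing on $(0,\infty)$ as claimed.
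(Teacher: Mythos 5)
Your proposal is correct in substance, and while it meets the paper's argument at one common waypoint, it resolves the crucial step by a genuinely different mechanism. Both proofs ultimately hinge on the same representation $1-\phi(x)=\int_0^x \frac{D(x-t)}{D(x)}e^{-t}\,dt$ (the paper writes it as $\phi(x)=\int_0^\infty \chi(x,y)e^{-y}dy$ with $\chi=1-\psi$) and on the same pointwise fact, namely that the ``hazard rate'' $h/D=w^{\alpha}(-x)\big/\int_{-x}^0 w^{\alpha}$ is strictly decreasing, i.e.\ that $D$ is log-concave. The paper derives this from log-concavity of the integrand, $ww''\le (w')^2$, which it proves by a self-contained barrier/comparison argument (Lemma \ref{l:1} and Corollary \ref{cor:1}), and then integrates the inequality $g(s+y-x)\le g(s-x)$ for $g=\alpha w'/w$. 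You instead exploit the ODE twice ($D=W+\Lambda W'$, $D'=W^{\alpha}$) to reduce $h'D<h^2$ to the \emph{complementary} inequality $WW''\ge\alpha(W')^2$, i.e.\ convexity of $F=W^{1-\alpha}$, which you establish by a clean first-zero/third-derivative argument on the transported equation. (Two true inequalities, $\alpha(W')^2\le WW''\le (W')^2$, each of which suffices.) Your approach buys a shorter, more mechanical monotonicity step and a tidier proof of $\phi(x)\to 0$ (the bounds $N(x)\le W^{\alpha}(x)$ and $D(x)>W(x)$ replace the paper's splitting of the integral at $-x+\sqrt{\rho(x)}$); the paper's approach buys independence from the local behaviour of $w$ at the degenerate endpoint. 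Two small caveats on your write-up: first, $h'D<h^2$ is not \emph{equivalent} to $WW''-\alpha(W')^2>0$ --- expanding with the ODE gives $h^2-h'D=\Lambda W^{\alpha-1}\bigl[WW''-\alpha(W')^2\bigr]+(1-\alpha)W^{\alpha}W'$, so convexity of $F$ is sufficient but not necessary; this only helps you. Second, your anchor $F''(0^+)>0$ rests on the asymptotic $W(y)\sim C_0\,y^{2/(1-\alpha)}$ (with differentiable remainder), which the paper states in \eqref{eq:21} only as ``formal considerations which can be made rigorous'' (it does follow from the H\"older regularity $w\in C^{2+[\beta],\beta-[\beta]}$ of \cite{CMB21} quoted after Lemma \ref{l:w}); if you want a proof independent of that asymptotic, the paper's comparison lemma shows how to avoid it.
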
 
Proof of Proposition \ref{p:main} is given in the next section. 
This proposition makes the proof of  Theorem \ref{t:main} given below elementary.
\begin{proof}[Proof of Theorem \ref{t:main}]
From \eqref{eq:14} and \eqref{eq:15} we have
\begin{eqnarray}\label{eq:18}
c=\left(\int_{-cR}^0 w^{\alpha}(s)ds\right)^{\frac{1-\alpha}{2}},
\end{eqnarray}
\begin{eqnarray}\label{eq:19}
\theta=\frac{\int_{-cR}^0 w^{\alpha}(s) \exp(-(s+cR)) ds}{\int_{-cR}^0 w^{\alpha}(s) ds}.
\end{eqnarray}

By strict monotonicity of $\phi$ (Proposition \ref{p:main}), there is a unique value $\sigma^*\in(0,\infty) $ such that $\phi(\sigma^*)=\theta.$
Thus, in \eqref{eq:18} we must have $cR=\sigma^*.$ This observation and uniqueness of the solution of \eqref{eq:16} imply
uniqueness of the speed $c=c^*$ with
\begin{eqnarray}\label{eq:20}
c^*=\left(\int_{-\sigma^*}^0 w^{\alpha}(s)ds\right)^{\frac{1-\alpha}{2}}.
\end{eqnarray}
The latter immediately implies uniqueness of position of the ignition interface  $R=R^*=\sigma^*/c^*.$
\end{proof}

\section{Proof of Proposition \ref{p:main}}

In this section we present a proof  of  Proposition \ref{p:main} which is  based on the following three lemmas.

\begin{lemma}\label{l:0} Function $\phi(x)$ defined by \eqref{eq:17} has the following properties:
\begin{eqnarray}\label{eq:a0}
\lim_{x\to 0}\phi(x)=1, \quad \lim_{x\to \infty} \phi(x)=0.
\end{eqnarray}
\end{lemma}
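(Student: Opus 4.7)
The lemma splits into two independent limit claims, and I would treat them separately.

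For $\lim_{x \to 0^+} \phi(x) = 1$, I would use the elementary pointwise bound $e^{-x} \le e^{-(s+x)} \le 1$ valid for $s \in [-x, 0]$. Multiplying by $w^\alpha(s) \ge 0$, integrating over $[-x,0]$, and dividing by $\int_{-x}^0 w^\alpha(s)\,ds$ gives the sandwich $e^{-x} \le \phi(x) \le 1$, which settles this limit.

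For $\lim_{x \to \infty} \phi(x) = 0$, my strategy is to bound the numerator and denominator of $\phi$ separately using monotonicity and an integration of the ODE. Since $w$ is decreasing on $(-\infty,0]$, $w^\alpha(s) \le w^\alpha(-x)$ on $[-x,0]$, and since $\int_{-x}^0 e^{-(s+x)}\,ds = 1 - e^{-x} \le 1$, the numerator is bounded above by $w^\alpha(-x)$. For the denominator, I integrate the ODE $\Lambda w'' - w' = w^\alpha$ from $-x$ to $0$; using $w(0) = w'(0) = 0$ and $w'(-x) \le 0$ (monotonicity) gives
\begin{equation*}
\int_{-x}^0 w^\alpha(t)\,dt \;=\; w(-x) - \Lambda w'(-x) \;\ge\; w(-x).
\end{equation*}
Combining, $\phi(x) \le w(-x)^{\alpha-1}$, and since $\alpha - 1 < 0$ the claim reduces to showing $w(-x) \to \infty$ as $x \to \infty$.

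To prove this auxiliary fact, set $L := \lim_{x \to \infty} w(-x) \in [w(-1),\infty]$, which exists by monotonicity. Suppose for contradiction $L < \infty$. Monotonicity gives $w^\alpha(t) \ge w^\alpha(-1) > 0$ for $t \le -1$, so $\int_{-x}^0 w^\alpha\,dt$ grows at least linearly in $x$, and the identity above forces $-\Lambda w'(-x) \to \infty$. But then for any $M$ there is $T < 0$ with $-w'(t) \ge M$ on $(-\infty, T]$, and since $w(-x) = \int_{-x}^0 (-w'(t))\,dt$ (using $w(0)=0$), we get $w(-x) \ge M(x + T)$ for $x \ge |T|$, contradicting $L < \infty$. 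Hence $L = \infty$, and the proof is complete.

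The only mild obstacle is this final dichotomy argument; the core numerator/denominator estimate is a single integration of the ODE combined with the monotonicity of $w$.
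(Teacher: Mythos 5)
Your proof is correct. The first limit is handled exactly as in the paper, via the sandwich $e^{-x}\le\phi(x)\le 1$. For the second limit you take a genuinely different, and in fact simpler, route. The paper also starts from the integrated ODE identity $-\Lambda w'(-x)+w(-x)=\int_{-x}^0 w^\alpha(s)\,ds$ to get the lower bound $\int_{-x}^0 w^\alpha \ge w(-x)$ on the denominator, but then controls the numerator by splitting the integral at $-x+\sqrt{\rho(x)}$, where $\rho(x)=\int_{-x}^0 w^\alpha/w^\alpha(-x)$, obtaining $\phi(x)\le e^{-\sqrt{\rho(x)}}+\rho(x)^{-1/2}$ with $\rho(x)>w^{1-\alpha}(-x)\to\infty$. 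You instead use the monotonicity of $w$ to bound the numerator by $w^\alpha(-x)\int_{-x}^0 e^{-(s+x)}\,ds\le w^\alpha(-x)$ outright, which gives the cleaner (and sharper) estimate $\phi(x)\le w(-x)^{\alpha-1}$ and avoids the splitting entirely; both arguments ultimately rest on the same two ingredients ($\alpha<1$ and $w(-x)\to\infty$), so nothing is lost. You also gain something the paper omits: the divergence $w(-x)\to\infty$ is merely asserted there, whereas your dichotomy argument (if $w$ stayed bounded, the linear growth of $\int_{-x}^0 w^\alpha$ would force $-w'(-x)\to\infty$ and hence unbounded growth of $w$ after one more integration) proves it from the ODE and the boundary conditions alone, which makes the lemma self-contained.
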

\begin{proof}
Let us first prove  the first equality in \eqref{eq:a0}.
Observe that
\begin{eqnarray}
e^{-x} \int_{-x}^0 w^{\alpha}(s)ds< \int_{-x}^0 w^{\alpha}(s)e^{-(s+x)} ds <\int_{-x}^0 w^{\alpha}(s)ds, \quad x>0.
\end{eqnarray}
Hence,
\begin{eqnarray}
e^{-x}<\phi(x)<1, \quad x>0.
\end{eqnarray}
Sending $x$ to zero in the inequality above we have the first equality in \eqref{eq:a0}.

Now let us  prove the second equality  in \eqref{eq:a0}.
Integrating the equation in \eqref{eq:16} and taking into account boundary conditions we have:
\begin{eqnarray}
-\Lambda w^{\prime}(-x)+w(-x)=\int_{-x}^0 w^{\alpha}(s)ds.
\end{eqnarray}
Since $w^{\prime}(-x)<0$ for $x>0$ we have:
\begin{eqnarray}
w(-x)<\int_{-x}^0 w^{\alpha}(s)ds.
\end{eqnarray}
Let 
\begin{eqnarray}
\rho(x)=\frac{\int_{-x}^0 w^{\alpha}(s)ds}{w^{\alpha}(-x)}.
\end{eqnarray}
In view of the inequality above we have:
\begin{eqnarray}
\rho(x)> w^{1-\alpha}(-x),
\end{eqnarray}
and thus   
\begin{eqnarray}\label{eq:a01}
\rho(x)\to \infty \quad \mbox{as} \quad x\to \infty, 
\end{eqnarray}
as $w(-x) \to \infty$ as $x\to\infty$.

Next observe that
\begin{eqnarray}
&&\int_{-x}^0 w^{\alpha}(s)e^{-(s+x)}ds=\int_{-x+\sqrt{\rho(x)}}^0w^{\alpha}(s)e^{-(s+x)}ds+\int^{-x+\sqrt{\rho(x)}}_{-x}w^{\alpha}(s)e^{-(s+x)}ds\le\nonumber\\
 &&e^{-\sqrt{\rho(x)}}\int_{-x}^0 w^{\alpha}(s)ds+w^{\alpha}(-x)\sqrt{\rho(x)}
\end{eqnarray}
Dividing  the expression above by $\int_{-x}^0 w^{\alpha}(s)ds$ and using the definitions of $\phi(x)$ and $\rho(x)$ we have:
\begin{eqnarray}
\phi(x)\le e^{-\sqrt{\rho(x)}} +\frac{1}{\sqrt{\rho(x)}}.
\end{eqnarray}
Sending $x\to \infty$ in this inequality and taking into account \eqref{eq:a01} we obtain the second equation in \eqref{eq:a0}.
\end{proof}

\begin{lemma}\label{l:1}
Let $Y$ be a  non-negative solution of 
\begin{eqnarray}\label{eq:a1}
\Lambda Y^{\prime\prime}-Y^{\prime}-Y^{\alpha}=0.
\end{eqnarray}
Assume there is $x_0\in\mathbb{R}$ such that 
\begin{eqnarray}\label{eq:a2}
Y(x_0)>0, \quad Y^{\prime}(x_0)<0, \quad Y^{\prime\prime}(x_0) Y(x_0)>\left( Y^{\prime}(x_0)\right)^2.
\end{eqnarray}
Then, 
\begin{eqnarray}\label{eq:a3}
\inf_{ x\in[x_0,\infty)}Y(x)>0. 
\end{eqnarray}
\end{lemma}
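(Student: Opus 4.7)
The plan is to study the logarithmic derivative $u:=Y'/Y$ wherever $Y>0$ via the Riccati-type equation it satisfies. A direct computation using $\Lambda Y''=Y'+Y^{\alpha}$ yields
$$\Lambda u'\;=\;u\;+\;Y^{\alpha-1}\;-\;\Lambda u^{2},\qquad(\star)$$
and the hypothesis at $x_0$ translates to $u(x_0)<0$ together with $u'(x_0)=[Y''(x_0)Y(x_0)-(Y'(x_0))^{2}]/Y(x_0)^{2}>0$. My strategy is to prove that $u$ is strictly increasing on the maximal interval where $Y>0$ and $u<0$, and then to extract the positivity of $\inf Y$ from that monotonicity.

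\emph{Key step: monotonicity of $u$ in the negative region.} I will show that as long as $Y>0$ and $u<0$, one has $u'>0$. Suppose for contradiction that $u'$ has a first zero at some $x_1>x_0$, with $u'>0$ on $[x_0,x_1)$ and $u(x_1)<0$. Differentiating $(\star)$,
$$\Lambda u''\;=\;u'(1-2\Lambda u)\;+\;(\alpha-1)\,u\,Y^{\alpha-1}.$$
At $x_1$ the first summand vanishes, so $\Lambda u''(x_1)=(\alpha-1)\,u(x_1)\,Y^{\alpha-1}(x_1)>0$, since $\alpha-1<0$, $u(x_1)<0$, and $Y^{\alpha-1}(x_1)>0$. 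But $u'$ going from positive to zero at $x_1$ forces $u''(x_1)\le 0$, a contradiction.

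\emph{Case analysis on the maximal interval $[x_0,T)$ where $u<0$ and $Y>0$.} The monotonicity above shows that $u$ is confined to $[u(x_0),0)$ on this interval. Three scenarios arise. (i) If $T<\infty$ with $Y(T)=0$, then $\log Y(x)-\log Y(x_0)=\int_{x_0}^{x}u(s)\,ds$ stays bounded on $[x_0,T]$, forcing $Y(T)\ge Y(x_0)\,e^{u(x_0)(T-x_0)}>0$, a contradiction. (ii) If $T=\infty$ and $Y(x)\to 0$, then $Y^{\alpha-1}(x)\to\infty$, and $(\star)$ together with the boundedness of $u$ forces $u'(x)\to\infty$, again contradicting the boundedness of $u$; hence $\lim_{x\to\infty}Y(x)>0$. (iii) If $T<\infty$ with $u(T)=0$ and $Y(T)>0$, then $(\star)$ at $T$ gives $u'(T)=Y^{\alpha-1}(T)/\Lambda>0$, so $u>0$ just after $T$; an analogous sign argument (at a putative first zero $x_2>T$ of $u$ with $u>0$ on $(T,x_2)$, the decrease of $u$ to zero forces $u'(x_2)\le 0$, whereas $(\star)$ gives $u'(x_2)=Y^{\alpha-1}(x_2)/\Lambda>0$) shows that $u$ remains non-negative on $[T,\infty)$, so $Y$ is non-decreasing there and $\inf Y=Y(T)>0$.

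\emph{Main obstacle.} The delicate point is to exclude the finite-time extinction scenario, which is exactly the behavior of the function $w$ of Lemma \ref{l:w} near $x=0$: from the asymptotics $w(y)\sim c(-y)^{2/(1-\alpha)}$ one checks directly that $w''w-(w')^{2}<0$ near the zero. The hypothesis $Y''(x_0)Y(x_0)>(Y'(x_0))^{2}$ asserts the opposite one-point convexity of $\log Y$, and upgrading this single-point information to the global monotonicity of $u$ via $(\star)$ and the sign analysis at a putative first critical point of $u$ is the crux of the argument.
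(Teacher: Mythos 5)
Your proof is correct, and it takes a genuinely different route from the paper's. The paper freezes the coefficient $Y^{\alpha-1}$ at its value $Y_0^{\alpha-1}$ at $x_0$, notes that the third hypothesis in \eqref{eq:a2} places $Y_0'/Y_0$ strictly between the roots $\mu_\pm$ of $\Lambda\mu^2-\mu-Y_0^{\alpha-1}=0$, and then constructs an explicit solution $Z$ of the linear barrier equation $\Lambda Z''-Z'-Y_0^{\alpha-1}Z=0$ that stays uniformly away from zero; a simultaneous comparison $Y>Z$, $Y'>Z'$ on the set where $Y<Y_0$ (using $Y^{\alpha}>Y_0^{\alpha-1}Y$ there), followed by a separate integral argument showing $Y$ increases once it returns to the level $Y_0$, yields the bound. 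You instead pass to the logarithmic derivative $u=Y'/Y$ and its Riccati equation $(\star)$; the same quadratic reappears as the equilibrium condition of $(\star)$, and the hypothesis is exactly $u'(x_0)>0$. Your key step — that $u'$ cannot have a first zero while $u<0$, since at such a point $\Lambda u''=(\alpha-1)\,u\,Y^{\alpha-1}>0$ while the first-zero condition forces $u''\le 0$ — replaces the barrier construction, and the resulting trichotomy is handled correctly: the bound $\log Y(x)\ge \log Y(x_0)+u(x_0)(x-x_0)$ rules out finite-time extinction, the boundedness of $u$ against $Y^{\alpha-1}\to\infty$ rules out decay to zero at infinity, and the sign of $(\star)$ at a zero of $u$ shows $u$ stays nonnegative afterwards (this last step is the analogue of the paper's argument on $(x_M,\infty)$). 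Your approach avoids the explicit solution formula for $Z$ and the somewhat delicate two-function comparison; the paper's approach, in exchange, produces an explicit quantitative lower bound $Z_0$ for $\inf Y$ in terms of the data at $x_0$. In a full write-up you should only add a remark on regularity (wherever $Y>0$ the equation bootstraps $Y$ to $C^3$, so $(\star)$ may indeed be differentiated) and note explicitly that the first-zero argument for $u'$ is anchored by $u'(x_0)>0$, which is precisely the third inequality in \eqref{eq:a2}.
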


\begin{proof}

Let 
\begin{eqnarray}\label{eq:b1}
Y(x_0)=Y_0, \quad Y^{\prime}(x_0)=Y^{\prime}_0,
\end{eqnarray}
and assume that \eqref{eq:a2} hold.
Then, as follows from  \eqref{eq:a1} and the last inequality in \eqref{eq:a2}
\begin{eqnarray}\label{eq:b2}
\Lambda \frac{\left(Y_0^{\prime}\right)^2}{Y_0}-Y_0^{\prime}-Y_0^{\alpha}<\Lambda Y^{\prime\prime}(x_0)-Y_0^{\prime}-Y_0^{\alpha}=0,
\end{eqnarray}
and hence
\begin{eqnarray}\label{eq:b3}
\Lambda \left( \frac{Y_0^{\prime}}{Y_0}\right)^2-\left( \frac{Y_0^{\prime}}{Y_0}\right)-Y_0^{\alpha-1}<0.
\end{eqnarray}
Therefore,
\begin{eqnarray}\label{eq:b4}
\mu_{-}<\frac{Y_0^{\prime}}{Y_0}<0<\mu_{+},
\end{eqnarray}
where
\begin{eqnarray}\label{eq:b5}
\mu_{\pm}=\frac{1\pm \sqrt{1+4\Lambda Y_0^{\alpha-1}}}{2\Lambda},
\end{eqnarray}\label{eq:b6}
are roots of the quadratic equation
\begin{eqnarray}\label{eq:b7}
\Lambda \mu^2-\mu-Y_0^{\alpha-1}=0.
\end{eqnarray}

Next observe that a linear equation
\begin{eqnarray}\label{eq:b8}
\Lambda Z^{\prime\prime}-Z^{\prime}-Y_0^{\alpha-1} Z=0,
\end{eqnarray}
satisfying
\begin{eqnarray}\label{eq:b9}
Z(x_0)=Y_0, \quad Z^{\prime}(x_0)=-\gamma Y_0,
\end{eqnarray}
has the following solution
\begin{eqnarray}\label{eq:b10}
&&Z(x)=\left(\frac{2\Lambda Y_0}{\sqrt{1+4\Lambda Y_0^{\alpha-1}}} \right)\exp\left(\frac{x-x_0}{2\Lambda} \right) \cosh\left( \frac{\sqrt{1+4\Lambda Y_0^{\alpha-1}}}{2\Lambda}(x-x_0)\right)\times\nonumber\\
&&\left\{ |\mu_{-}|+\frac{1}{2\Lambda}-\left(\frac{1}{2\Lambda}+\gamma\right)\tanh\left( \frac{\sqrt{1+4\Lambda Y_0^{\alpha-1}}}{2\Lambda}(x-x_0)\right)\right\}.
\end{eqnarray}

In view that for $x\ge 0$   $\cosh(x)\ge 1,~0\le \tanh(x) <1$  we have from \eqref{eq:b10} that
\begin{eqnarray}\label{eq:b12}
Z(x)>Z_0=\frac{2\Lambda Y_0}{\sqrt{1+4\Lambda Y_0^{\alpha-1}}}\left( |\mu_{-}|-\gamma\right), \quad x\in[x_0,\infty).
\end{eqnarray}
Hence, $Z(x)$ is uniformly bounded away from zero on $[x_0,\infty)$ provided 
\begin{eqnarray}\label{eq:b13}
\gamma < |\mu_{-}|.
\end{eqnarray}

Let $x_M$ be the largest $x>x_0$ such that 
\begin{eqnarray}\label{eq:b14}
 0<Y(x)<Y_0 \quad x\in I=(x_0,x_M).
 \end{eqnarray}
Note that the existence of such $x_M$ is guaranteed since $Y(x_0)=Y_0$ and $Y^{\prime}(x_0)<0.$ 

We claim that  for any solution of \eqref{eq:a1} satisfying \eqref{eq:a2} 
the following inequalities hold
\begin{eqnarray}\label{eq:b15}
Y(x)>Z(x), \quad Y^{\prime}(x)>Z^{\prime}(x),
\end{eqnarray}
 for $x\in I.$ Here $Z$ is the solution of \eqref{eq:b8}, \eqref{eq:b9} with arbitrary
\begin{eqnarray}\label{eq:b16}
\gamma\in\left( \frac{|Y^{\prime}_0|}{Y_0}, |\mu_{-}| \right),
\end{eqnarray}
fixed.
Indeed, in view of the facts that $Y(x_0)=Z(x_0)$ and   $ Z^{\prime}(x_0)<Y^{\prime}(x_0)<0$ (which is guaranteed by our choice of parameter $\gamma$) we first observe
 that  \eqref{eq:b15}  holds
for $x\in(x_0,x_0+\delta)$ with $\delta>0$ sufficiently small. Assume now $x^*$ is a smallest value of $x\in I$ at which  \eqref{eq:b15} is violated. Consider two possibilities of how it can happen. In the first 
one we have
\begin{eqnarray}\label{eq:b17}
Y(x^*)=Z(x^*) \quad \mbox{and} \quad Y^{\prime}(x^*)>Z^{\prime}(x^*),
\end{eqnarray}
but then 
\begin{eqnarray}\label{eq:b18}
Y(x^*-\eps)<Z(x^*-\eps),
\end{eqnarray}
for $\eps>0$ sufficiently small. This contradicts the definition of $x^*$ and hence this situation is impossible.
The second possibility is
\begin{eqnarray}\label{eq:b19}
Y(x^*)\ge Z(x^*) \quad \mbox{and} \quad Y^{\prime}(x^*)=Z^{\prime}(x^*),
\end{eqnarray}
In this case we have
\begin{eqnarray}\label{eq:b20}
\Lambda Y^{\prime\prime}(x^*)=Y^{\prime}(x^*)+Y^{\alpha}(x^*)>Y^{\prime}(x^*)+Y_0^{\alpha-1} Y(x^*)\ge Z^{\prime}(x^*)+Y_0^{\alpha-1} Z(x^*)=\Lambda Z^{\prime\prime}(x^*),
\end{eqnarray}
and thus
\begin{eqnarray}\label{eq:b21}
Y^{\prime\prime}(x^*)>Z^{\prime\prime}(x^*).
\end{eqnarray}
But then
\begin{eqnarray}\label{eq:b22}
Y^{\prime}(x^*-\eps)<Z^{\prime}(x^*-\eps).
\end{eqnarray}
for $\eps>0$ sufficiently small, which again contradicts the definition of $x^*.$

Hence for $x\in I$ we have $Y_0>Y(x)\ge Z_0>0$ and thus by continuity
\begin{eqnarray}\label{eq:b23}
Z_0\le  Y(x) \le Y_0 \quad x\in[x_0,x_M].
\end{eqnarray}

Clearly at $x=x_M$  we necessarily have
\begin{eqnarray}\label{eq:b24}
Y(x_M)=Y_0, \quad Y^{\prime}(x_M)\ge 0.
\end{eqnarray}
By \eqref{eq:a1}  for $x>x_M$ we have
\begin{eqnarray}\label{eq:b25}
Y^{\prime}(x)=Y^{\prime}(x_M)e^{\frac{x-x_M}{\Lambda} }+\Lambda^{-1}\int_{x_M}^x Y^{\alpha}(s)e^{\frac{x-s}{\Lambda} }ds.
\end{eqnarray}
We now claim that $Y(x)>Y_0$ for $x>x_M.$ Indeed, by continuity $Y(x)>Y_0/2$ for $x\in[x_M, x_M+\delta]$ for some possibly small $\delta>0.$
Then by \eqref{eq:b25} we have
\begin{eqnarray}\label{eq:b25a}
Y^{\prime}(x)\ge\left(\frac{Y_0}{2}\right)^{\alpha} \left( e^{\frac{x-x_M}{\Lambda} }-1\right) \quad x \in (x_M,x_M+\delta].
\end{eqnarray}
This implies that
\begin{eqnarray}
Y^{\prime}(x)>0 \quad \mbox{on} \quad (x_M,x_M+\delta],
\end{eqnarray}
and hence
\begin{eqnarray}
Y(x)>Y_0 \quad \mbox{on} \quad (x_M,x_M+\delta].
\end{eqnarray}
Thus,
\begin{eqnarray}\label{eq:b25aa}
Y(x_M+\delta)>Y_0, \quad Y^{\prime}(x_M+\delta)>0.
\end{eqnarray}
Now assume that $x_M^*$ is the first point greater than $x_M+\delta$ such that $Y^{\prime}(x_M^*)=0,$
but by \eqref{eq:b25} and \eqref{eq:b25aa}
\begin{eqnarray}\label{eq:b25a}
Y^{\prime}(x_M^*)\ge Y^{\prime}(x_M+\delta) e^{\frac{x_M^*-(x_M+\delta)}{\Lambda}}  +Y_0^{\alpha}\left(  e^{\frac{x_M^*-(x_M+\delta)}{\Lambda}} -1 \right) >0,
\end{eqnarray}
and hence $Y(x)$ is strictly increasing on $(x_M,\infty).$ Thus
\begin{eqnarray}\label{eq:b26}
Y(x)>Y_0 \quad x\in(x_M,\infty).
\end{eqnarray}
Combining \eqref{eq:b23} and \eqref{eq:b26} we have 
\begin{eqnarray}
Y(x)\ge Z_0>0 \quad x\in[x_0,\infty),
\end{eqnarray}
that imply \eqref{eq:a3}  and thus completes the proof.

\end{proof}

\begin{corollary}\label{cor:1}
It follows immediately from Lemma \ref{l:1} that the solution of \eqref{eq:16} must satisfy
\begin{eqnarray}\label{eq:a4}
w^{\prime\prime}(x) w(x)\le \left( w^{\prime}(x)\right)^2, \quad \forall x\in(-\infty,0).
\end{eqnarray}
Indeed, if there was a point $x_0<0$ where \eqref{eq:a4} didn't hold, then it would imply that
$w(x)>0$ for $x\in[x_0,0].$ Thus such $w$  would  not satisfy boundary condition $w(0)=0$ and hence would not solve \eqref{eq:16} which gives a contradiction.
\end{corollary}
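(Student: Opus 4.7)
My plan is to handle the two limit assertions separately from the strict monotonicity claim. For the limits I would follow exactly the sandwich and splitting arguments already in Lemma~\ref{l:0}: the bound $e^{-x}<\phi(x)<1$ gives $\lim_{x\to 0^+}\phi(x)=1$, while splitting the numerator integral at $s=-x+\sqrt{\rho(x)}$, with $\rho(x):=\int_{-x}^0 w^\alpha(s)\,ds/w^\alpha(-x)\to\infty$, yields $\lim_{x\to\infty}\phi(x)=0$. The real work is the strict decrease of $\phi$.

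Write $\phi=N/D$ with $N,D$ the integrals in \eqref{eq:17}. Direct differentiation gives $D'(x)=w^\alpha(-x)$ and $N'(x)=w^\alpha(-x)-N(x)$, so $\phi'(x)<0$ is equivalent to $N(x)D(x)>w^\alpha(-x)(D(x)-N(x))$. After the shift $t=s+x$ and setting $g(t):=w^\alpha(t-x)$ for $t\in[0,x]$, this reads
\begin{equation*}
\left(\int_0^x g(t)\,dt\right)\left(\int_0^x g(t)\,e^{-t}\,dt\right) > g(0)\int_0^x g(t)(1-e^{-t})\,dt.
\end{equation*}
By Corollary~\ref{cor:1}, $\log w$ is concave on $(-\infty,0)$, hence $g$ is log-concave on $[0,x]$, decreasing, with $g(0)=w^\alpha(-x)>0$ and $g(x)=0$.

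The key move is a probabilistic reinterpretation. Define $F(t):=g(t)/g(0)$ on $[0,x]$, extended by $0$ on $(x,\infty)$; then $F$ is a log-concave decreasing function with $F(0)=1$, so it is the survival function of a random variable $T$ supported in $[0,x]$. Set $a:=\mathbb{E}T=\int_0^x F\,dt$ and $b:=\mathbb{E}(1-e^{-T})=\int_0^\infty F(t)\,e^{-t}\,dt$. Dividing the displayed inequality by $g(0)^2$, the task reduces to showing $ab>a-b$, equivalently
\begin{equation*}
b>\frac{a}{1+a}=\int_0^\infty e^{-t/a}\,e^{-t}\,dt,
\end{equation*}
in which the right-hand side is exactly the value of $b$ computed for an exponential with the same mean $a$. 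This I would prove by a single-crossing argument. The function $\log F(t)+t/a$ is concave on $[0,x)$, vanishes at $t=0$ and tends to $-\infty$ as $t\to x^-$; if it remained nonpositive, then $F\le e^{-t/a}$ on $[0,x]$ together with $F\equiv 0$ on $(x,\infty)$ would give $a=\int_0^\infty F<\int_0^\infty e^{-t/a}\,dt=a$, a contradiction. Hence there is a unique $t_*\in(0,x)$ with $F(t_*)=e^{-t_*/a}$, and $F-e^{-t/a}$ is strictly positive on $(0,t_*)$ and strictly negative on $(t_*,\infty)$. Since $\int_0^\infty(F-e^{-t/a})\,dt=0$, subtracting a constant multiple of this identity gives
\begin{equation*}
b-\frac{a}{1+a}=\int_0^\infty(F-e^{-t/a})(e^{-t}-e^{-t_*})\,dt,
\end{equation*}
whose integrand is nonnegative (the two factors change sign together at $t_*$) and strictly positive on a set of positive measure, yielding $b>a/(1+a)$ and therefore $\phi'(x)<0$.

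The step I expect to be delicate is the passage from the algebraic integral inequality to this single-crossing comparison: it relies essentially on Corollary~\ref{cor:1} (log-concavity of $w$, hence of $w^\alpha$, hence of $F$), and the strictness of the inequality comes precisely from the bounded support of $F$, which forces $F\not\equiv e^{-t/a}$ and so converts the Chebyshev-type bound into a strict one. A secondary technical point is making sure the single crossing is genuinely unique, which is where the finite endpoint $x<\infty$ and the boundary values $F(0)=1$, $F(x)=0$ of the survival function are used.
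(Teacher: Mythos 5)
Your proposal does not prove the statement at hand. The statement is Corollary \ref{cor:1}: the solution $w$ of \eqref{eq:16} satisfies $w''(x)\,w(x)\le (w'(x))^2$ for all $x\in(-\infty,0)$. What you have written is instead an argument for Proposition \ref{p:main} (the limits and strict monotonicity of $\phi$), and, critically, in the middle of it you invoke ``By Corollary~\ref{cor:1}, $\log w$ is concave\ldots'' --- that is, you assume exactly the inequality you were asked to establish. Read as a proof of the corollary, the proposal is circular and contains no content bearing on why \eqref{eq:a4} holds; the log-concavity of $w$ (equivalently of $w^{\alpha}$) is an input to your argument, never an output.

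The missing idea is the one supplied by Lemma \ref{l:1}. Suppose \eqref{eq:a4} failed at some $x_0<0$. Since $w$ is positive and strictly decreasing on $(-\infty,0)$, one would then have $w(x_0)>0$, $w'(x_0)<0$ and $w''(x_0)\,w(x_0)>(w'(x_0))^2$, which are precisely the hypotheses \eqref{eq:a2} of Lemma \ref{l:1} for the equation \eqref{eq:a1} satisfied by $w$. The lemma then gives $\inf_{x\in[x_0,\infty)}w(x)>0$, in particular $w(0)>0$, contradicting the boundary condition $w(0)=0$ in \eqref{eq:16}. Some argument of this type (a barrier or comparison statement showing that a single violation of \eqref{eq:a4} propagates and prevents $w$ from reaching zero) is indispensable, and nothing in your proposal supplies it or a substitute. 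Your single-crossing, survival-function argument for the strict decrease of $\phi$ may well have independent value as an alternative to the paper's rearrangement proof of Proposition \ref{p:main} via Lemma \ref{l:2}, but it belongs to that proposition and cannot stand in for the corollary.
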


We note that formula \eqref{eq:a4} has a clear geometric interpretation. Consider problem \eqref{eq:16} as a dynamical system on the plane. Introducing  change of variables
\begin{eqnarray}\label{eq:cl1}
t=x, \quad q(t)=w(x), \quad p(t)=w'(x), 
\end{eqnarray}
we obtain   the following system of two first order ODE's:
\begin{eqnarray}\label{eq:cl2}
\left\{
\begin{array}{l}
\dot{q}(t)=p(t),\\
\Lambda \dot{p}(t)=p(t)+q^{\alpha}(t).
\end{array}
\right.
\end{eqnarray}
where dot stands for the derivative with respect to $t$.
This problem is considered for $t<0$ in the quadrant $Q=\{{q} \geqslant0, { p} \leqslant0\}.$
Rewriting  $p$ and $q$  in polar coordinates we have
\begin{eqnarray}\label{eq:cl3}
q(t):=r(t)\cos(\vartheta(t)), \quad p(t):= r(t) \sin(\vartheta(t)), 
\end{eqnarray}
where, $t\in(-\infty,0),\; r>0,\; \theta \in (-\pi/2,0)$.
In terms of the new variables condition \eqref{eq:a4}  is equivalent to 
\begin{eqnarray}\label{eq:cl3a}
\dot{\vartheta}(t)\le 0.
\end{eqnarray}
We present derivation of this formula in the  appendix.

\begin{lemma}\label{l:2}
Let $w$ be the solution of \eqref{eq:16} and set
\begin{eqnarray}\label{eq:a5}
f(x):=\left\{
\begin{array}{ll}
w^{\alpha}(x), & x\le 0,\\
0,& x>0.
\end{array}
\right.
\end{eqnarray}
For any  fixed  $y>0,$ the function
\begin{eqnarray}\label{eq:a6}
\psi(x,y)=\frac{\int_{-x+y}^{\infty} f(s)ds}{\int_{-x}^{\infty} f(s)ds},
\end{eqnarray}
is non-decreasing function of $x$ on $(0,\infty)$ and strictly increasing on $(y,\infty).$ 
\end{lemma}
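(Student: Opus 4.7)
The plan is to split the argument into two regimes. For $0 < x \le y$ the lower limit $-x+y$ of the numerator integral lies in $[0,\infty)$, where $f \equiv 0$ by definition, so $\psi(x,y) \equiv 0$ on $(0,y]$ and the non-decreasing assertion there is trivial. The substantive content is therefore strict monotonicity on $(y,\infty)$, which I would establish by direct differentiation; non-decreasingness on all of $(0,\infty)$ then follows by combining the two regimes with the one-sided continuity $\psi(x,y) \to 0$ as $x \to y^+$.

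For $x > y$ both integrals in the ratio reduce to integrals of $w^\alpha$ over finite subintervals of $(-\infty,0)$. Differentiating $\psi$ with respect to $x$, the sign of $\partial_x \psi(x,y)$ is the sign of $H(-x+y) - H(-x)$, where
$$H(z) := \frac{w^\alpha(z)}{\int_z^0 w^\alpha(s)\, ds}, \qquad z \in (-\infty,0).$$
Since $-x+y > -x$, strict monotonicity of $\psi$ on $(y,\infty)$ follows once $H$ is shown to be strictly increasing on $(-\infty,0)$. A useful simplification comes from integrating the ODE $\Lambda w'' - w' = w^\alpha$ from $z$ to $0$, using $w(0)=w'(0)=0$, which yields the identity
$$\int_z^0 w^\alpha(s)\, ds \;=\; w(z) - \Lambda w'(z) \;=\; w(z) + \Lambda|w'(z)|.$$
Substituting this into $H$ and differentiating, the condition $H'(z) > 0$ reduces to the pointwise inequality
$$\eta(z) \;:=\; w^{\alpha+1}(z) + \alpha\, w(z)\, w'(z) - \alpha\Lambda\, w'(z)^2 \;>\; 0, \qquad z \in (-\infty,0).$$

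The main obstacle is proving this inequality, and my plan is to derive a linear first-order ODE for $\eta$ with a one-signed forcing term. Differentiating $\eta$ and substituting $\Lambda w'' = w' + w^\alpha$ to eliminate $w''$, after the various cancellations I expect an identity of the form
$$\eta'(z) - \frac{\eta(z)}{\Lambda} \;=\; (1-\alpha)\, w^\alpha(z) \left( w'(z) - \frac{w(z)}{\Lambda} \right),$$
whose right-hand side is strictly negative on $(-\infty,0)$ because $w > 0$, $w' < 0$, and $\alpha \in (0,1)$. Since $w(0)=w'(0)=0$ gives the terminal condition $\eta(0)=0$, multiplying by the integrating factor $e^{-z/\Lambda}$ and integrating from $z$ to $0$ yields the explicit representation
$$\eta(z) \;=\; -\, e^{z/\Lambda} \int_z^0 e^{-s/\Lambda}\, (1-\alpha)\, w^\alpha(s) \left( w'(s) - \frac{w(s)}{\Lambda} \right) ds \;>\; 0$$
for all $z < 0$. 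The hardest part will be the algebraic verification that $\eta'$ simplifies precisely to the stated linear ODE; once that is in hand, strict positivity of $\eta$, strict monotonicity of $H$, and hence strict monotonicity of $\psi$ on $(y,\infty)$, are immediate. Note that this route bypasses Corollary \ref{cor:1} entirely, since the Corollary supplies only an upper bound on $w^{\alpha+1}$ of the same structure as the quantity to be bounded below.
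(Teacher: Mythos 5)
Your proposal is correct, and it establishes the key inequality \eqref{eq:a9} — equivalently, strict monotonicity of your $H$ — by a genuinely different route from the paper. The paper proves \eqref{eq:a9} by showing $f=w^\alpha$ is log-concave ($g=(\log f)'$ non-increasing, which is Corollary \ref{cor:1} and hence rests on the comparison argument of Lemma \ref{l:1}), then integrating the ratio inequality $f(s+y-x)/f(y-x)\le f(s-x)/f(-x)$ over $s\in(0,x-y)$; strictness there enters only at the last step through $\int_{-x}^{-y}f<\int_{-x}^{0}f$. You instead use the first integral $\int_z^0 w^\alpha = w(z)-\Lambda w'(z)$ to write $H(z)=w^\alpha(z)/(w(z)-\Lambda w'(z))$, reduce $H'>0$ to $\eta:=w^{\alpha+1}+\alpha ww'-\alpha\Lambda (w')^2>0$, and obtain this from a linear ODE. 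I verified the algebra you flagged as the hard step: with $\Lambda w''=w'+w^\alpha$ one gets $(w-\Lambda w')'=-w^\alpha$, the numerator of $H'$ is exactly $w^{\alpha-1}\eta$, and indeed $\eta'-\eta/\Lambda=(1-\alpha)w^\alpha\left(w'-w/\Lambda\right)$, which is strictly negative on $(-\infty,0)$ since $w>0>w'$; with $\eta(0)=0$ the integrating factor gives $\eta>0$ on $(-\infty,0)$, hence $H$ strictly increasing and $\partial_x\psi>0$ for $x>y$. The trade-off: your argument is self-contained for this lemma (it bypasses Lemma \ref{l:1} and Corollary \ref{cor:1} entirely, and your remark that the Corollary only yields the opposite-direction bound $w^{\alpha+1}\le\Lambda(w')^2-ww'$ is accurate) and it produces strictness pointwise; the paper's argument is more structural — log-concavity of the reaction term implying monotonicity of the hazard-type ratio — but requires the auxiliary comparison lemma and extracts strictness only from the tail of the integral. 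Both are valid proofs of the same statement.
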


\begin{proof}
First we observe that 
\begin{eqnarray}\label{eq:a7}
\psi(x,y)\equiv 0 \quad \mbox{for} \quad x\le y.
\end{eqnarray}
Hence, $\psi(x,y)$ is non-decreasing in $x$ for $x\in(0,y].$

For $x>y$ we have,
\begin{eqnarray}\label{eq:a8}
\frac{\partial}{\partial x} \psi(x,y)= \frac{f(y-x) \int_{-x}^\infty f(s) ds-f(-x) \int_{y-x}^\infty f(s) ds}{\left(\int_{-x}^\infty f(s) ds\right)^2}.
\end{eqnarray}
Thus to show that $\psi(x,y)$ is strictly increasing in $x$ for $x\in(y,\infty),$ we need to prove the following inequality
\begin{eqnarray}\label{eq:a9}
\frac{\int_{y-x}^\infty f(s) ds}{f(y-x)}<\frac{\int_{-x}^\infty f(s) ds}{f(-x)} \qquad x\in(y,\infty).
\end{eqnarray}

Define 
\begin{eqnarray}\label{eq:a10}
g(x):=\frac{f^{\prime}(x)}{f(x)}=\alpha \frac{w^{\prime}(x)}{w(x)} \quad x\in(-\infty,0).
\end{eqnarray}
Observe that 
\begin{eqnarray}\label{eq:a11}
g^{\prime}(x)=\frac{\alpha}{w^2(x)}\left(w^{\prime\prime}(x)w(x) -\left(w^{\prime}(x)\right)^2 \right),
\end{eqnarray}
and thus by Corollary \ref{cor:1} 
\begin{eqnarray}\label{eq:a12}
g^{\prime}(x) \le 0 \quad \mbox{on} \quad (-\infty,0).
\end{eqnarray}
This in particular implies  that for $x>y>0$ we have
\begin{eqnarray}\label{eq:a13}
g(s+y-x) \le g(s-x) \quad s\in(0,x-y).
\end{eqnarray}
Consequently,
\begin{eqnarray}\label{eq:a14}
\int_0^s g(\tau+y-x)d\tau \le \int_0^s g(\tau-x)d\tau.
\end{eqnarray}
By the definition of the function $g$ (see Eq. \eqref{eq:a10})  we have
\begin{eqnarray}\label{eq:a15}
&&\int_0^s g(\tau+y-x)d\tau= \int_0^s \frac{\frac{d}{d\tau} f(\tau+y-x) }{ f(\tau+y-x)}d\tau= \int_0^s \left( \frac{d}{d\tau} \log  f(\tau+y-x)\right) d\tau=\log\left( \frac{f(s+y-x)}{ f(y-x)}\right),\nonumber \\
&&\int_0^s g(\tau-x)d\tau= \int_0^s \frac{\frac{d}{d\tau} f(\tau-x) }{ f(\tau-x)}d\tau= \int_0^s \left( \frac{d}{d\tau} \log  f(\tau-x)\right) d\tau=\log\left( \frac{f(s-x)}{ f(-x)}\right).
\end{eqnarray}\label{eq:a16}
Therefore, by \eqref{eq:a14} and \eqref{eq:a15} we have that for $x>y>0$ and $s\in(0,x-y)$
\begin{eqnarray}\label{eq:a17}
\log\left( \frac{f(s+y-x)}{ f(y-x)}\right)\le \log\left( \frac{f(s-x)}{ f(-x)}\right),
\end{eqnarray}
that is
\begin{eqnarray}\label{eq:a18}
 \frac{f(s+y-x)}{ f(y-x)} \le \frac{f(s-x)}{ f(-x)}.
\end{eqnarray}
Using the facts that $f(x)\equiv 0$ for $x>0$  and $f(x)>0$ for $s\in(-y,0)$ for $y>0$  we observe that 
\begin{eqnarray}\label{eq:a19}
&& \int_0^{x-y} f(s+y-x)ds =\int_{y-x}^0 f(s)ds=\int_{y-x}^{\infty} f(s)ds, \nonumber \\
&& \int_0^{x-y} f(s-x)ds =\int_{-x}^{-y}  f(s)ds<\int_{-x}^{0} f(s)ds=\int_{-x}^{\infty} f(s)ds.
\end{eqnarray}
Hence integrating  \eqref{eq:a18} in $s$ from $0$ to $x-y$ and taking into account \eqref{eq:a19} we conclude that \eqref{eq:a9} holds for arbitrary $x>y>0$ and hence $\psi(x,y)$ is  indeed strictly increasing in $x$  for $x>y>0.$ 
\end{proof}
Now we are ready to give a proof of the proposition

\begin{proof}[Proof of Proposition \ref{p:main}]

Let 
\begin{eqnarray}\label{eq:a20}
\chi(x,y)=1-\psi(x,y)=\frac{\int_{-x}^{y-x} f(s)ds}{\int_{-x}^{\infty} f(s)ds}.
\end{eqnarray}
Observe that
\begin{eqnarray}\label{eq:a21}
&& \int_0^{\infty}\left\{\int_{-x}^{y-x} f(s)ds\right\} e^{-y}dy=\int_0^{\infty}\left\{ \int_{0}^{y} f(s-x) e^{-y} ds\right\} dy=\int_0^{\infty}  \left\{\int_{s}^{\infty} f(s-x) e^{-y} dy\right\} ds \nonumber \\
&&=\int_0^{\infty}  \left\{\int_{s}^{\infty}e^{-y} dy\right\}  f(s-x)  ds= \int_0^{\infty}     f(s-x)  e^{-s} ds=\int_{-x}^{\infty}     f(s)  e^{-(s+x)} ds.
\end{eqnarray}
Hence,
\begin{eqnarray}\label{eq:a22}
\int_0^{\infty} \chi(x,y) e^{-y}dy=\frac{\int_{-x}^{\infty}  f(s)  e^{-(s+x)} ds}{\int_{-x}^{\infty} f(s)ds}=\frac{\int_{-x}^{0}  w^{\alpha}(s)  e^{-(s+x)} ds}{\int_{-x}^{0} w^{\alpha}(s)ds}=\phi(x).
\end{eqnarray}
In computation above we used the definition of $f$ (see Eq. \eqref{eq:a5} ) and $\psi$ (see Eq.\eqref{eq:a6}).

In view of the statement of Lemma \ref{l:2}, $\chi(x,y)$ is non-increasing function of  $x$ on $(0,\infty)$ and  is strictly decreasing function of $x$ on $[y,\infty).$
Consequently, $\int_0^{\infty} \chi(x,y) e^{-y}dy$ is a strictly decreasing function, and therefore,  $\phi(x)$ is strictly decreasing as follows from \eqref{eq:a22}.

\end{proof}

\section{Dependency of the speed of propagation on parameters: asymptotics and numerics.}

In the previous sections, we established the uniqueness of a solution for problem \eqref{eq:7}. Our main result states that for an arbitrary fixed $\theta\in(0,1),$ $\Lambda\in(0,\infty)$ and $\alpha\in(0,1)$ there exists a unique pair $(c^*,R^*)$   for which this problem
admits a solution. This pair represents  the velocity of propagation and width of the reaction zone for this set of parameters. 
    The natural question is then to investigate the dependency of these quantities on the parameters of the problem,
that is, dependencies $c^*(\theta,\Lambda,\alpha)$ and $R^*(\theta,\Lambda,\alpha)$. Of a particular interest is how the velocity of propagation changes with the parameters of the problem as this quantity is the main characteristic of the flame front.

As we showed in the previous sections, $(c^*,R^*)$ are uniquely determined from the solution of problem \eqref{eq:16} that represents scaled distribution of the deficient reactant over the reaction zone.
Hence, we start with a discussion of qualitative properties of the solution of \eqref{eq:16} . While the solution  of this problem can not be obtained in the closed form, it can be computed numerically.
 Figure \ref{f:3} depicts function $w$ for different values of $\alpha$ and $\Lambda=1.$
\begin{figure}[h!]
\centering \includegraphics[width=4.in]{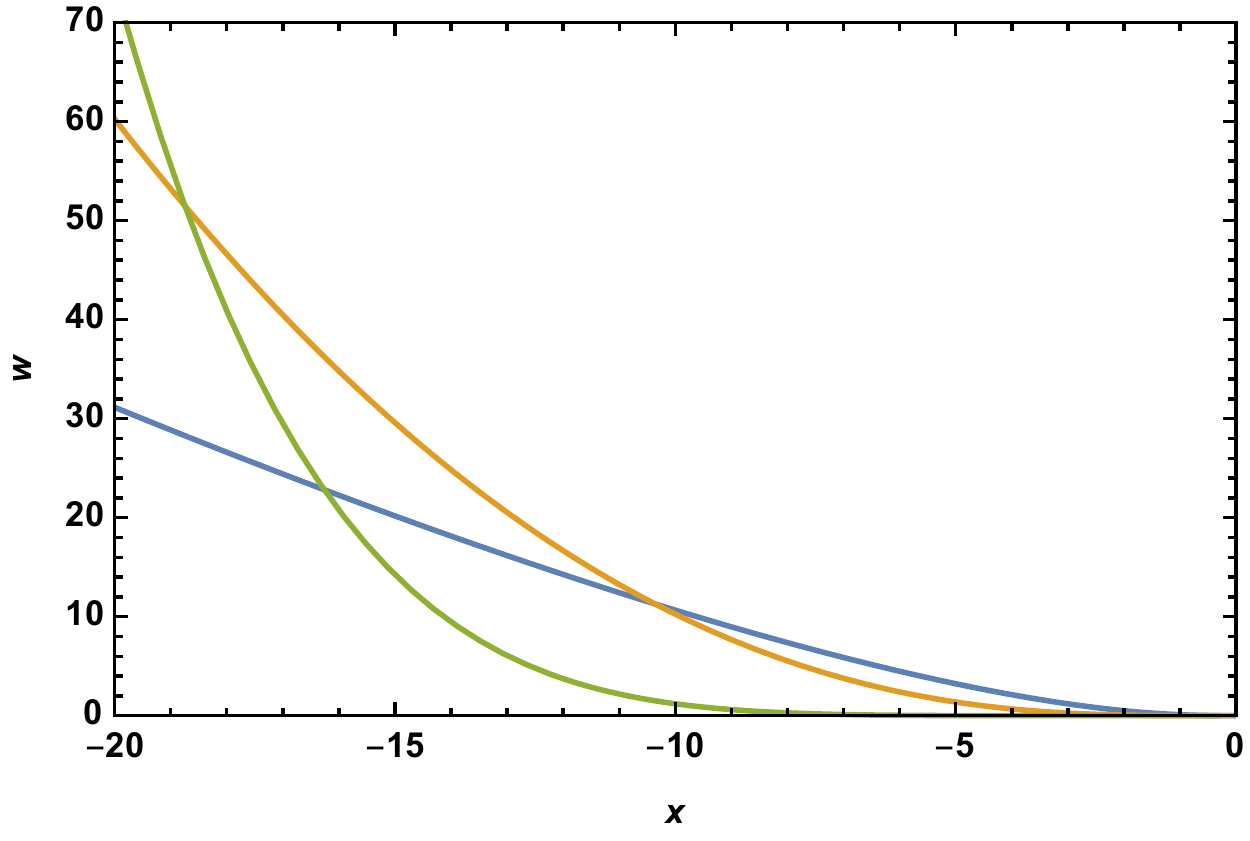}
\caption{Numerical solutions of problem \eqref{eq:16} for $\alpha=1/4$ (blue), $\alpha=1/2$ (orange) and $\alpha=3/4$ (green). 
   }
\label{f:3} 
\end{figure}
We note that solutions of  \eqref{eq:16} for $\Lambda\ne 1$ can be obtained from solutions of this equation with $\Lambda=1$ by rescaling. Indeed, one can verify by the direct substitution that
\begin{eqnarray}\label{eq:21a}
w(x\vert \Lambda,\alpha)={\Lambda}^\frac{1}{1-\alpha}w\left(\frac{x}{\Lambda}\Big\vert\Lambda=1, \alpha\right).
\end{eqnarray}
Observe that  solutions for \eqref{eq:16}  for different values of $\alpha$ are not ordered. Indeed, for fixed  $|x|>0$ sufficiently small,  $w(x\vert \Lambda, \alpha)$ is a decreasing function of $\alpha.$ Whereas for $|x|$ sufficiently large,
$w(x\vert \Lambda, \alpha)$ is increasing function of $\alpha$.
Formal considerations  (which can be made rigorous) show that asymptotic behavior of the solution of \eqref{eq:16} is as follows:
\begin{eqnarray}\label{eq:21}
&&w(x)=\left[\frac{(1-\alpha)^2}{2\Lambda(1+\alpha)}\right]^\frac{1}{1-\alpha} (-x)^\frac{2}{1-\alpha}(1+o(1)) \quad |x|\ll 1, \nonumber\\
&& w(x)=\left({1-\alpha}\right)^\frac{1}{1-\alpha} (-x)^\frac{1}{1-\alpha} (1+o(1)) \quad |x|\gg 1.
\end{eqnarray}
That is, as $\alpha$ increases, $w$ gets flatter and flatter near the origin and grows faster and faster for large values of $|x|$. Another important observation is that the solution \eqref{eq:16}  
near the origin is heavily influenced by the specific value of the Lewis number as in this region the solution is obtained (in the first approximation) by balancing diffusion and reaction
whereas the transport term is negligible. In contrast, far from the origin, the key ingredients are transport and reaction while diffusion is negligible in this region. Consequently, the solution of 
\eqref{eq:16}   is essentially independent of the Lewis number when $|x|\gg 1.$

Let us also discuss the limiting behavior of the function $w$ as $\alpha\to 0$ and $\alpha\to 1$.  It is straightforward to verify that as $\alpha\to 0$ the solution of \eqref{eq:16} approaches, on a compact sets, to the limiting profile
\begin{eqnarray}
w_0(x)=-x+\Lambda\left(\exp \left(\frac{x}{\Lambda}\right)-1\right),
\end{eqnarray}
that verifies the limiting problem
\begin{equation}
\left\{
\begin{array}{lr}
\Lambda  w_0^{\prime\prime}- w_0^{\prime} - 1=0, & x<0,\\
w_0(0)=0, & w_0^{\prime}(0)=0.
\end{array}
\right.
\end{equation}
We now claim that as $\alpha\to 1$ the function $w$ approaches zero on  compact sets. Indeed, let $ \eta=w-\Lambda w^{\prime}$. In view that $w(x)>0$ and $w^{\prime}(x)<0$ on $(-\infty ,0)$ and $w(0)=w^{\prime}(0)=0,$ we have
$\eta(x)>0$ on $(-\infty,0)$ as $\eta(0)=0.$ By \eqref{eq:16} we then have
\begin{eqnarray}
\left\{
\begin{array}{ll}
-\eta^{\prime}=w^{\alpha}, & x<0,\\
\eta(0)=0.
\end{array}
\right. 
\end{eqnarray}
Taking into account that $\eta,w$ are positive on $(-\infty,0)$ we the have
\begin{eqnarray}
-\eta^{\prime}=w^{\alpha}\le \eta^{\alpha}
\end{eqnarray}
Integrating this inequality and taking into account the initial condition we have
\begin{eqnarray}
\eta(x)\le \left[ (1-\alpha)(-x)\right]^\frac{1}{1-\alpha}, \quad x\le 0
\end{eqnarray}
Since $w<\eta$ we conclude
\begin{eqnarray}\label{eq:wup}
w(x)\le \left[ (1-\alpha)(-x)\right]^\frac{1}{1-\alpha}, \quad x\le 0.
\end{eqnarray}
Fixing $x<0$ and taking a limit $\alpha\to 1$ in the inequality above, we conclude that $w\to 0$ as $\alpha\to 0$ on compact sets.

Now we turn to the evaluation of  the pair $c^*(\theta,\Lambda,\alpha)$, $R^*(\theta,\Lambda,\alpha)$.
An algorithm for finding this  pair is rather straightforward. First, we fix $\Lambda$ and $\alpha$ and solve problem \eqref{eq:16}. Then, we follow the procedure outlined in the proof of the main Theorem \ref{t:main}. Namely,
using the solution of \eqref{eq:16}, we evaluate $\phi(x)$ given by \eqref{eq:17} and a function $\zeta(x)$ given as:
\begin{eqnarray}\label{eq:zeta}
\zeta(x)
=\left(\int_{-x}^0 w^{\alpha}(s)ds\right)^{\frac{1-\alpha}{2}}.
\end{eqnarray}
We next fix $\theta$ and find a unique number $\sigma^*>0$ such that
\begin{eqnarray}
\phi(\sigma^*)=\theta .
\end{eqnarray}
The existence and uniqueness of such  $\sigma^*$ follows from Proposition \ref{p:main}.
Hence as follows from \eqref{eq:19}
\begin{eqnarray}
c^*=\zeta(\sigma^*),
\end{eqnarray}
and then by \eqref{eq:20}
\begin{eqnarray}
R^*=\sigma^*/c^*.
\end{eqnarray}

The function  $\phi(x)$  is decreasing, and the function $\zeta(x)$ is increasing  on $x\in(0,\infty).$  
Moreover, by \eqref{eq:21a} we have
\begin{eqnarray}
\quad \zeta(x\vert \Lambda, \alpha)=\sqrt{\Lambda} \zeta\left( \frac{x}{\Lambda}\big\vert\Lambda= 1, \alpha\right).
\end{eqnarray}
Let us now discuss the dependency of $\phi,\zeta$ on parameter $\alpha$. Several profiles of  functions $\phi$ and $\zeta$ for several values of $\alpha$ and $\Lambda=1$ are depicted in Figures \ref{f:pz}. For a fixed $x\in(0,\infty),$ the function $\phi$ appears to be increasing in $\alpha$ and the 
function $\zeta$ decreasing in $\alpha.$ 
\begin{figure}[h]
\hspace{-.5cm}
\begin{minipage}[c]{0.4\textwidth}%
\includegraphics[width=3.5in]{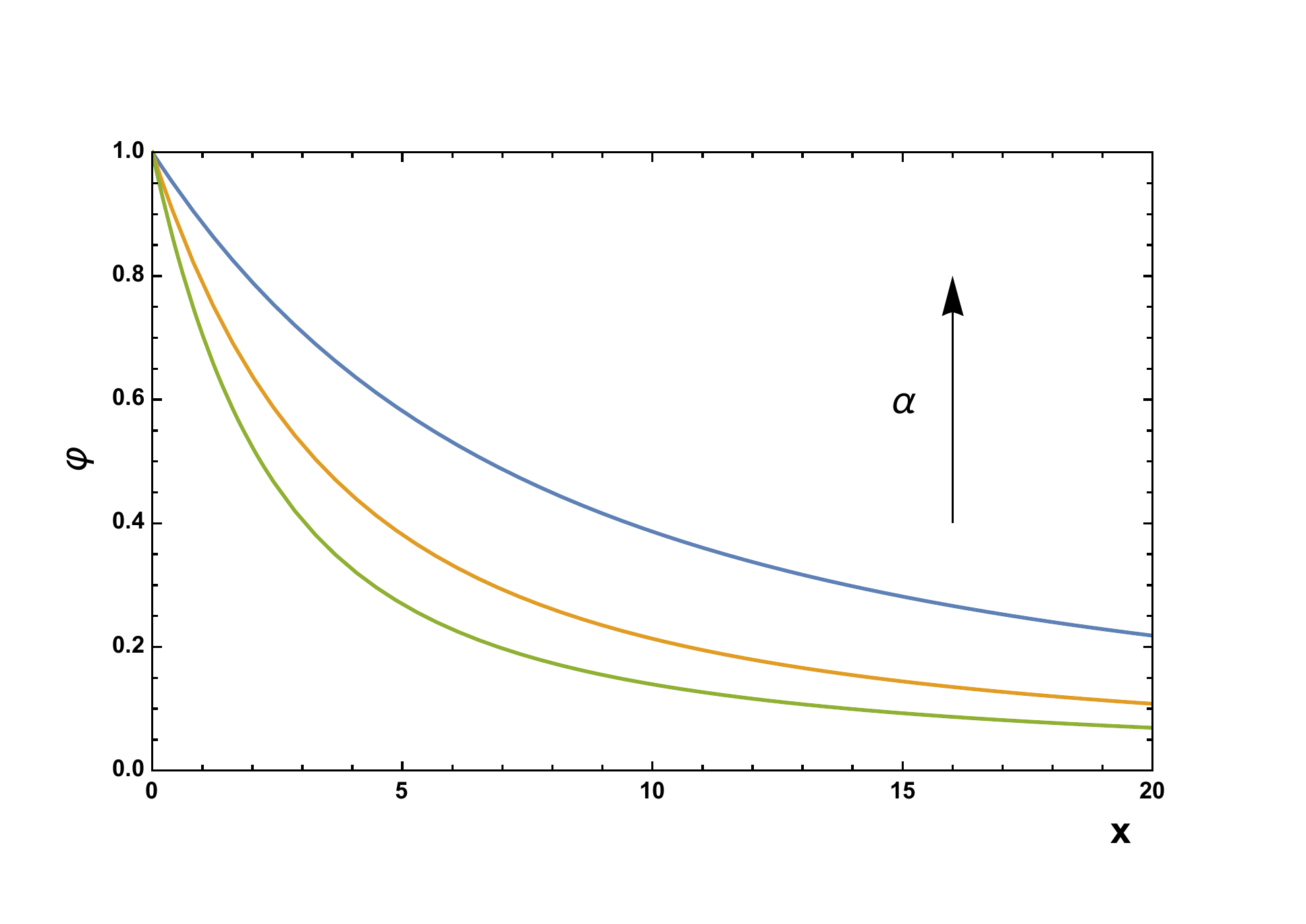}
\end{minipage}\hspace{1.2cm} %
\hspace{1cm}
\begin{minipage}[c]{0.4\textwidth}%
 \includegraphics[width=3.5in]{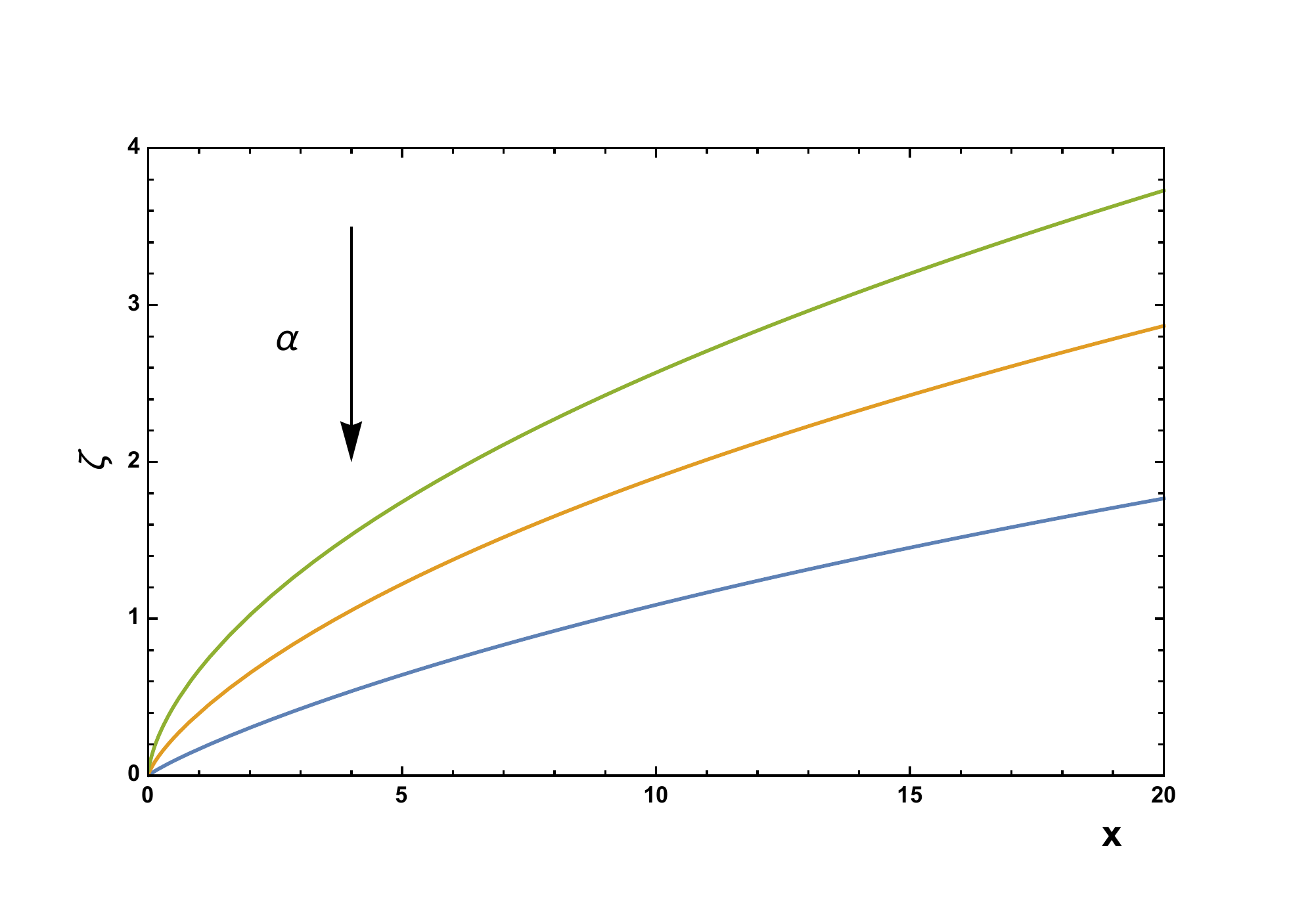} %
\end{minipage}\caption{{Functions $\phi$ and $\zeta$ for $\alpha=0.75$ (blue), $\alpha=0.5$ (orange) and $\alpha=0.25$ (green) and $\Lambda=1$. The arrow indicates direction of increase of $\alpha$.
}}
\noindent \label{f:pz} 
\end{figure}
It is also easy to check that on compact sets  $\phi(x)\to \frac{1-e^{-x}}{x}$ and $\zeta(x)\to \sqrt{x}$  as $\alpha\to 0$ which follows directly from the fact that $w$ approaches to $w_0$ in this limit.
We now claim that  $\phi(x)$ approaches  unity and $\zeta(x)$ approaches zero on any fixed compact subset of $x\in(0,\infty)$ as $\alpha\to 1.$
To see that let us observe first that by \eqref{eq:16} and \eqref{eq:17} after integration by parts we have:
\begin{eqnarray}
\phi(x)=1-\tilde \phi(x),
\end{eqnarray}
with
\begin{eqnarray}
\tilde \phi(x)=\frac{\Lambda+(1-\Lambda)\int_{-x}^0 w(s) \exp(-(s+x))ds/w(-x)}{1+\Lambda|w^{\prime}(-x)/w(-x)|}\le \frac{\Lambda+|1-\Lambda|}{1+\Lambda |w^{\prime}(-x)/w(-x)|}, \quad x>0,
\end{eqnarray}
where the last inequality follows from the monotonicity of $w$.
Next by \eqref{eq:16} and  \eqref{eq:a4} 
we obtain
\begin{eqnarray}
w^{\prime\prime}(x) w(x)=\frac{1}{\Lambda}\left(w^{\prime}(x)+w^{\alpha}(x)\right) w(x)\le( w^{\prime}(x))^2, \quad x<0.
\end{eqnarray}
Dividing the expression above by $w^2$ and taking into account that $w^{\prime}<0$ we have
\begin{eqnarray}
\Lambda \left( \frac{w^{\prime}(x) }{w(x)} \right)^2 +\left| \frac{w^{\prime}(x) }{w(x) }\right|\ge \frac{1}{w^{1-\alpha}(x)}, \quad x<0.
\end{eqnarray}
Combining this observation with \eqref{eq:wup} we have
\begin{eqnarray}
\Lambda \left( \frac{w^{\prime}(x) }{w(x)} \right)^2 +\left| \frac{w^{\prime}(x) }{w(x) }\right|\ge \frac{1}{(1-\alpha)(-x)}, \quad x<0.
\end{eqnarray}
Fixing $x<0$ and taking a limit as $\alpha\to1$ in the expression above, we conclude that $|w^{\prime}/w| \to \infty$ as $\alpha\to 1$ on compact sets.
This observation implies that $\tilde \phi(x)\to 0$ as $\alpha\to 1$ and hence $\phi(x)\to 1$ in this limit.
Finally by \eqref{eq:wup} and \eqref{eq:zeta} we have
\begin{eqnarray}
\zeta(x)\le \sqrt{(1-\alpha)} \sqrt{x}.
\end{eqnarray}
Taking a limit as $\alpha\to1$ for $x>0$ fixed we obtain $\zeta\to 0$ as $\alpha\to 1$ on compact sets.

We now will discuss the dependency of $\phi,\zeta$ of $\alpha$. Figure \ref{f:pz2} depicts  functions $\phi$ and $\zeta$ for several values of $\Lambda$ and  $\alpha=1/2.$    For a fixed $x\in(0,\infty),$ the function $\phi$ is  increasing in $\Lambda,$ and the 
function $\zeta$ is decreasing in $\Lambda.$ 

\begin{figure}[h]
\hspace{-.5cm}
\begin{minipage}[c]{0.4\textwidth}%
\includegraphics[width=3.5in]{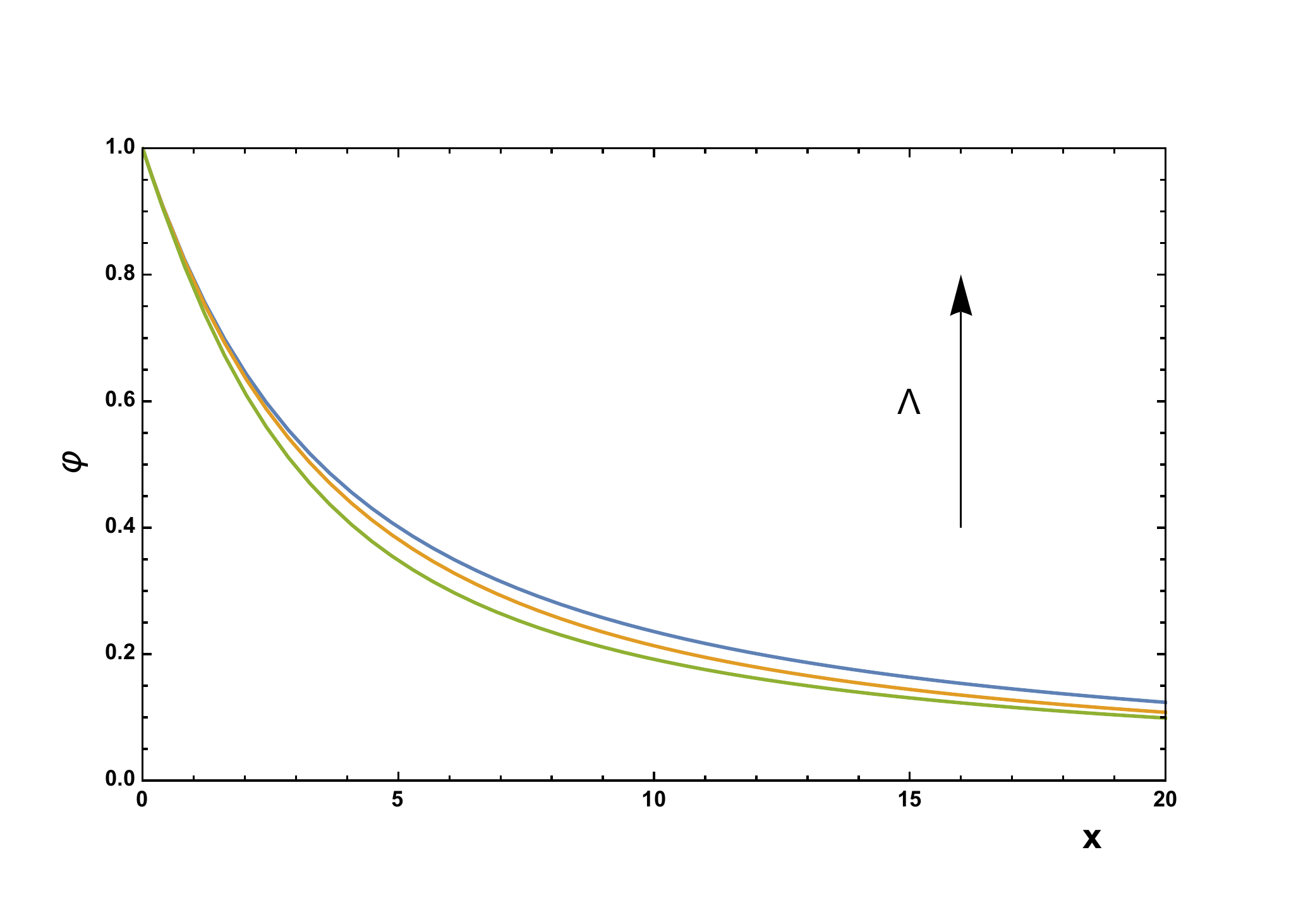}
\end{minipage}\hspace{1.2cm} %
\hspace{1cm}
\begin{minipage}[c]{0.4\textwidth}%
 \includegraphics[width=3.5in]{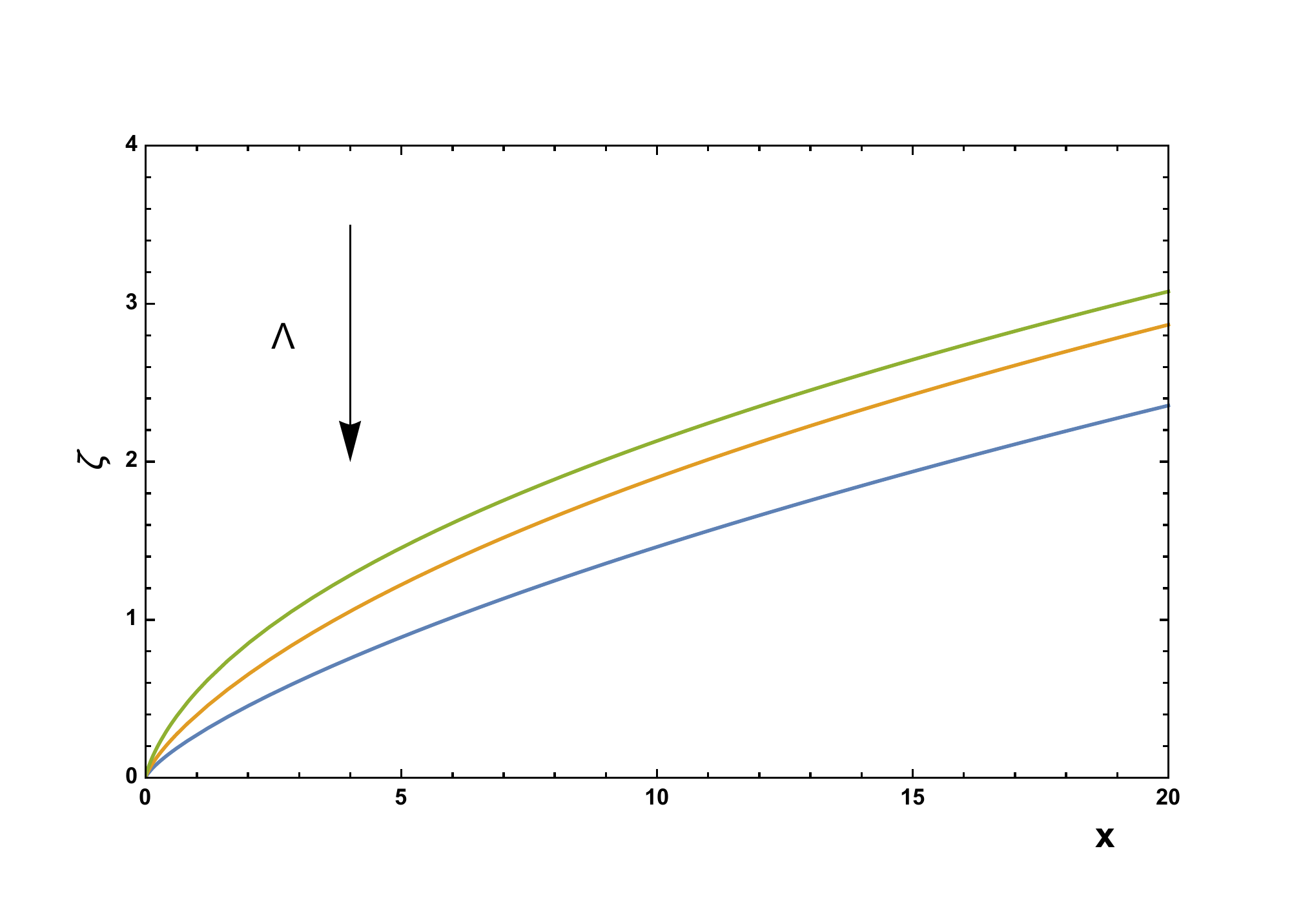} %
\end{minipage}\caption{{Functions $\phi$ and $\zeta$ for $\Lambda=5$ (blue), $\Lambda=1$ (orange) and $\Lambda=0.2$ (green) and $\alpha=1/2$. The arrow indicates direction of increase of $\Lambda$.
}}
\noindent \label{f:pz2} 
\end{figure}
One obvious observation that follows from the monotonicity  of $\phi$ is that $\sigma^*$ is a decreasing function of $\theta$ which together with the monotonicity of $\zeta$ immediately implies  that $c^*$ is a decreasing function of $\theta$.
This result is very natural in a physical context of the problem as an increase in ignition temperature decreases the speed of propagation.

Let us now discuss the behavior of $\phi(x), \zeta(x)$   near the origin which will allow us to obtain asymptotic expressions for $(c^*,R^*)$ for ignition temperatures near unity.
This regime is of particular interest as flame fronts are  known to become unstable when ignition temperature approaches one.  Discussion of flame front instabilities in this regime for the cases of zero and first order kinetics
can be found in \cite{cnf15}.

The behavior of functions $\phi(x)$ and $\zeta(x)$  for  small values of $x$ can be reconstructed from
asymptotic formula \eqref{eq:21}. After rather tedious but straightforward computations, we have:
\begin{eqnarray}\label{eq:as1}
\phi(x\vert \Lambda, \alpha)\approx 1-\left(\frac{1-\alpha}{2}\right) x, \quad \zeta(x\vert \Lambda, \alpha)=(2\Lambda)^{-\frac{\alpha}{2}}\left( \frac{(1-\alpha)^\frac{1+\alpha}{2}}{\sqrt{1+\alpha}} \right) x^\frac{1+\alpha}{2}  \quad \mbox{for} \quad x\ll 1.
\end{eqnarray}
This observation immediately implies that for  ignition temperatures close to unity we have:
\begin{eqnarray}\label{eq:as2}
&& c^*(\theta,\Lambda,\alpha)\simeq \sqrt{\frac{2}{1+\alpha}}\Lambda^{-\frac{\alpha}{2}}(1-\theta)^\frac{1+\alpha}{2},\quad R^*(\theta,\Lambda,\alpha)\simeq  \frac{\sqrt{2(1+\alpha)}}{1-\alpha}\Lambda^{\frac{\alpha}{2}}(1-\theta)^{\frac{1-\alpha}{2}}, \quad |1-\theta|\ll 1.
\end{eqnarray}
Direct verification shows  that, in this regime, $c^*$ is a decreasing function of  both $\alpha$ and $\Lambda,$ whereas $R^*$ is an increasing function of both of these parameters. Moreover, $R^*$ is decreasing function of $\theta$. Figure \ref{f:t1}  depicts
dependency of $c^*$ and $R^*$ on $\alpha$ for several values of $\Lambda$ with $\theta=0.98,$ and  Figure \ref{f:t13d} shows the dependency of the velocity $c^*$ on  $\alpha$ and $\Lambda$.
These figures were generated using asymptotic formulas \eqref{eq:as2} which are extremely close to their numerical counterparts .
\begin{figure}[h]
\hspace{-.5cm}
\begin{minipage}[c]{0.4\textwidth}%
\includegraphics[width=3.5in]{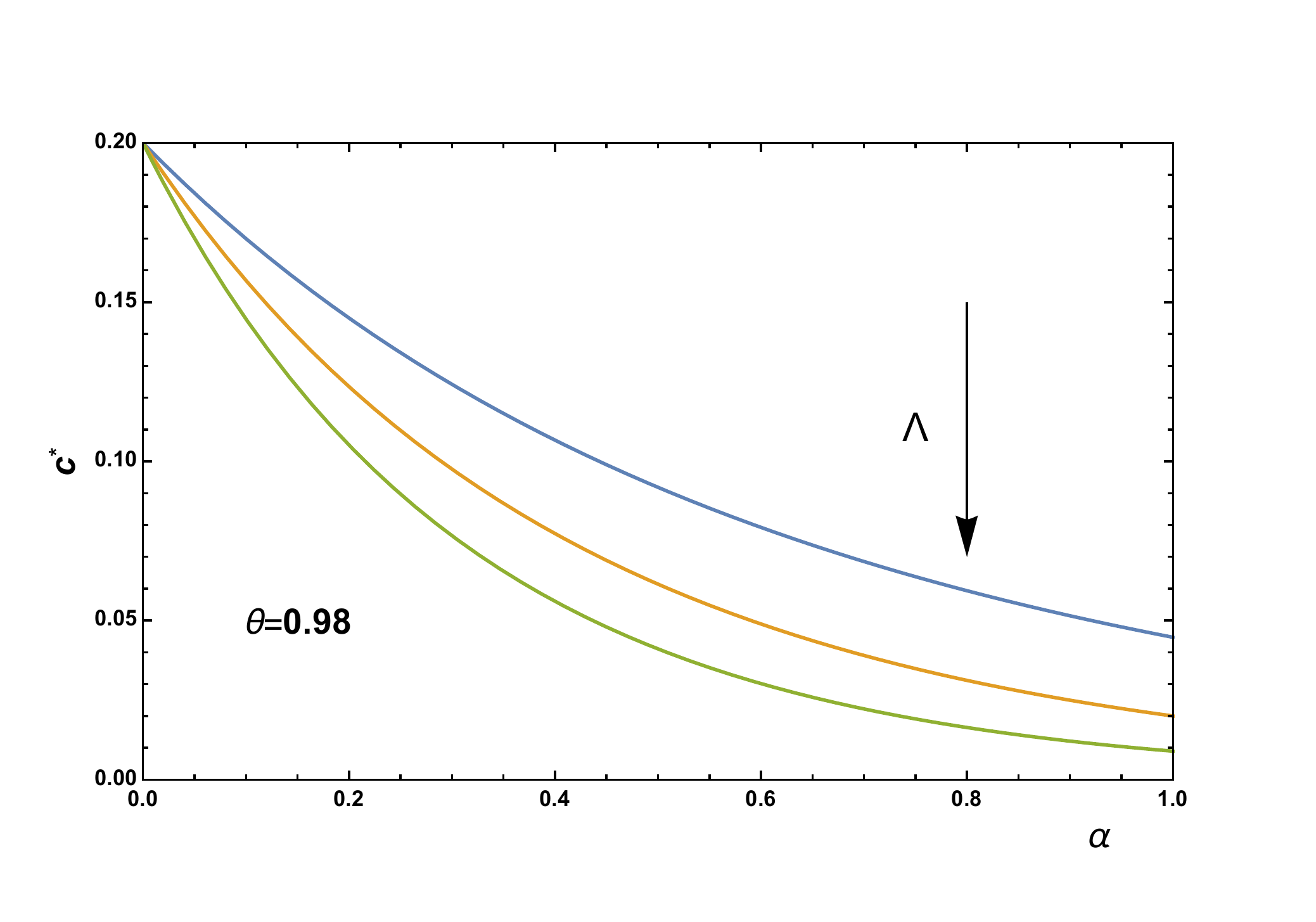}
\end{minipage}\hspace{1.2cm} %
\hspace{.5cm}
\begin{minipage}[c]{0.4\textwidth}%
 \includegraphics[width=3.5in]{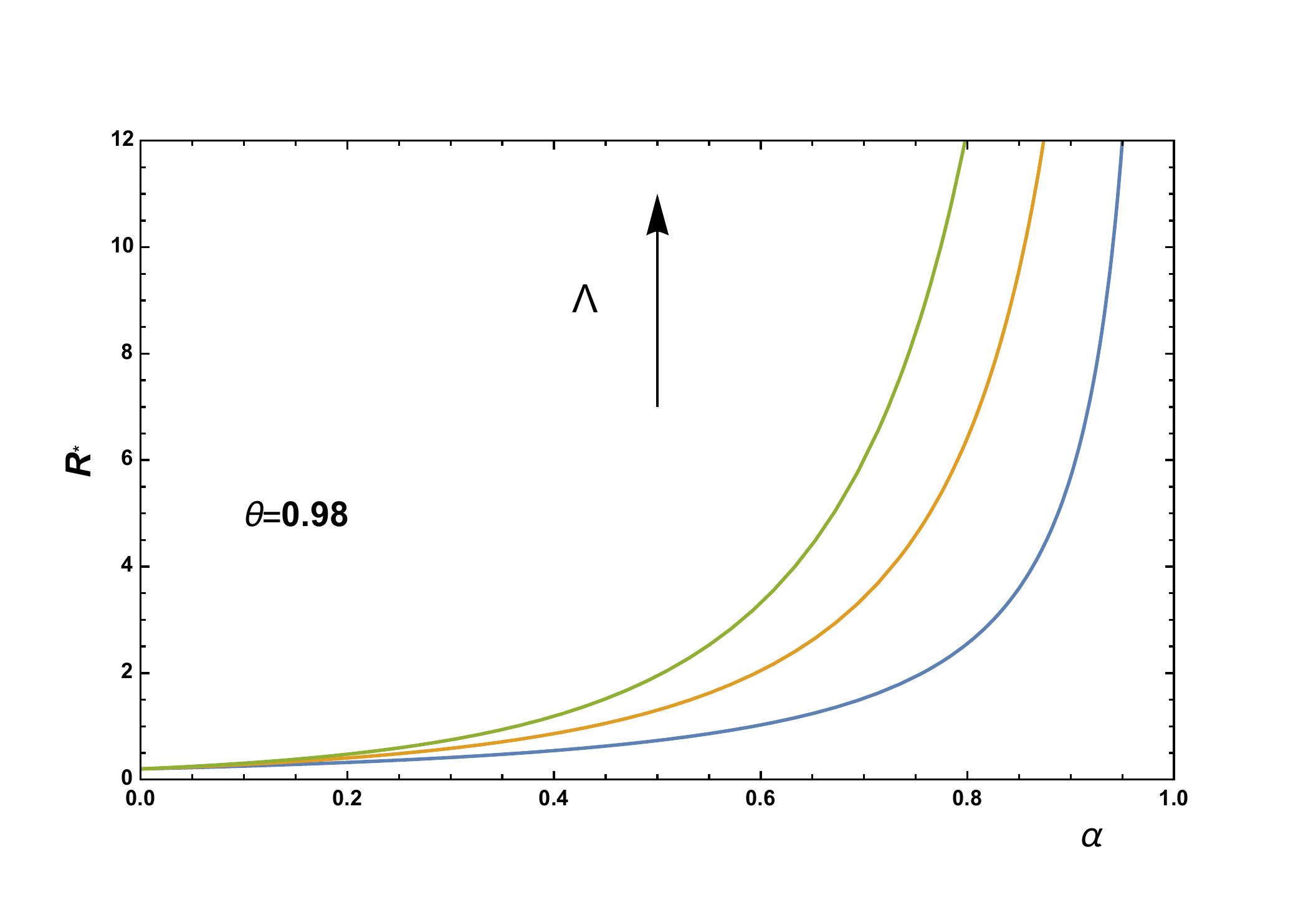} %
\end{minipage}\caption{{Dependency of the velocity of propagation $c^*$ and width of the reaction zone $R^*$ on $\alpha$ for $\Lambda=0.2$ (blue), $\Lambda=1$ (orange) and $\Lambda=5$ (green) for the ignition
 temperature $\theta=0.98$.  The arrow indicates direction of increase of $\Lambda.$}}
\noindent \label{f:t1} 
\end{figure}

 \begin{figure}[h]
\hspace{.5cm}
\begin{minipage}[c]{0.4\textwidth}%
\begin{center}
\includegraphics[width=3.2in]{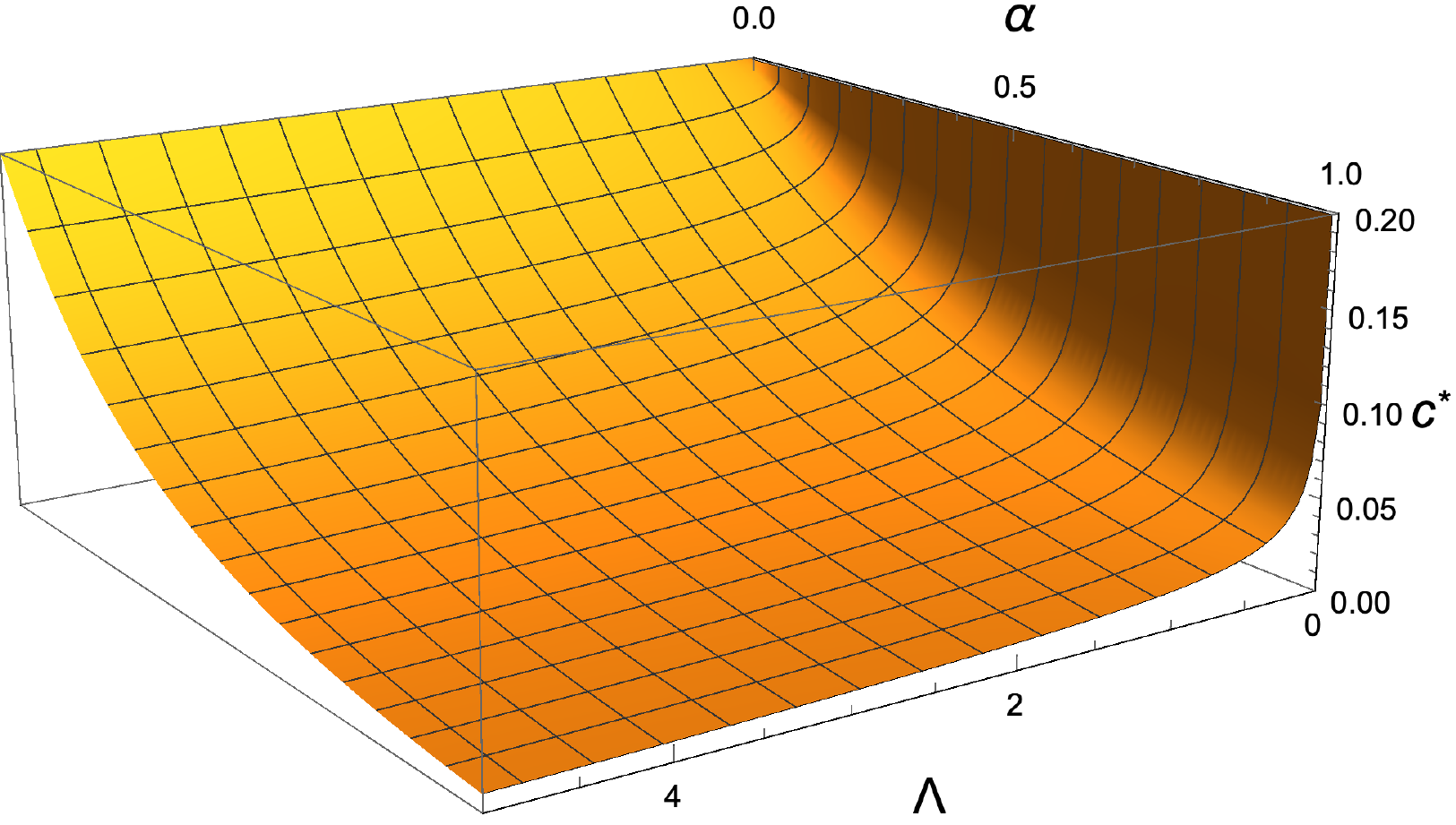}
\par\end{center}%
\end{minipage}\hspace{1.2cm} %
\hspace{1cm}
\begin{minipage}[c]{0.4\textwidth}%
 \includegraphics[width=2.3in]{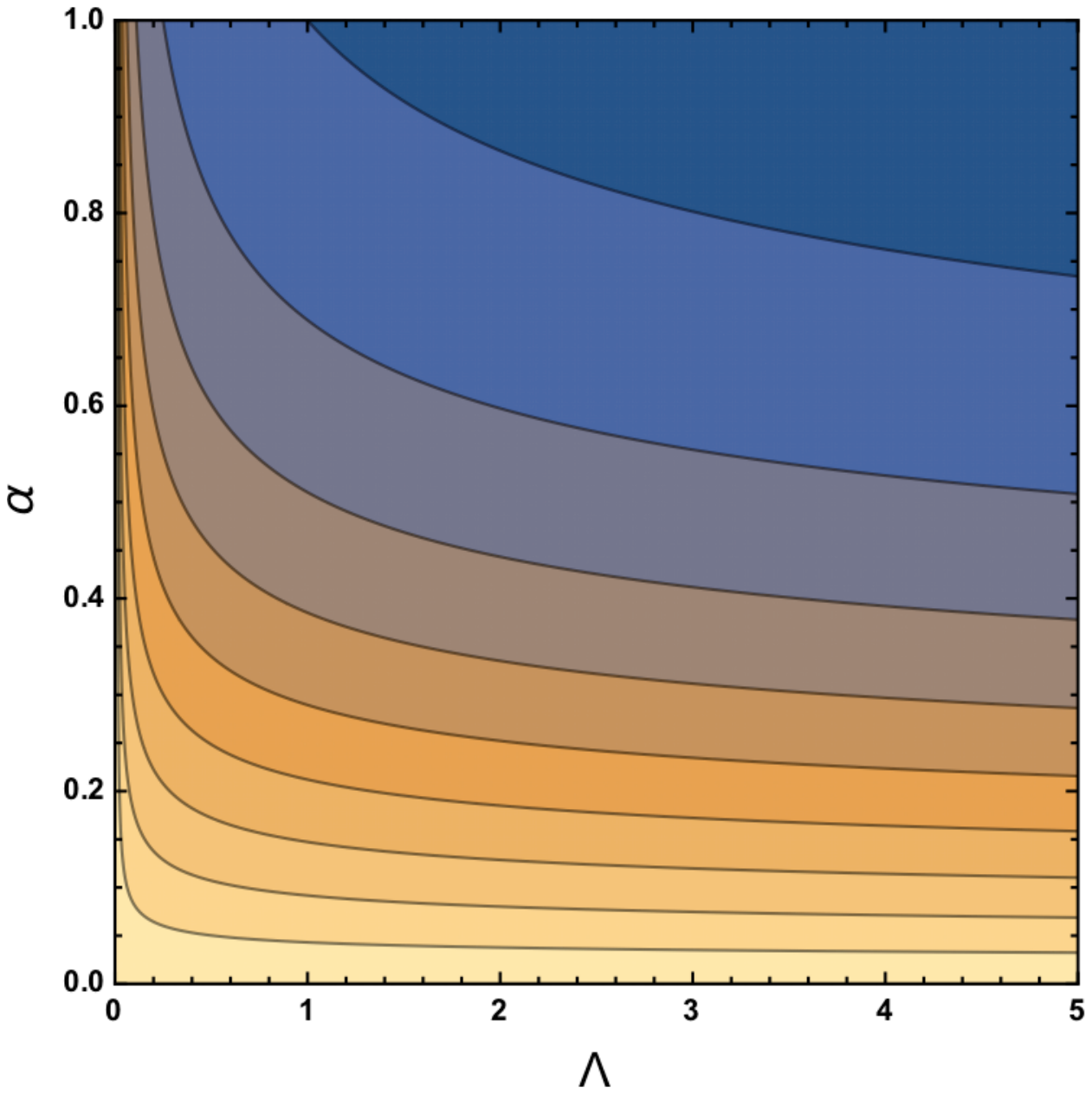} %
\end{minipage}\caption{{ Dependency of the velocity of propagation $c^*$  on $\alpha$ and $\Lambda$ for $\theta=0.98$. Left panel represents
a three dimensional plot of $c^*(\theta=0.98,\Lambda,\alpha)$ and the right plane depicts level sets of this function 
}}
\noindent \label{f:t13d} 
\end{figure}
Moreover, one can verify that  $c^*$ and $R^*$ given by \eqref{eq:as2} fully reproduce the limiting behavior of these functions in the limits  $\alpha\to0$ and $\alpha\to1$.
Indeed, when $\alpha=1,$ the velocity of propagation  and width of the reaction zone are  given by \cite{cnf15}
\begin{eqnarray}
c^*(\theta, \Lambda, \alpha=1)=\left(\left(\frac{\theta}{1-\theta}\right)+\Lambda \left(\frac{\theta}{1-\theta}\right)^2 \right)^{-\frac12}, \quad  R^*(\theta, \Lambda, \alpha=1)=\infty.
\end{eqnarray}
whereas when $\alpha=0,$ we have \cite{cnf15}
\begin{eqnarray}
c^*(\theta, \Lambda, \alpha=0)=R^*(\theta, \Lambda, \alpha=0)=\sqrt{\kappa}.
\end{eqnarray}
where $\kappa$ is defined implicitly as the positive solution of 
\begin{eqnarray}
e^\kappa=\frac{1}{1-\theta \kappa}.
\end{eqnarray}
For $\theta$ near unity these formulas give:
\begin{eqnarray}
c^*(\theta, \Lambda, \alpha=1)\simeq \frac{1-\theta}{\sqrt{\Lambda}},  \quad  c^*(\theta, \Lambda, \alpha=0)\simeq \sqrt{2(1-\theta)} \quad \mbox{for} \quad |\theta-1|\ll1.
\end{eqnarray}
Hence
\begin{eqnarray}
&& c^*(\theta,\Lambda,\alpha\to 1)\to c^*(\theta,\Lambda,\alpha=1), \quad R^*(\theta,\Lambda,\alpha\to 1)\to \infty, \nonumber \\
&&c^*(\theta,\Lambda,\alpha\to 0)\to c^*(\theta,\Lambda,\alpha=0), \quad  R^*(\theta,\Lambda,\alpha\to 0)  \to R^*(\theta,\Lambda,\alpha=0). 
\end{eqnarray}

Now consider the regime of intermediate values of $\theta$, we study this regime numerically.
 Figures \ref{f:t75}, \ref{f:t5}, \ref{f:t25} depict dependency  of the velocity of propagation and width of the reaction zone on
the reaction order $\alpha$ for $\Lambda=0.2,1, 5$ and $\theta=0.75, 0.5, 0.25,$ and Figure \ref{f:amd}  shows dependency of the velocity of propagation on $\alpha$ and $\Lambda$ for $\theta=0.5$.
According to numerics for intermediate values of $\theta,$ the character of the dependency of $c^*$ and $R^*$ on $\alpha$ and $\Lambda$ remains similar to the one for $\theta$ near unity.
However, dependency of $c^*$ on both $\alpha$ and $\Lambda$ becomes weaker as $\theta$ decreases. The dependency of $R^*$ on $\alpha$ in this regime is still very strong, but dependency on $\Lambda$ becomes weaker as
$\theta$ decreases.
 \begin{figure}[h]
\hspace{-.5cm}
\begin{minipage}[c]{0.4\textwidth}%
\includegraphics[width=3.5in]{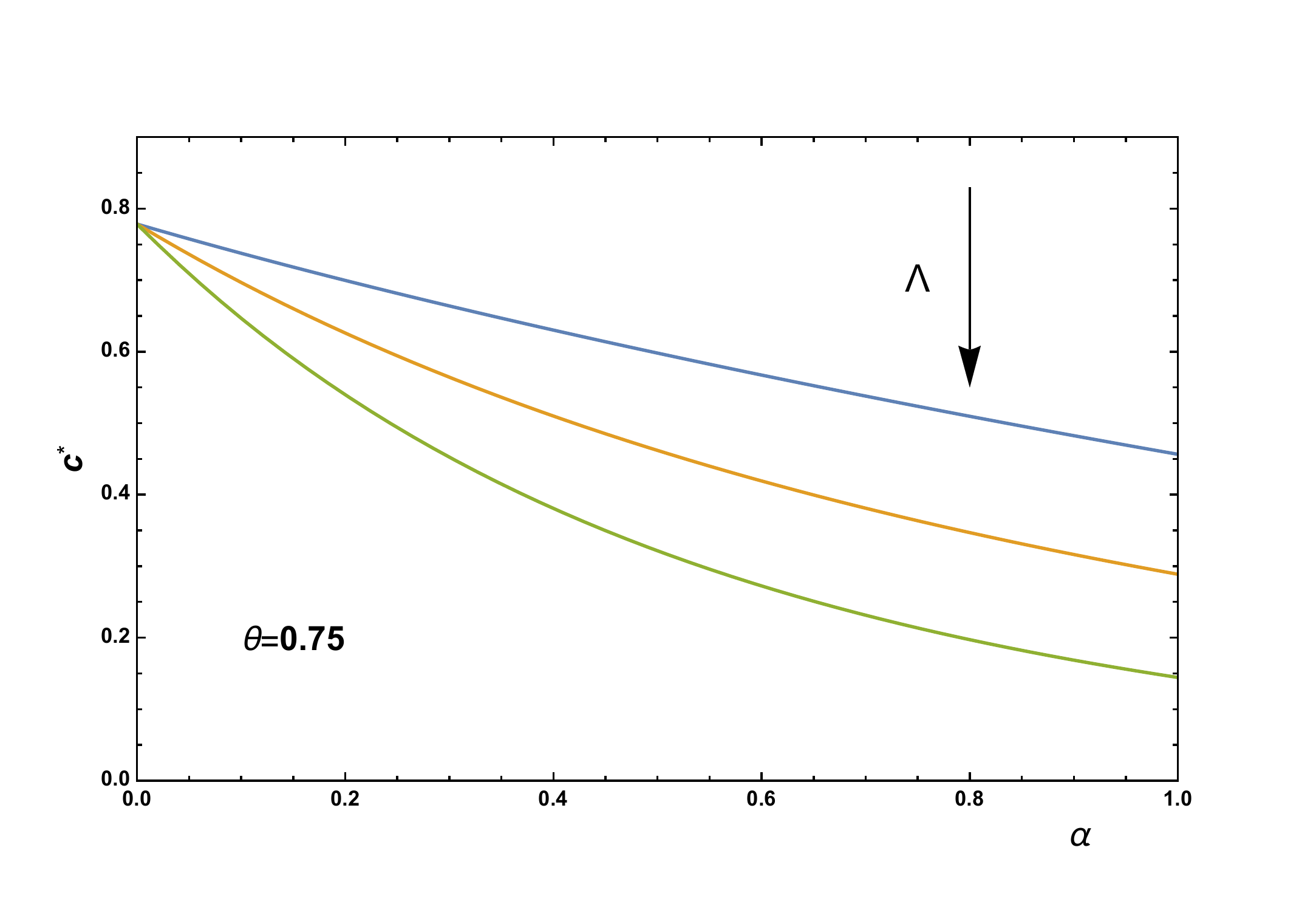}
\end{minipage}\hspace{1.2cm} %
\hspace{.5cm}
\begin{minipage}[c]{0.4\textwidth}%
 \includegraphics[width=3.5in]{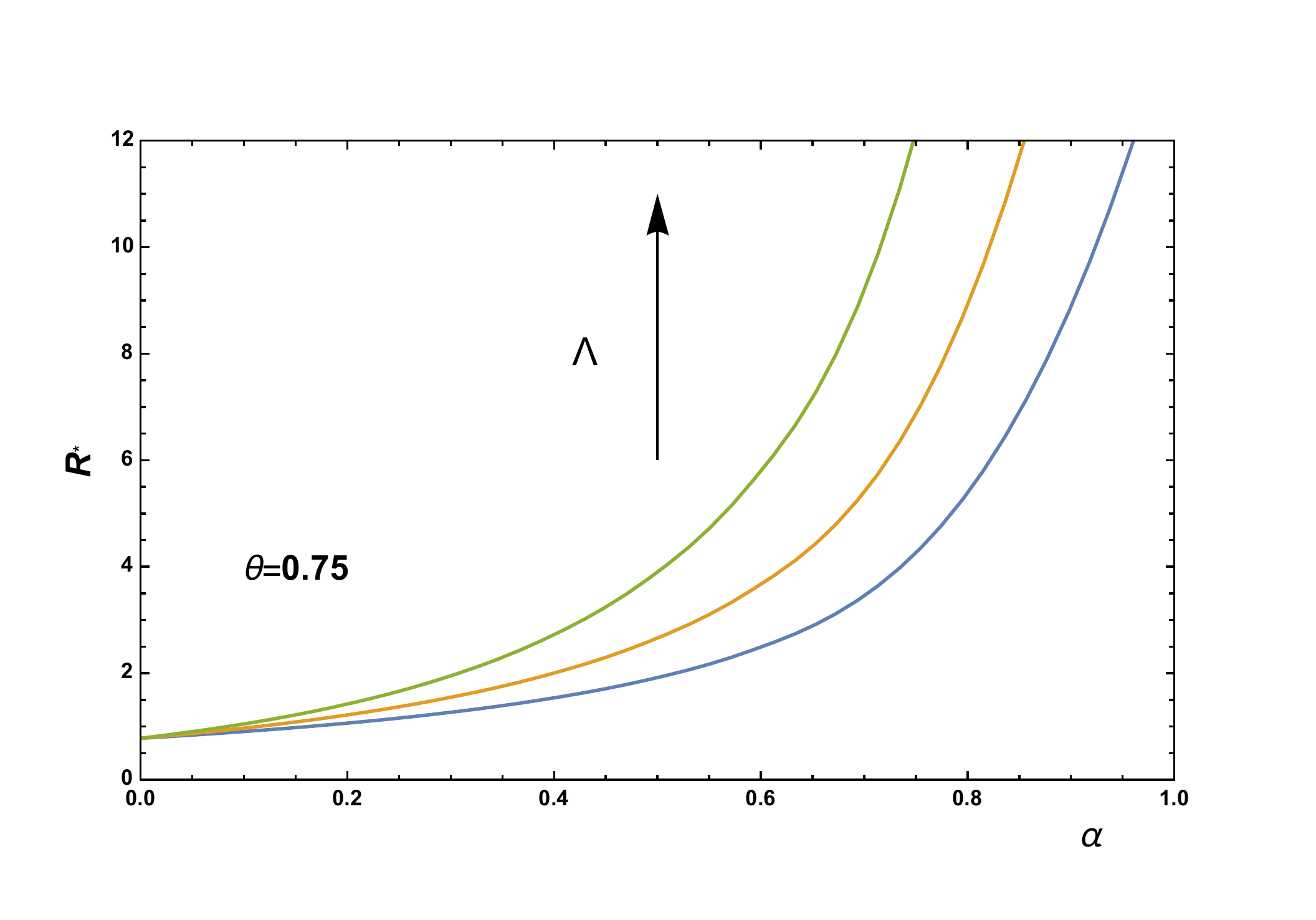} %
\end{minipage}\caption{{Dependency of the velocity of propagation $c^*$ and width of the reaction zone $R^*$ on $\alpha$ for $\Lambda=0.2$ (blue), $\Lambda=1$ (orange) and $\Lambda=5$ (green) for the ignition
 temperature $\theta=0.75$.  The arrow indicates direction of increase of $\Lambda.$}}
\noindent \label{f:t75} 
\end{figure}
\begin{figure}[h!]
\hspace{-.5cm}
\begin{minipage}[c]{0.4\textwidth}%
\includegraphics[width=3.5in]{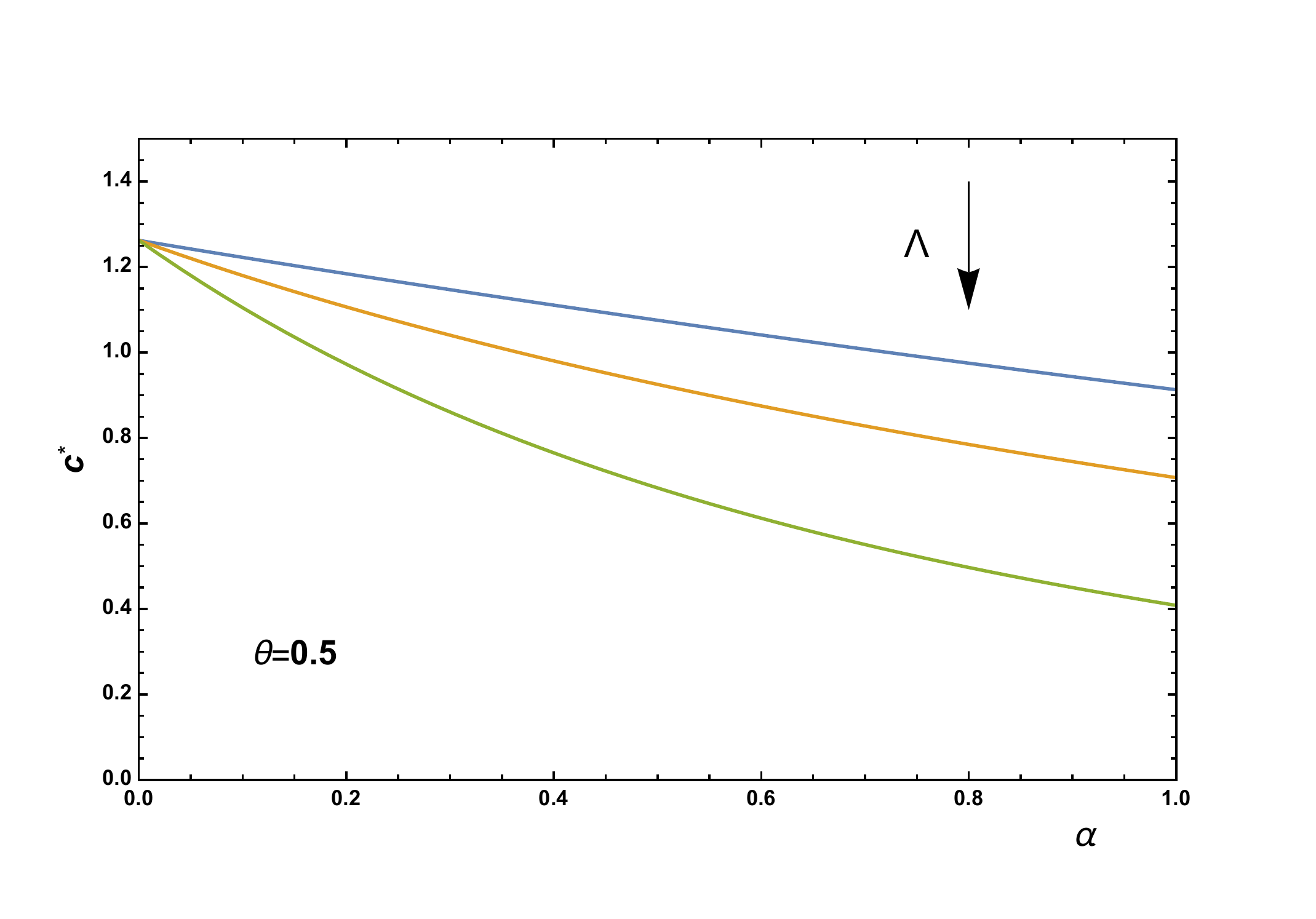}
\end{minipage}\hspace{1.2cm} %
\hspace{.5cm}
\begin{minipage}[c]{0.4\textwidth}%
 \includegraphics[width=3.5in]{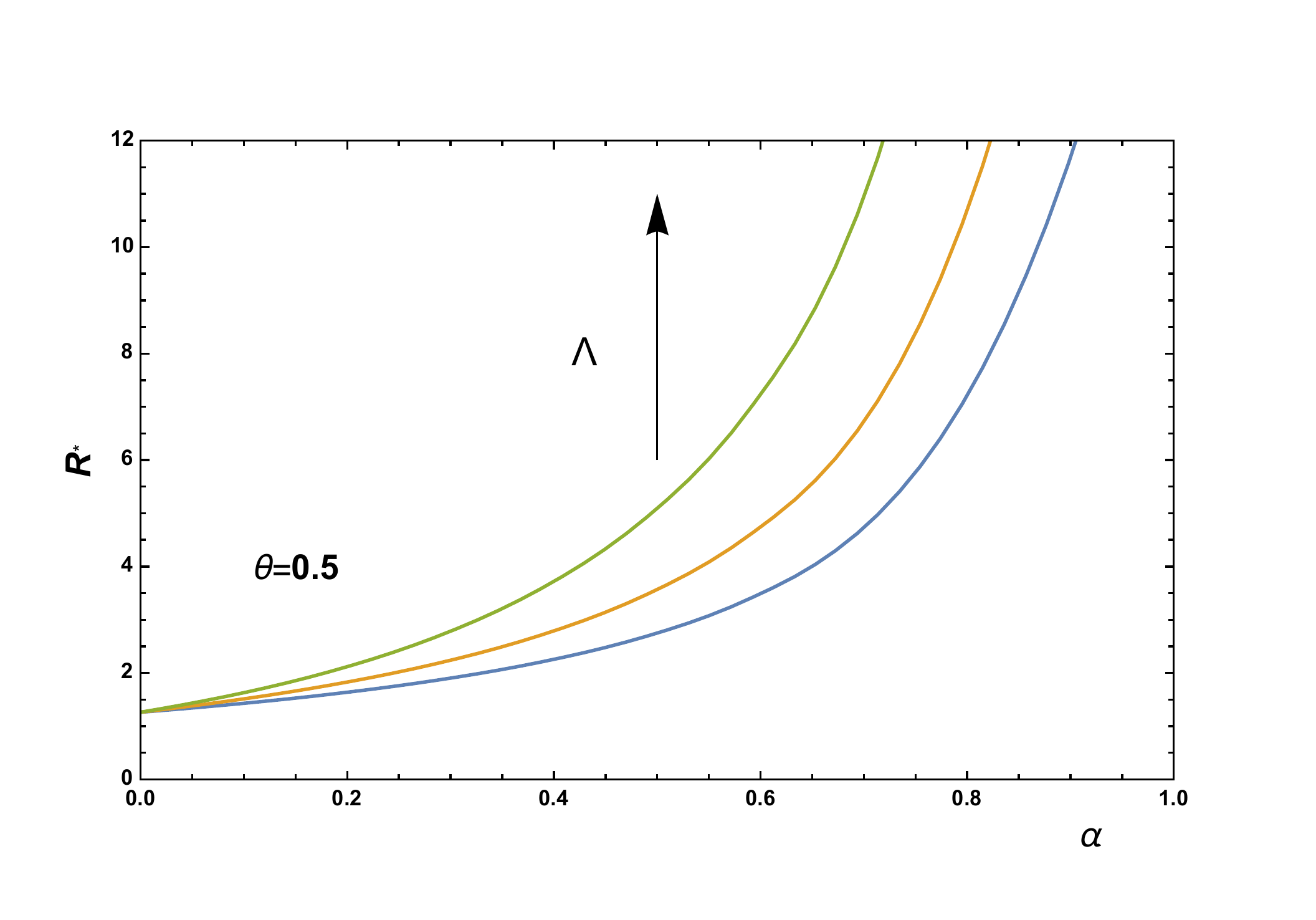} %
\end{minipage}\caption{{Dependency of the velocity of propagation $c^*$ and width of the reaction zone $R^*$ on $\alpha$ for $\Lambda=0.2$ (blue), $\Lambda=1$ (orange) and $\Lambda=5$ (green) for the ignition
 temperature $\theta=0.5$.  The arrow indicates direction of increase of $\Lambda.$}}
\noindent \label{f:t5} 
\end{figure}
\begin{figure}[h!]
\hspace{-.5cm}
\begin{minipage}[c]{0.4\textwidth}%
\includegraphics[width=3.5in]{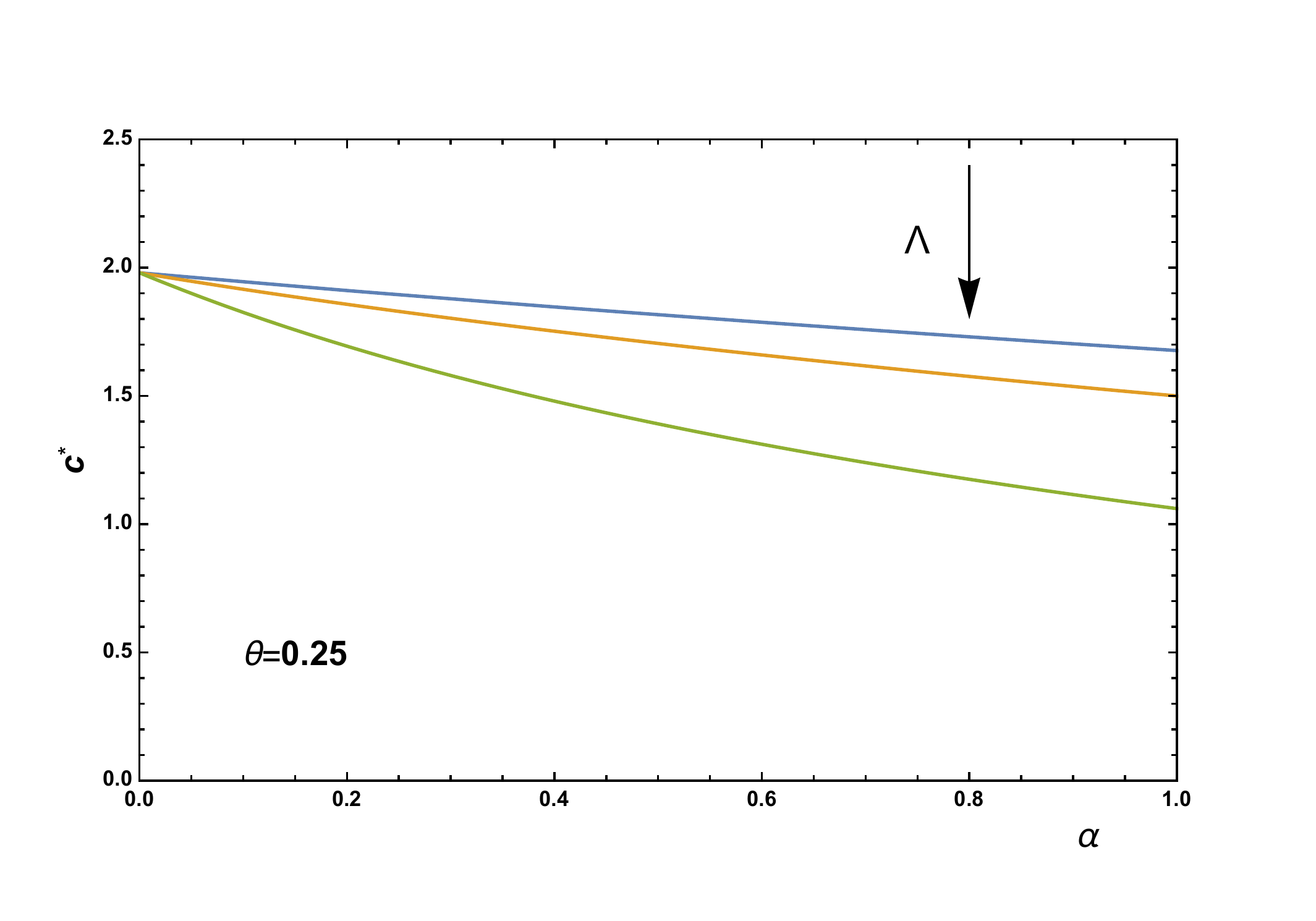}
\end{minipage}\hspace{1.2cm} %
\hspace{.5cm}
\begin{minipage}[c]{0.4\textwidth}%
 \includegraphics[width=3.5in]{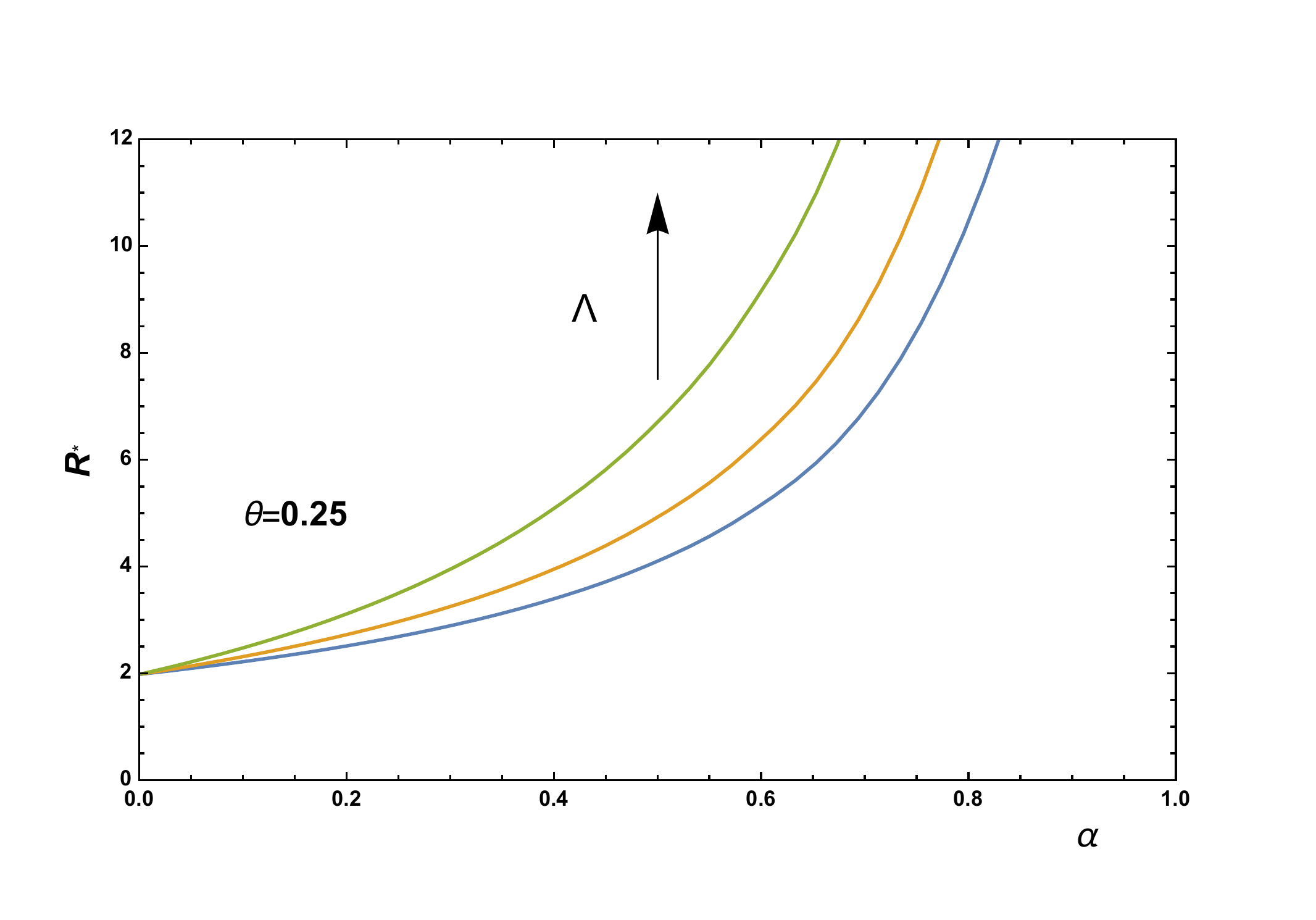} %
\end{minipage}\caption{{Dependency of the velocity of propagation $c^*$ and width of the reaction zone $R^*$ on $\alpha$ for $\Lambda=0.2$ (blue), $\Lambda=1$ (orange) and $\Lambda=5$ (green) for the ignition
 temperature $\theta=0.25$.  The arrow indicates direction of increase of $\Lambda.$}}
\noindent \label{f:t25} 
\end{figure}
\begin{figure}[h]
\hspace{.5cm}
\begin{minipage}[c]{0.4\textwidth}%
\begin{center}
\includegraphics[width=3.2in]{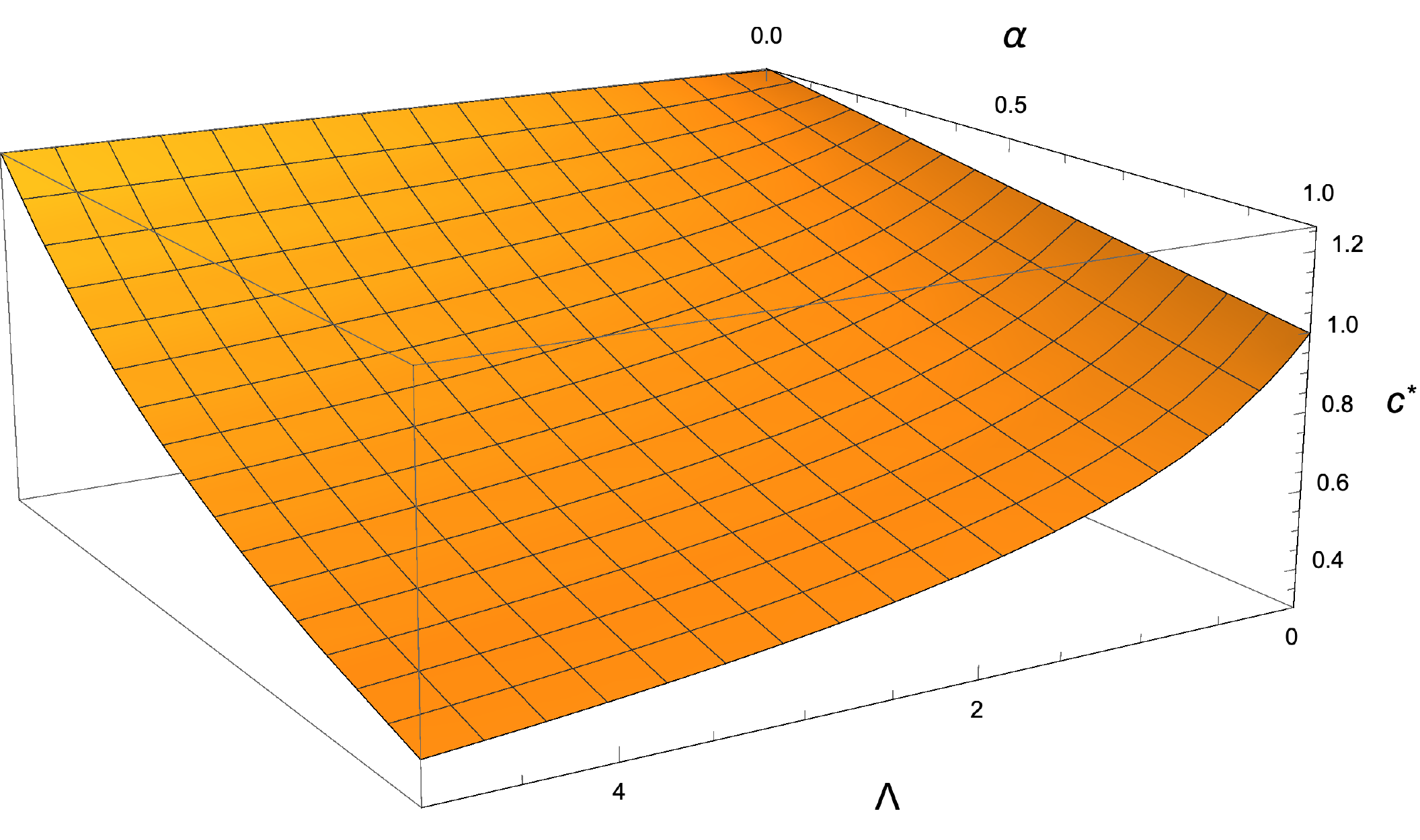}
\par\end{center}%
\end{minipage}\hspace{1.2cm} %
\hspace{1cm}
\begin{minipage}[c]{0.4\textwidth}%
 \includegraphics[width=2.3in]{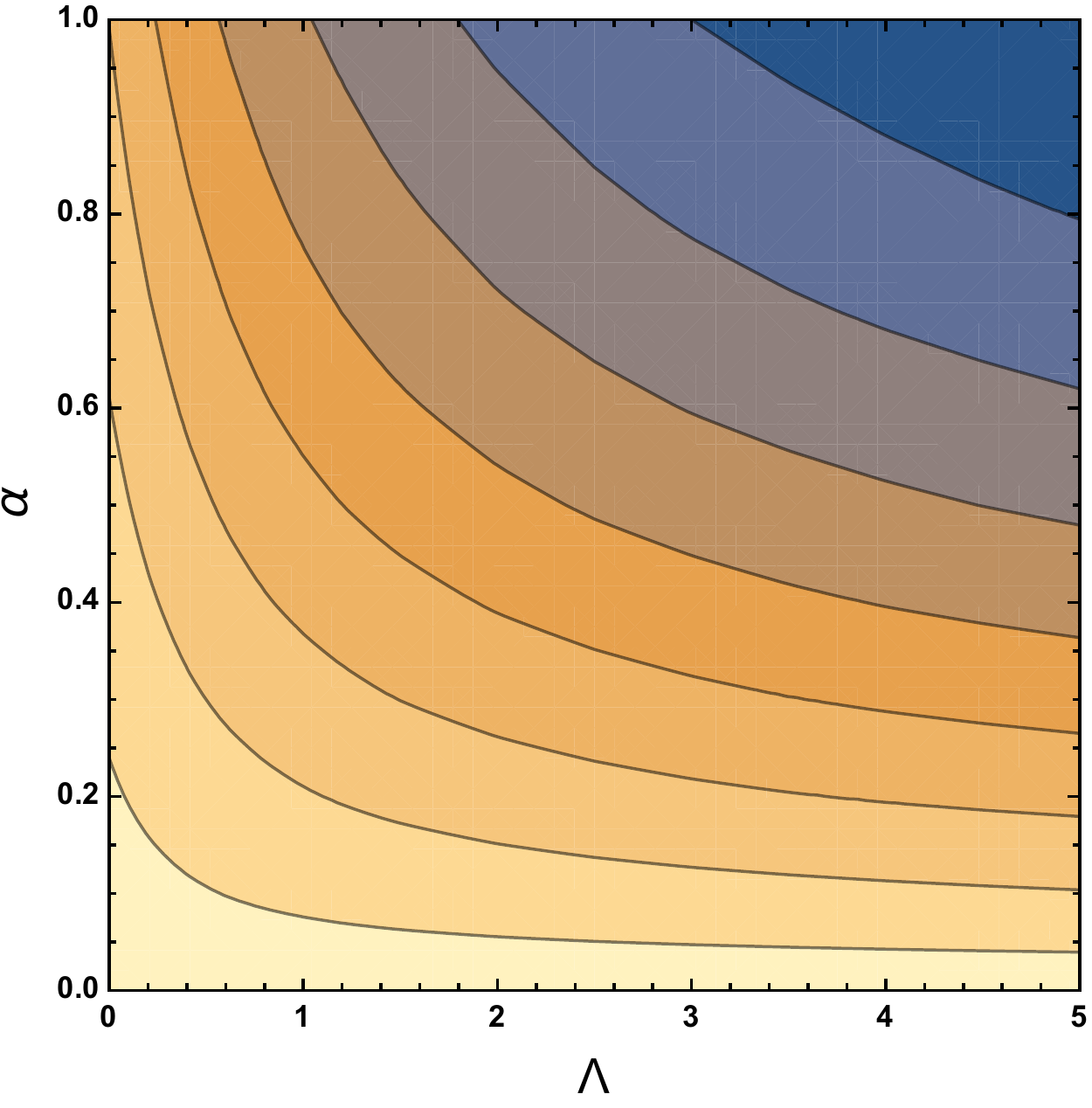} %
\end{minipage}\caption{{ Dependency of the velocity of propagation $c^*$  on $\alpha$ and $\Lambda$ for $\theta=0.5$. Left panel represents
a three dimensional plot of $c^*(\theta=0.5,\Lambda,\alpha)$ and the right plane depicts level sets of this function. 
}}
\noindent \label{f:amd} 
\end{figure}

When $\theta$ becomes sufficiently small, the asymptotic behavior of $\phi(x)$ and $\zeta(x)$ can again be recovered from the asymptotic formula \eqref{eq:21} and reads:
\begin{eqnarray}
\phi(x\vert \Lambda, \alpha)\approx \left(\frac{1}{1-\alpha}\right) \frac{1}{x}, \quad \zeta(x\vert \Lambda, \alpha)= \left(\sqrt{1-\alpha} \right) \sqrt{x} \quad \mbox{for} \quad x\gg 1.
\end{eqnarray}
Therefore, in this regime we have:
\begin{eqnarray}
 c^*(\theta)\approx \frac{1}{\sqrt{\theta}}, \quad R^*(\theta)\approx \frac{1}{ (1-\alpha)} \frac{1}{\sqrt{\theta}}, \quad \theta\ll 1.
\end{eqnarray}
Consequently in this regime,  the velocity of propagation (in the first approximation) depends exclusively on the ignition temperature $\theta,$ and the reaction width is independent of $\Lambda$ and
is an increasing function of $\alpha$.
As in the regime $\theta$ near unity, the velocity of propagation and width of the reaction zone  approaches to formal limits as $\alpha\to 0$ and $\alpha \to 1$.

The discussion above strongly suggests that the velocity of propagation decreases with the increase  of the reaction order.  Consequently, the velocity of propagation with the reaction order $\alpha\in(0,1)$
is bounded from below by the velocity of propagation with the reaction order unity and from above by the velocity of propagation with zero reaction order. This, far from obvious observation, is quite 
in line with the physical intuition as an increase of the reaction order decreases the reaction rate which, in turn, slows the flame front.
Another observation which is less surprising is that the increase of the molecular diffusivity with regard to the thermal diffusivity decreases the speed of the flame front. This is clearly the case for the reactions of first order 
 but remains true for  reaction order $\alpha\in(0,1)$.

We hence formulate the following:
\begin{conjecture} 
The velocity of propagation $c^*(\theta,\Lambda,\alpha)$  is a decreasing  function of all of its arguments whereas the reaction width  $R^*(\theta,\Lambda,\alpha)$ is a decreasing function
of $\theta$ and an increasing function of $\Lambda$ and $\alpha$.
\end{conjecture}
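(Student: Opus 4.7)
The plan is to reduce the conjecture to a monotonicity analysis of the two auxiliary functions $\phi$ and $\zeta$ with respect to the parameters $(\theta,\Lambda,\alpha)$, using the implicit characterization $\phi(\sigma^*|\Lambda,\alpha)=\theta$, $c^*=\zeta(\sigma^*|\Lambda,\alpha)$, $R^*=\sigma^*/c^*$ established in Section 2. The three parameters split into three cases of increasing difficulty, and I would address them separately.

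For the $\theta$ dependence, monotonicity is essentially a corollary of Proposition \ref{p:main}. Since $\phi(\cdot|\Lambda,\alpha)$ is strictly decreasing and continuous with limits $1$ and $0$, its inverse $\sigma^*=\phi^{-1}(\theta)$ is strictly decreasing in $\theta$. Strict monotonicity of $\zeta(\cdot|\Lambda,\alpha)$, which follows from $w>0$ on $(-\infty,0)$, then yields $c^*=\zeta(\sigma^*)$ strictly decreasing in $\theta$. For $R^*=\sigma^*/\zeta(\sigma^*)$ I would verify that $\sigma\mapsto\sigma/\zeta(\sigma)$ is strictly increasing, equivalently that $\zeta(\sigma)/\sigma$ is strictly decreasing. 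This should follow from the sublinear growth of $\int_{-\sigma}^0 w^\alpha\, ds$ in $\sigma$ (since $w^\alpha$ is monotone in $s$), combined with the $(1-\alpha)/2$-power in $\zeta$.

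For the $\Lambda$ dependence, the strategy is to exploit the scaling relation \eqref{eq:21a}. A direct substitution gives $\zeta(x|\Lambda,\alpha)=\sqrt{\Lambda}\,\zeta(x/\Lambda|1,\alpha)$ and
\[
\phi(x|\Lambda,\alpha)=\frac{e^{-x}\int_{-x/\Lambda}^{0} w^\alpha(u|1,\alpha)\,e^{-\Lambda u}\,du}{\int_{-x/\Lambda}^{0} w^\alpha(u|1,\alpha)\,du}.
\]
Fixing $\alpha$ and differentiating the identity $\phi(\sigma^*|\Lambda,\alpha)=\theta$ implicitly in $\Lambda$, monotonicity of $c^*$ and $R^*$ reduces to showing $\partial_\Lambda\phi<0$ at fixed $x$. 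The sign should be accessible via an argument in the spirit of Lemma \ref{l:2}: a larger $\Lambda$ reweights the integrand $e^{-\Lambda u}$ in favor of values of $u$ near $0$ (where $w^\alpha$ is smallest), shifting relative mass in a way that can be captured by a log-concavity or rearrangement estimate, together with the property that $w$ inherits a monotone dependence on $\Lambda$ via \eqref{eq:21a}.

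The $\alpha$ dependence is the main obstacle and likely requires a genuinely new ingredient. Unlike $\Lambda$, no exact scaling reduces the problem to a one-parameter family: $w(\cdot|\Lambda,\alpha)$ depends on $\alpha$ through both the nonlinearity $w^\alpha$ and its global profile, and as Figure \ref{f:3} shows, $\partial_\alpha w$ changes sign across $(-\infty,0)$, so no straightforward comparison principle applies to $w$. My plan would be to bypass the pointwise sign of $\partial_\alpha w$ and work directly with the logarithmic derivatives of
\[
I_0(x,\alpha)=\int_{-x}^{0} w^\alpha(s)\,ds,\qquad I_1(x,\alpha)=\int_{-x}^{0} w^\alpha(s)\,e^{-(s+x)}\,ds,
\]
so that $\partial_\alpha\log\phi(x)$ becomes a difference of expectations of $\log w$ under two probability measures on $(-x,0)$ with densities proportional to $w^\alpha$ and $w^\alpha e^{-(s+x)}$ respectively. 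Since $\log w$ is monotone in $s$ and the ratio of the two densities is monotone as well, an FKG-type inequality should pin down the sign of $\partial_\alpha\phi$, and a parallel argument should control $\partial_\alpha\zeta$. The hard point will be propagating these signs through the implicit equation $\phi(\sigma^*)=\theta$: I expect $\sigma^*$ to grow in $\alpha$, and the delicate step is verifying that this growth is large enough to compensate the opposing monotonicity of $\zeta(\cdot|\Lambda,\alpha)$ in $\alpha$ at fixed argument, so that $c^*=\zeta(\sigma^*)$ ultimately decreases. Carrying this through rigorously will require sharper control on the joint dependence of $w$ on $\alpha$ than the endpoint asymptotics presented in Section 4, and is where I expect the bulk of the technical work to lie.
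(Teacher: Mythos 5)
This statement is labeled a \emph{conjecture} in the paper, and the paper does not prove it: the authors establish rigorously only that $c^*$ is decreasing in $\theta$ (via Proposition \ref{p:main} and the monotonicity of $\zeta$), and support the remaining monotonicities by the asymptotic formulas \eqref{eq:as2} in the regimes $|1-\theta|\ll 1$ and $\theta\ll 1$ together with numerics. Your plan for the $\theta$-dependence of $c^*$ is exactly the paper's argument and is sound; your reduction of the $\theta$-dependence of $R^*$ to the decrease of $\zeta(\sigma)/\sigma$ is a reasonable further step, though your justification is off (the integral $\int_{-\sigma}^0 w^\alpha\,ds$ grows \emph{super}linearly in $\sigma$, since its derivative $w^\alpha(-\sigma)$ is increasing; the desired decrease of $\zeta(\sigma)/\sigma$ would instead have to come from the exponent $(1-\alpha)/2$ and a quantitative lower bound of the form $\int_{-\sigma}^0 w^\alpha\,ds\ge \tfrac{1-\alpha}{2}\,\sigma\, w^\alpha(-\sigma)$, which you do not establish).

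The genuine gaps are in the $\Lambda$ and $\alpha$ parts. For $\Lambda$, your proposed reduction ``show $\partial_\Lambda\phi<0$ at fixed $x$'' is aimed at a statement that is false according to the paper's own computations: Figure \ref{f:pz2} and the surrounding text assert that $\phi(x\vert\Lambda,\alpha)$ is \emph{increasing} in $\Lambda$ at fixed $x$, so $\sigma^*$ increases with $\Lambda$. Consequently
\begin{equation*}
\frac{dc^*}{d\Lambda}=\partial_x\zeta\big(\sigma^*\big)\,\frac{d\sigma^*}{d\Lambda}+\partial_\Lambda\zeta\big(\sigma^*\big)
\end{equation*}
is a sum of a positive term and a negative term, and the whole difficulty is to show the second dominates; the sign of $\partial_\Lambda\phi$ alone cannot settle this. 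The same two-term competition arises for $\alpha$ (where you do acknowledge it), and your FKG-type computation of $\partial_\alpha\log\phi$ as a difference of expectations of $\log w$ captures only the explicit dependence through the exponent in $w^\alpha$, not the implicit dependence of the profile $w(\cdot\vert\Lambda,\alpha)$ on $\alpha$, for which no comparison principle is available (as you note, $\partial_\alpha w$ changes sign). So the proposal does not close either of the two hard cases, and in its current form the $\Lambda$-case is set up to prove an inequality with the wrong sign. If you pursue this, the honest starting point is the one the paper takes: the rigorous content available is the $\theta$-monotonicity of $c^*$, the scaling relation \eqref{eq:21a}, and the endpoint asymptotics \eqref{eq:as2}; everything else remains open.
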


\noindent \textbf{Acknowledgments.}  The work of AM and PVG was supported in a part by US-Israel BSF grant 2020005. PVG would like to thank Fedor Nazarov for multiple valuable discussions and substantial help with proving the main result of this paper.

\section{Appendix}
In this appendix we show that inequality \eqref{eq:a4} is equivalent to \eqref{eq:cl3} and briefly discuss solution of problem \eqref{eq:16} from dynamical systems point of view.

In what follows we denote  the vector field in the right hand side of \eqref{eq:cl2} by ${\bf X}_c$.
The unique critical point of this vector field in $Q$ is the origin $O=(0,0).$ 

System \eqref{eq:cl2} was  thoroughly investigated in \cite[Section 4]{CMB21} using a weak version of Poincar\'e-Bendixson theorem (see \cite{R21}).
The following proposition summarizes the results of Lemmas 4.7 and 4.8 of \cite{CMB21}.

\begin{proposition}\label{p:cl}
There exists a unique  global stable manifold at the origin given by a trajectory of ${\bf X}_c$ in $Q$
converging toward the origin when $t \to 0^-$ such that the orbit defined by this trajectory is the graph of a function ${ p^{\star}}={p}^{\star}({q})$ with  ${q}\in (0,+\infty)$. Moreover, ${ p^{\star}}$ is analytic for ${q}>0$ and extends continuously at $0$ with the value ${p^{\star}}(0)=0$.
\end{proposition}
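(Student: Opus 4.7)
My plan has three stages: construct the stable orbit locally near $O$, extend it backward inside $Q$, and verify the graph and regularity assertions. The recurring obstacle, and the step I expect to be hardest, is that $q^\alpha$ is not Lipschitz at $q=0$, so the standard stable-manifold theorem at a hyperbolic fixed point does not apply and uniqueness of an orbit entering $O$ must be proved by hand. For the local construction I would seek a trajectory of the form $q(t)=A_0(-t)^{\beta}(1+\eps(t))$ with $\beta=2/(1-\alpha)$, the exponent $\beta$ and the constant $A_0$ being fixed by the dominant balance $\Lambda\ddot q\sim q^\alpha$ (the drift $\dot q$ is strictly lower order in $(-t)$ when $\alpha<1$), which gives $A_0^{1-\alpha}=(1-\alpha)^2/[2\Lambda(1+\alpha)]$, in agreement with \eqref{eq:21}. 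Inserting the ansatz into \eqref{eq:cl2} reduces the problem to a linear ODE for $\eps$ perturbed by a contractive nonlinear term in a suitable weighted sup norm on a small interval $(-\delta,0)$, and a Banach fixed-point argument yields a unique $\eps$ with $\eps(t)\to 0$ as $t\to 0^-$.

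To extend this local orbit backward and prove global uniqueness in $Q$ I would use the Lyapunov-type quantity
\begin{equation*}
F(q,p):=\tfrac{\Lambda}{2}p^2-\tfrac{1}{1+\alpha}\,q^{1+\alpha},\qquad \dot F = p(p+q^\alpha) - q^\alpha p = p^2 \ge 0.
\end{equation*}
Along any orbit entering $O$ one has $F\to 0^-$, so the orbit is confined to $\{F\le 0\}\cap Q$, which yields the a priori bound $|p|\le \sqrt{2/[\Lambda(1+\alpha)]}\,q^{(1+\alpha)/2}$ and precludes blow-up. The orbit cannot exit $Q$ through the axis $\{p=0,\,q>0\}$ in backward time, since there $\dot p = q^\alpha/\Lambda >0$ has the wrong orientation for a backward-incoming trajectory; hence $p<0$ strictly on the whole orbit, $\dot q=p\ne 0$, and the orbit is a graph $p=p^{\star}(q)$ by the implicit function theorem, with $q$ ranging over some interval $(0,q_\infty)$. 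A standard trapping argument (if $q_\infty<\infty$ were finite, then $p\to 0$ while $\dot p\to q_\infty^\alpha/\Lambda>0$, a contradiction) then forces $q_\infty=\infty$. Uniqueness of the stable manifold in $Q$ follows by reducing the flow to the scalar ODE $dp/dq=(p+q^\alpha)/(\Lambda p)$ and applying a sub/super-solution comparison with the explicit asymptotic profile near $q=0$ to rule out any second orbit of $Q$ entering $O$.

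The remaining assertions are now immediate. On $\{q>0\}$ the vector field ${\bf X}_c$ is real-analytic and $\dot q=p<0$, so analyticity of $p^{\star}$ follows from the analytic implicit function theorem. The local asymptotic gives $|p|=O(q^{(1+\alpha)/2})$ as $q\to 0^+$, which forces the continuous extension $p^{\star}(0)=0$. One could alternatively obtain existence of a stable orbit via the weak Poincar\'e--Bendixson theorem of \cite{R21} used in \cite{CMB21}, but the uniqueness and the precise graph structure would still rely on the Lyapunov-based analysis sketched above.
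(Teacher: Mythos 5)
Your proposal is essentially correct, but it takes a genuinely different route from the paper: the paper does not prove Proposition \ref{p:cl} at all, it imports it from \cite{CMB21} (Lemmas 4.7 and 4.8), where existence and uniqueness of the orbit entering the degenerate equilibrium $O$ are obtained topologically via a weak Poincar\'e--Bendixson theorem \cite{R21}. You instead give a direct, self-contained ODE argument: a self-similar ansatz with a fixed-point correction for local existence, the Lyapunov function $F=\tfrac{\Lambda}{2}p^2-\tfrac{1}{1+\alpha}q^{1+\alpha}$ with $\dot F=p^2\ge 0$ to confine any $O$-bound orbit to $\{F\le 0\}$, and a comparison argument for the scalar equation $\Lambda P\,dP/dq=P+q^{\alpha}$ for uniqueness and the graph structure. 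The topological route buys existence cheaply, without asymptotics; your route buys the quantitative bound $|p|\le\sqrt{2/(\Lambda(1+\alpha))}\,q^{(1+\alpha)/2}$ and an explicit uniqueness mechanism that \cite{CMB21} obtains differently. The one step you leave schematic --- uniqueness --- does close along the lines you indicate: if $P_1>P_2$ are two negative graph solutions vanishing at $q=0$, their difference $D=P_1-P_2>0$ satisfies $D'=-\bigl(q^{\alpha}/(\Lambda P_1P_2)\bigr)D$, and your Lyapunov bound gives $q^{\alpha}/(\Lambda P_1P_2)\ge (1+\alpha)/(2q)$, which is non-integrable at $0$; hence $D(q)=D(q_0)\exp\bigl(\int_q^{q_0}q^{\alpha}/(\Lambda P_1P_2)\,ds\bigr)$ would diverge as $q\to 0^+$ while both $P_i\to 0$, a contradiction. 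The least detailed part is the local Banach fixed-point construction (one must check the indicial exponents of the linearization about the profile $A_0(-t)^{2/(1-\alpha)}$); if that resists, existence can still be borrowed from \cite{CMB21} as you note, with your Lyapunov and comparison machinery supplying the strict negativity of $p$, the graph property on $(0,\infty)$, the continuous extension $p^{\star}(0)=0$, and uniqueness.
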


The trajectory for system \eqref{eq:cl2} can be obtained numerically.  Figures \ref{f:ap1} and \ref{f:ap3} depict such a trajectory on the phase plane  for $\Lambda=1,$  $\alpha=1/2$ in cartesian and polar coordinates respectively. 
Qualitative behavior of the trajectory is similar for other values of parameters $\Lambda,\alpha$.
\begin{figure}[h]
\centering \includegraphics[width=4.in]{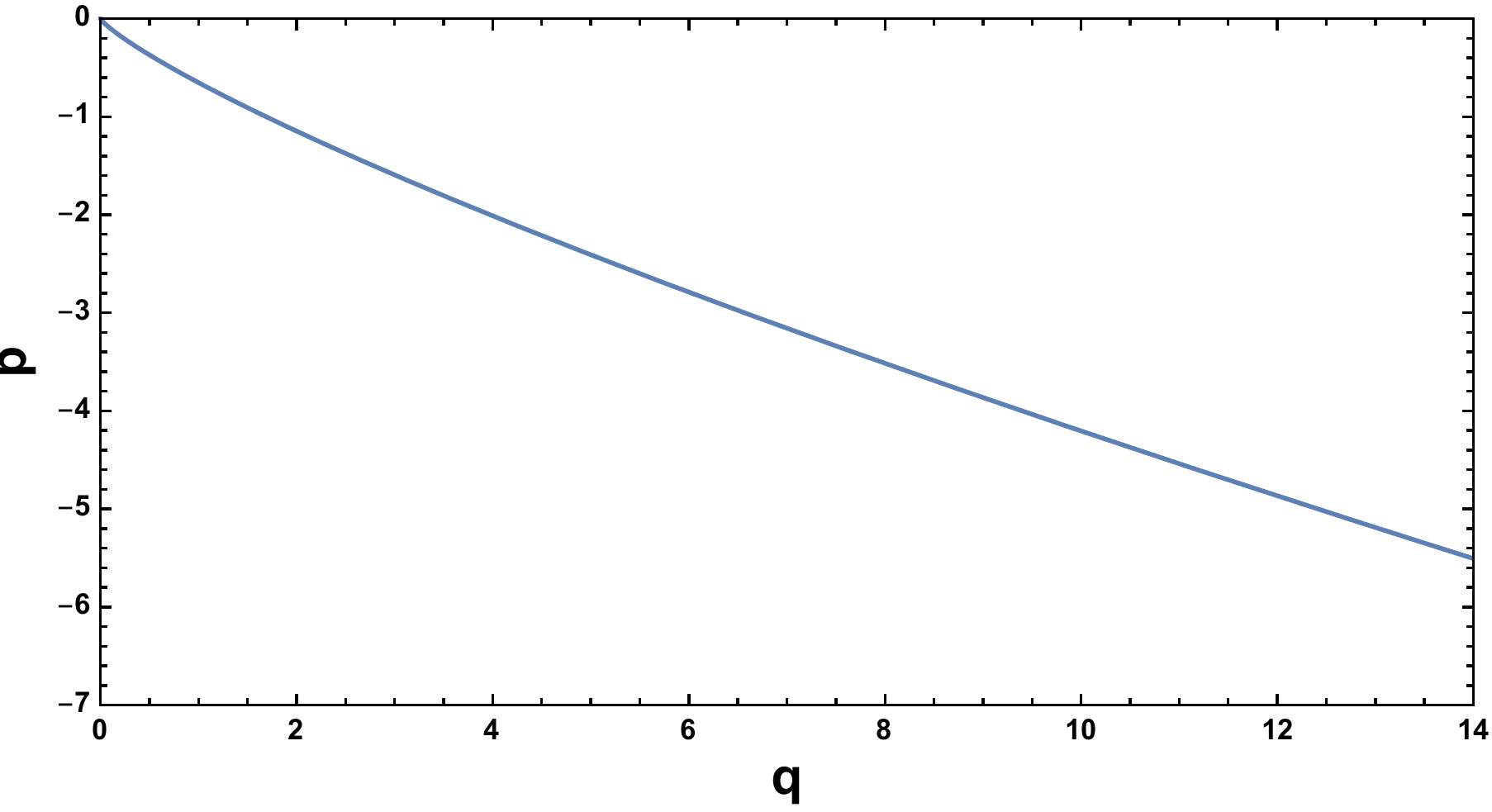}
\caption{Phase portrait for system \eqref{eq:cl2} with $\Lambda=1$ and $\alpha=1/2$ in cartesian coordinates $(q,p).$ }
\label{f:ap1} 
\end{figure}
\begin{figure}[h]
\centering \includegraphics[width=3.in]{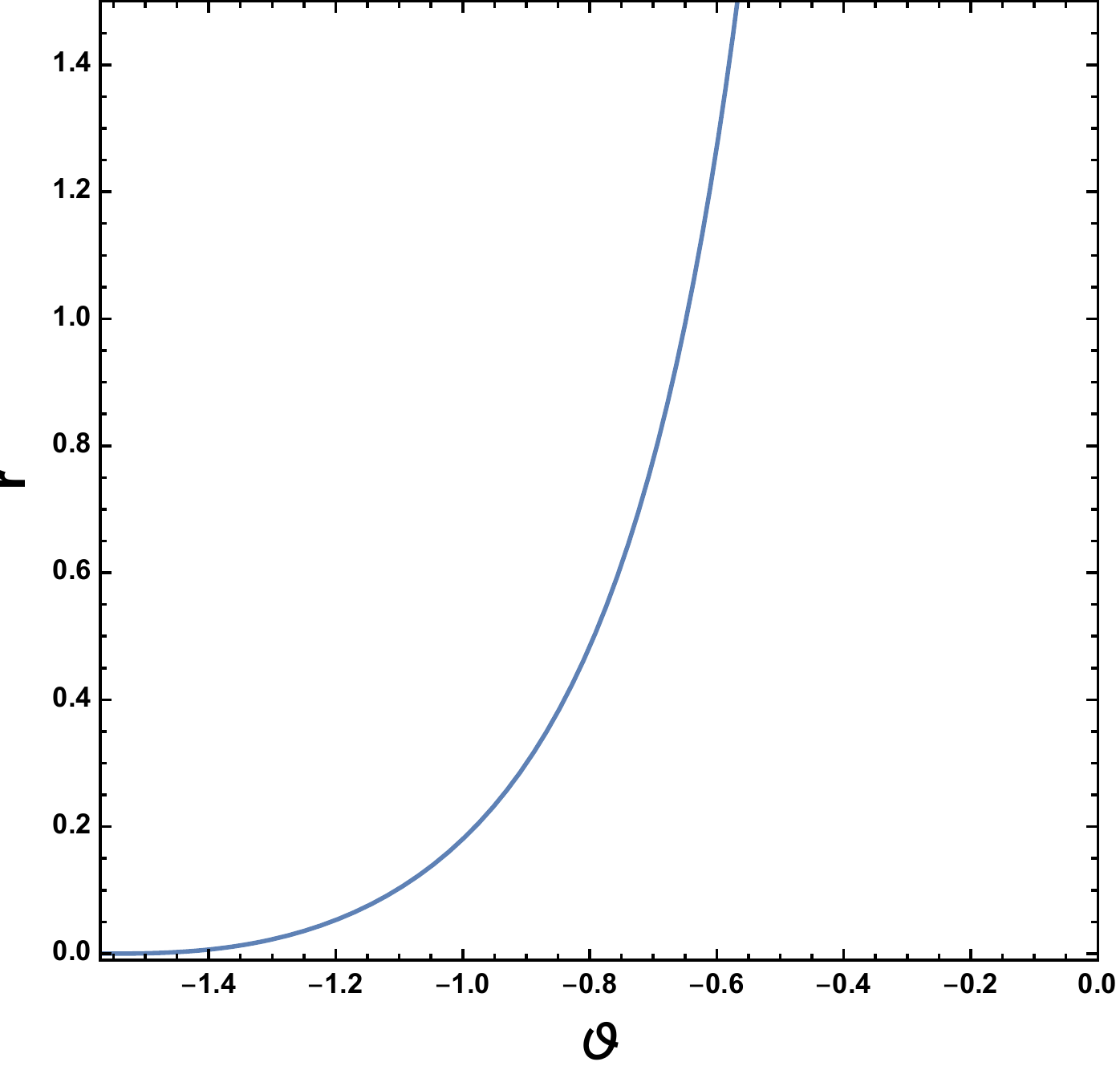}
\caption{Phase portrait for system \eqref{eq:cl2} with $\Lambda=1$ and $\alpha=1/2$ in polar coordinates $(\vartheta,r).$ }
\label{f:ap3} 
\end{figure}

Next observe that
by \eqref{eq:cl3} we have
\begin{eqnarray}
 \displaystyle \tan(\theta(t))=\frac{{p}(t)}{{ q}(t)}.
 \end{eqnarray}
 In view of Proposition \ref{p:cl},  the trajectory $(q,p)(t)$ is smooth and hence (differentiating the expression above)
 we have
\begin{eqnarray} 
\dot{\vartheta} (t)&=&\frac{{ q}(t)\dot{ p}(t)-{ p}(t)\dot{ q}(t)}{r^2(t)}.
\end{eqnarray} 
Since
\begin{eqnarray}
{ q}(t)\dot{ p}(t)-{ p}(t)\dot{ q}(t)=w(x)w''(x)- (w'(x))^2,
\end{eqnarray}
 by \eqref{eq:a4} we obtain \eqref{eq:cl3a}.
 The plot of $\vartheta(t)$ for $\Lambda=1$ and  $\alpha=1/2$ is depicted in Figure \ref{f:ap2}. The function $\vartheta(t)$  is decreasing  on $(-\infty,0)$ and approaches
 $-\pi/2$ as $t\to 0$ and to zero as $t\to -\infty$ regardless of the specific values of parameters $\Lambda,\alpha$ as follows from asymptotic expressions for $w$ (see equation \eqref{eq:21}).
 
\begin{figure}[h]
\centering \includegraphics[width=4.in]{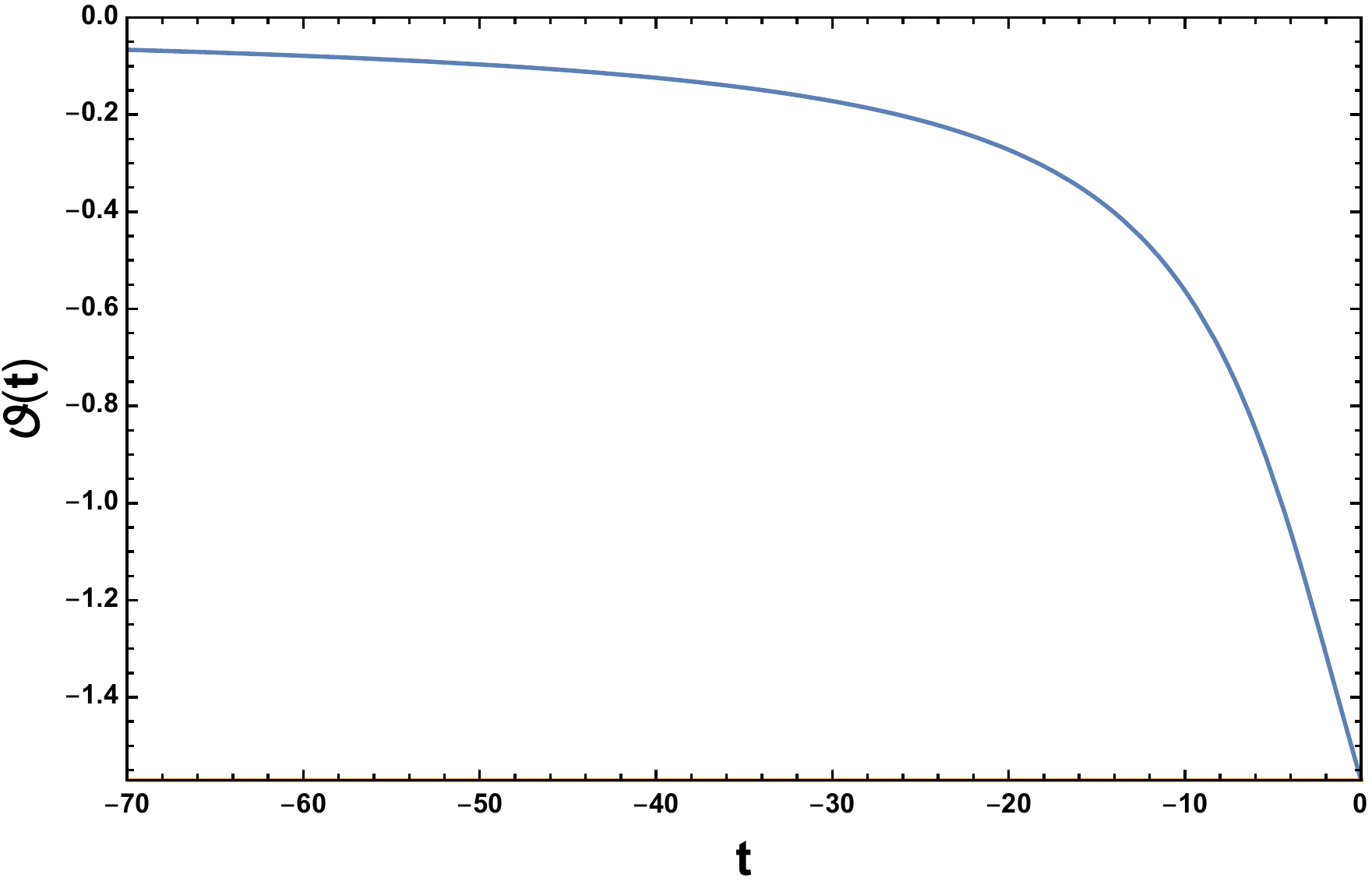}
\caption{Polar angle $\vartheta$ vs. time for the solution of  system \eqref{eq:cl2} with $\Lambda=1$ and $\alpha=1/2.$ }
\label{f:ap2} 
\end{figure}

\bigskip

\noindent {\bf Declaration of Competing Interest:} The authors declare that they have no competing interests.

\bigskip

\noindent {\bf Data availability:}  No data was used for the research described in the article.

\begin{bibdiv} 
\begin{biblist}

\bib{BNS85}{article}{
   author={Berestycki, Henri},
   author={Nicolaenko, Basil},
   author={Scheurer, Bruno},
   title={Traveling wave solutions to combustion models and their singular
   limits},
   journal={SIAM J. Math. Anal.},
   volume={16},
   date={1985},
   number={6},
   pages={1207--1242},
   issn={0036-1410},
   review={\MR{807905}},
   doi={10.1137/0516088},
}

\bib{cnf15}{article}{
author={ Brailovsky, I.},
author={Gordon. P.V.},
author={Kagan, L.},
author={Sivashinsky, G.},
title={Diffusive-thermal instabilities in premixed flames: Stepwise ignition-temperature kinetics},
journal={Combustion and Flame},
volume={162},
date={2015},
pages={2077--2086},
}

\bib{CMB21}{article}{
   author={Brauner, Claude-Michel},
   author={Roussarie, Robert},
   author={Shang, Peipei},
   author={Zhang, Linwan},
   title={Existence of a traveling wave solution in a free interface problem
   with fractional order kinetics},
   journal={J. Differential Equations},
   volume={281},
   date={2021},
   pages={105--147},
}

\bib{FK}{book} { AUTHOR = {Frank-Kamenetskii,D. A. }, TITLE
= {Diffusion and heat transfer in chemical kinetics}, PUBLISHER
= { Plenum Press}, ADDRESS = {New York}, YEAR = {1969},

}

\bib{Kan63}{article}{
   author={Kanel', Ja. I.},
title= {A stationary solution to a system of equations in the theory of burning},
journal={Dokl. Akad. Nauk SSSR},
date={1963},
volume={149},
number={2},
pages={367--369},
}

\bib{GK_book}{book}{
   author={Gilding, Brian H.},
   author={Kersner, Robert},
   title={Travelling waves in nonlinear diffusion-convection reaction},
   series={Progress in Nonlinear Differential Equations and their
   Applications},
   volume={60},
   publisher={Birkh\"{a}user Verlag, Basel},
   date={2004},
}

\bib{Law}{book}{
author={ Law, C. K. },
title={Combustion Physics},
publisher={Cambridge University Press},
address={Cambridge},
year={2010}
}

	\bib{R21}{article}{
	author= {Roussarie, R.},
	title={Some Applications of the Poincar\'e-Bendixson Theorem},
	journal={Qual. Theory Dyn. Syst.}
	volume={20},
	number={64}
	date= {2021},
	doi={10.1007/s12346-021-00498-2}
}

\bib{SW}{article}{
author= {S\'{a}nchez,A. L. },
 author= {Williams,  F.A. },
 title={Recent advances in understanding of flammability characteristics of hydrogen},
 journal={Prog. Energy Combust. Sci.}
 volume={41},
 date= {2014},
 pages={ 1--55}
 }

\bib{V3}{book}{
   author={Volpert, Aizik I.},
   author={Volpert, Vitaly A.},
   author={Volpert, Vladimir A.},
   title={Traveling wave solutions of parabolic systems},
   series={Translations of Mathematical Monographs},
   volume={140},
   publisher={American Mathematical Society, Providence, RI},
   date={1994},
}

\bib{Volpert}{book}{
   author={Volpert, Vitaly},
   title={Elliptic partial differential equations. Vol. 2},
   series={Monographs in Mathematics},
   volume={104},
   note={Reaction-diffusion equations},
   publisher={Birkh\"{a}user/Springer Basel AG, Basel},
   date={2014},
}

\bib{Sem}{article}{
author={Semenov, N.N.},
title={Thermal theory of combustion and explosion},
Journal={Physics-Uspekhi},
Volume={23},
Issue={3},
year={1940},
pages={251-292},
}

\bib{Will}{book} {
AUTHOR = {Williams, F.},
     TITLE = {Combustion theory},
 PUBLISHER = {Perseus Books},
   ADDRESS = {Reading, MA},
      YEAR = {1985},
    }

\bib{Xin_rev}{article}{
   author={Xin, Jack},
   title={Front propagation in heterogeneous media},
   journal={SIAM Rev.},
   volume={42},
   date={2000},
   number={2},
   pages={161--230},
}

\bib{ZBLM}{book}{
   author={Zel\cprime dovich, Ya. B.},
   author={Barenblatt, G. I.},
   author={Librovich, V. B.},
   author={Makhviladze, G. M.},
   title={The mathematical theory of combustion and explosions},
   publisher={Consultants Bureau [Plenum], New York},
   date={1985},
}

\end{biblist}
\end{bibdiv}

\end{document}